\newtheorem{theorem}{Theorem}[section]
\newtheorem{lemma}[theorem]{Lemma}
\newtheorem{corollary}[theorem]{Corollary}
\newtheorem{proposition}[theorem]{Proposition}
\newcommand{\e}{\epsilon}
\newcommand{\g}{\gamma}
\newcommand{\I}{\mbox{Im\,}}
\newcommand{\lbda}{\lambda}
\newcommand{\mb}{\mathbb}
\newcommand{\R}{\mbox{Re\,}}
\newcommand{\w}{\omega}
\renewcommand{\a}{\alpha}
\renewcommand{\l}{\lambda}
\renewcommand{\H}{\mathbb H}
\newcommand{\G}{\Gamma}
\begin{document}      
\title{Regularity of Loewner curves}         
\date{Preprint - \today}          
\author{Joan Lind}\thanks{Lind's work partially supported by NSF Grant DMS-1100714.}
\address{University of Tennessee, Knoxville, TN}
\author{Huy Tran}
\address{University of Washington, Seattle, WA}
\maketitle

\begin{abstract}
The Loewner equation encrypts a growing simple curve in the plane into a real-valued driving function. We show that if the driving function $\lbda$ is in $C^{\beta}$ with $\beta>2$ (or real analytic)  then the Loewner curve is in $C^{\beta + \frac{1}{2}}$ (resp.~analytic). This is a converse of \cite{EE} and extends the result in \cite{C}.
\end{abstract}  

\tableofcontents

\section{Introduction and results}

The Loewner differential equation, a classical tool that has attracted recent attention due to Schramm-Loewner evolution, provides a unique way of encoding a simple 2-dimensional curve  into a continuous 1-dimensional  function.  
In particular, let $\gamma : [0,T] \to \mathbb{C}$ be a simple curve with $\g(0) = 0$ and $\g(0,T) \in \mathbb{H} = \{ x+iy \, : \, y >0\}$.  For each $t\in [0,T]$,  there is a unique conformal map $g_t : \H \setminus \gamma(0,t) \to \H$ with the so-called hydrodynamic normalization: 
\begin{equation}\label{hydro norm}
g_t(z) = z + \frac{a(t)}{z} + O(z^{-2}) \text{ for $z$ near infinity.} 
\end{equation}
  Further, it is possible to reparametrize $\g$ so that $a(t) = 2t$ in equation \eqref{hydro norm}.  In this case, we say that $\g$ is parametrized by halfplane capacity 
  (since $a(t)$ is called the halfplane capacity of $\g[0,T]$ and can be thought of as a measure of the size of $\g[0,T]$.) 
  Unless stated otherwise, we will assume $\g$ has this parametrization throughout the paper.  The Loewner equation describes the time evolution of $g_t$:
  $$\partial_t g_t(z) = \frac{2}{g_t(z) - \lambda(t)},~~~g_0(z)=z,$$
 where $\lambda(t) = g_t(\g(t))$ is a continuous real-valued function, called the driving function. 
 (See  \cite{L} for further details.)

It is natural to ask how properties of the Loewner curve $\g$ correspond to properties of the driving function $\l$.  The results in this paper relate  the regularity of $\l$ to the regularity of $\g$.  Precise definitions of the regularity are given in Section 2.1, but at this point, we remind the reader that the Zygmund space $\Lambda_*^n$ is a generalization of $C^{n+1}$.

\begin{theorem}
\label{t: main theorem 1}
Let $\lbda \in C^\beta [0,T] $ for $\beta > 2$. 
Then the Loewner curve $\g$ is $C^{\beta+\frac{1}{2}} (0,T]$ 
      when $\beta+1/2 \notin \mb{N}$,
and $\g$ is in $\Lambda_*^{\beta - 1/2}(0, T]$ when $\beta+ 1/2 \in \mathbb{N}$.
\end{theorem}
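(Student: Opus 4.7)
The plan is to analyze the Loewner curve $\gamma$ through the reverse Loewner flow at each fixed time horizon $t_0$, use a square-root change of variable to desingularize the tip, and then apply ODE parameter regularity (together with a bootstrap argument) to obtain the claimed $C^{\beta+1/2}$ smoothness on $(0, T]$.

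Concretely, for each $t_0 \in (0, T]$ I would start from the reverse flow
\begin{equation*}
\partial_s h_s(z) = \frac{-2}{h_s(z) - \lambda(t_0 - s)}, \qquad h_0(z) = z, \qquad s \in [0, t_0],
\end{equation*}
which recovers the tip as $\gamma(t_0) = \lim_{y \downarrow 0} h_{t_0}(\lambda(t_0) + iy)$. After the centering $Y_s(y) := h_s(\lambda(t_0) + iy) - \lambda(t_0 - s)$, the function $Y$ satisfies $\partial_s Y = -2/Y + \lambda'(t_0 - s)$ with $Y_0 = iy$, and since $Y \sim 2i\sqrt{s}$ at the tip, the change of variables $s = r^2$, $Y = 2ir X$ produces a regular ODE $(rX)'_r = 1/X - ir\,\lambda'(t_0 - r^2)$ with $X(0) = 1$. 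The crucial observation is that integration in $r$ yields $\int_0^r \rho\,\lambda'(t_0 - \rho^2)\,d\rho = \tfrac{1}{2}\bigl(\lambda(t_0) - \lambda(t_0 - r^2)\bigr)$, so $X(r, t_0)$ depends on $\lambda$ itself rather than on $\lambda'$ and therefore inherits the full $C^\beta$ regularity in $t_0$. This leads to the closed-form representation
\begin{equation*}
\gamma(t_0) = \lambda(t_0) + 2i\int_0^{\sqrt{t_0}} \frac{dr}{X(r, t_0)}.
\end{equation*}

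The proof then obtains the extra half derivative by iterating this representation. Differentiating in $t_0$ combines three contributions: the endpoint derivative at $r = \sqrt{t_0}$, producing a factor $1/(2\sqrt{t_0})$ which is smooth on $(0, T]$; the parameter derivative of $1/X(r, t_0)$ under the integral sign; and the $\lambda(t_0)$ boundary term. Because $X$ satisfies a regular ODE in $r$ with $C^\beta$ coefficients in $t_0$, one gains a full derivative in $r$ and a matching derivative in $t_0$ at each stage. The square-root change of variable effectively halves the time scale near the tip, and this is where the extra $1/2$ derivative enters: combined with the parameter regularity of $X$, one obtains $\gamma \in C^{\beta + 1/2}(0, T]$ after $\lfloor \beta + 1/2 \rfloor$ iterations. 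The real-analytic case follows by showing that the power series of $X$ in $r$ has positive radius of convergence uniform in $t_0$, in the spirit of \cite{EE}.

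The main obstacle is making the $+1/2$ gain rigorous and handling the integer case $\beta + 1/2 \in \mathbb{N}$, where the theorem asserts Zygmund regularity $\Lambda_*^{\beta - 1/2}$ rather than Hölder regularity $C^{\beta + 1/2}$. One must propagate Hölder and Zygmund norms through the iteration while controlling the diagonal evaluation $r = \sqrt{t_0}$, which couples two variables of potentially different regularity. At integer endpoints the natural regularity produced by iterated ODE integration is Zygmund rather than Hölder; verifying this sharpness, together with the uniform bounds on the integrand $1/X(r, t_0)$ along the diagonal, will be the heart of the technical work.
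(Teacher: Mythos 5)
Your setup is, after a change of variables, the paper's central object: your $X(r,t_0)$ equals $f(u,t_0)/(2i\sqrt{u})$ with $u=r^2$, where $f(u,s)=\g(s-u,s)$ solves $\partial_u f=-2/f+\lambda'(s-u)$, $f(0)=0$; your integral representation of $\g(t_0)$ is correct, and the observation that the integrated equation involves $\lambda(t_0)-\lambda(t_0-r^2)$ rather than $\lambda'$ is a genuinely useful way to avoid ever needing $\lambda^{(n+1)}$. The gap is that the mechanism you offer for the $+1/2$ gain does not work as stated. Writing $\beta=n+\alpha$, it is not true that $X$ ``inherits the full $C^{\beta}$ regularity in $t_0$'' uniformly in $r$: the best available bound on $\partial_{t_0}^{n}X$ is of order $r^{2\alpha-1}$, unbounded as $r\to 0$ when $\alpha<1/2$. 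What is actually needed — and what constitutes the bulk of the work — are the quantitative rates $|\partial_{t_0}^{k}f(u,t_0)|\le cu$ for $k\le n-1$, $|\partial_{t_0}^{n}f(u,t_0)|\le cu^{\alpha}$, and two-parameter oscillation bounds such as $|\partial_{t_0}^{n-1}f(u,t_0+\delta)-\partial_{t_0}^{n-1}f(u,t_0)|\le c\min(u\delta^{\alpha},u^{\alpha}\delta)$. These come from repeatedly differentiating the ODE in the parameter and running a Gronwall-type estimate that exploits $|P|\le -C\,\mathrm{Re}\,P$ for $P=2/f^2$, which in turn requires the a priori cone estimate $|\mathrm{Re}\,f|\le \mathrm{Im}\,f/\sqrt{3}$; nothing in your proposal produces them. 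Moreover, even granting joint $C^{\beta}$ regularity of $X$, your representation would only yield $\g\in C^{\beta}$. The half derivative is gained by combining the vanishing rates at $u=0$ with $|f|\asymp\sqrt{u}$ and splitting the integral at $u\sim\delta$, e.g.\ $\int_0^{\delta}u\delta^{\alpha}u^{-3/2}\,du+\int_{\delta}^{s}u^{\alpha}\delta\,u^{-3/2}\,du\lesssim\delta^{\alpha+1/2}$; ``the square-root change of variable halves the time scale'' is not a substitute for this computation.

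The integer case is also not merely a loose end to be checked: when $\alpha=1/2$ the first-difference estimate genuinely fails (the middle range of the split integral produces a $\delta\log(s/\delta)$ term), and one must pass to second differences, control $\partial_{t_0}^{n-1}f(u,t_0+\delta)+\partial_{t_0}^{n-1}f(u,t_0-\delta)-2\partial_{t_0}^{n-1}f(u,t_0)$ by integrating first differences of $\partial_{t_0}^{n}f$, and redo the decomposition for the second difference; this is precisely where $\Lambda_*^{\beta-1/2}$ enters. Two smaller but necessary points: identifying the $y\downarrow 0$ limit of the reverse flow with $\g(t_0)$ requires a uniform contraction estimate ($|\partial_{\e}f|\le 1$), and differentiating under the integral sign up to order $n$ must be justified near $r=0$ exactly because the top $t_0$-derivative of the integrand is singular there.
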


See Theorem \ref{t: quantitative} for the quantitative version of this result.
This theorem  extends the work in \cite{C}, where the result was proven for $\beta \in (1/2, 2] \setminus \{3/2\}$. 
We do not know if the Zygmund space $\Lambda_*^n$ is optimal for the case $\beta=n+1/2$, but we do know that it is not possible to strengthen Theorem  \ref{t: main theorem 1} to say that $\g \in C^{n+1}$ when $\l \in C^{n+1/2}$.  This is illustrated in
 Section \ref{sec:ex}, in which we discuss an example where  $\l \in C^{3/2}$ but $\g$ fails to be $C^2$.

We also  address the analytic case:

\begin{theorem} \label{t:analytic}
If $\lbda$ is real analytic on $[0,T]$, then $\g$ is also real analytic on $(0,T]$. 
\end{theorem}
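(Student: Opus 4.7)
The plan is to derive a regular singular ODE for a rescaled reverse Loewner flow near the tip and apply the analytic Briot--Bouquet theorem with parameters. Fix $t_0 \in (0,T]$ and let $h(s) = g_{t_0-s}(\gamma(t_0))$ for $s \in [0,t_0]$. Since $\gamma$ is simple, $h$ is smooth on $(0,t_0]$, with $h(0) = \lambda(t_0)$ and $h(t_0) = \gamma(t_0)$. A direct computation from the Loewner equation shows that the centered quantity $U_s := h(s) - \lambda(t_0-s)$ satisfies the singular initial value problem
\begin{equation*}
\partial_s U_s = -\frac{2}{U_s} + \dot{\lambda}(t_0-s), \qquad U_0 = 0,
\end{equation*}
and the branch with $U_s \sim 2i\sqrt{s}$ as $s \to 0^+$ is the relevant one (it corresponds to $h(s) \in \H$). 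In particular $\gamma(t_0) = \lambda(0) + U_{t_0}$.

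Next, I would substitute $u = \sqrt{s}$ and $V(u;t_0) = U_{u^2}/u$, transforming the equation into
\begin{equation*}
uV' = -\frac{V^2+4}{V} + 2u\,\dot{\lambda}(t_0 - u^2), \qquad V(0) = 2i.
\end{equation*}
Since $\lambda$ extends holomorphically to a complex strip around $[0,T]$, the right-hand side is holomorphic in $(u,V,t_0)$ in a neighborhood of $(0,2i,t_0^*)$ for each $t_0^* \in (0,T]$. It vanishes at $(u,V)=(0,2i)$, and its derivative with respect to $V$ there is $-1+4/(2i)^2 = -2 \notin \mathbb{Z}_{>0}$. The analytic Briot--Bouquet theorem with parameters then yields a unique holomorphic solution $V = V(u;t_0)$ in a complex neighborhood of $(0,t_0^*)$, and standard analytic ODE continuation propagates $V$ to $u$ in a complex neighborhood of $[0,\sqrt{t_0^*}]$ while preserving analytic dependence on $t_0$. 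Substituting back, the formula
\begin{equation*}
\gamma(t_0) = \lambda(0) + \sqrt{t_0}\, V(\sqrt{t_0}; t_0)
\end{equation*}
exhibits $\gamma$ as a real analytic function of $t_0$ on $(0,T]$, since $\sqrt{t_0}$ is real analytic there.

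The principal obstacle is the Briot--Bouquet step: one must verify the indicial condition at $u=0$ (done above), propagate analytic dependence on the parameter $t_0$ through the ODE continuation, and keep $V$ bounded away from $0$ along the real trajectory so that the continuation is unobstructed all the way to $u = \sqrt{t_0}$. The exclusion of $t=0$ from the theorem's conclusion is mirrored by the branch point of $\sqrt{\cdot}$ in the formula above, encoding the universal $\gamma(t) \sim 2i\sqrt{t}$ behavior of Loewner curves at their starting point.
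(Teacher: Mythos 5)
Your argument is essentially correct, and it reaches the conclusion by a genuinely different mechanism than the paper, even though the underlying object is the same: your $U_s = g_{t_0-s}(\gamma(t_0)) - \lambda(t_0-s)$ is exactly the quantity $\gamma(t_0-s,t_0)=f(s,t_0)$ studied throughout the paper, satisfying the same singular ODE \eqref{e: g(s-u,s)}. The paper handles the singularity at $s=0$ by regularizing the initial condition to $i\epsilon$, proving (Lemmas \ref{l: complex} and \ref{l: extension}) via a priori bounds and Picard iteration that the solutions $f(u,s,\epsilon)$ extend holomorphically to a complex neighborhood that is \emph{uniform in $\epsilon$}, and then passing to the uniform limit $\epsilon\to 0^+$. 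You instead desingularize by the substitution $u=\sqrt{s}$, $V=U_{u^2}/u$, which converts the problem into a Briot--Bouquet equation with indicial exponent $-2$ at $u=0$, and then invoke the parametric Briot--Bouquet theorem together with regular analytic continuation for $u>0$. Your route is shorter and buys more: the representation $\gamma(t_0)=\lambda(0)+\sqrt{t_0}\,V(\sqrt{t_0};t_0)$ with $V$ holomorphic at $u=0$ simultaneously yields the convergent expansion in powers of $\sqrt{t}$ at the base, i.e.\ the analytic case of Theorems \ref{t:series at 0} and \ref{t: main theorem 2}, which the paper obtains separately in Section \ref{sec:behavior at 0}. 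The paper's route is more elementary, and its $\epsilon$-regularization is the engine for the finite-smoothness results elsewhere.

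One step in your plan must be made explicit, because it is where the whole identification happens: Briot--Bouquet produces the unique \emph{holomorphic} solution with $V(0)=2i$, but you still have to show that this is the actual trajectory $V(u)=U_{u^2}/u$, which a priori is only smooth for $u>0$. Two ingredients are needed. First, $U_s = 2i\sqrt{s}\,(1+O(\sqrt{s}))$ as $s\to 0^+$; this follows from $\partial_s(U_s^2) = -4 + 2U_s\lambda'(t_0-s)$ once one has the a priori bounds $\sqrt{3s}\leq \operatorname{Im}U_s$ and $|U_s|\leq C\sqrt{s}$, which are exactly Lemma \ref{l: upward ODE}(ii) and require the usual reduction to subintervals on which $\|\lambda\|_{C^{1/2}}\leq 1$. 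Second, because the indicial exponent $-2$ has negative real part, any solution remaining bounded as $u\to 0^+$ must coincide with the holomorphic one (otherwise the difference grows like $u^{-2+o(1)}$); uniqueness in the Briot--Bouquet theorem is only asserted within the holomorphic class, so this comparison is not automatic. With these two points supplied, the rest of your continuation goes through: $\operatorname{Im}U_s>0$ for $s>0$ since the curve is simple, so $V$ stays away from the pole of the right-hand side along the entire real trajectory, and joint holomorphy in $(u,t_0)$ persists under continuation.
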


Notice that in both of these theorems, the regularity of $\g$ is on the  time interval $(0,T]$.
With the halfplane-capacity parametrization, it is not possible to extend these results to $t=0$.  
To see this, consider the  example when
the driving function is $\lambda(t) \equiv 0$.  Then the corresponding  Loewner curve is $\g(t) = 2i\sqrt{t}$.
Further, with the halfplane-capacity parametrization,  $\g(t)$ can always be expanded at $t=0$ in powers of $\sqrt{t}$, as we see in the following theorem.

\begin{theorem} \label{t:series at 0}
Assume that $\l \in C^{n+\alpha} [0,T]$ for $n \in \mathbb{N}$ and $\alpha \in (0, 1]$.  Then near  $t=0$, \begin{equation*}
\g(t) =  
\begin{cases}
 2i\sqrt{t} + a_2 t + i \,a_3 t^{3/2} + a_4 t^2 + \cdots + a_{2n} t^{n} + O(t^{n+\a})
	&\mbox{if   } \a \leq 1/2 \\ 
 2i\sqrt{t} + a_2 t + i \,a_3 t^{3/2} + a_4 t^2 + \cdots +  a_{2n} t^{n} +i \,a_{2n+1} t^{n+1/2}   + O(t^{n+\a})
 &\mbox{if   } \a > 1/2 \\ 
 \end{cases}
\end{equation*}
where the real-valued coefficients $a_m$ depend on 
$\l^{(k)}(0)$ for $k=1, \cdots, \lfloor \frac{m}{2} \rfloor$.
\end{theorem}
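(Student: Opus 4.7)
Plan: The plan is to realize $\g(t)$ via the reverse Loewner flow as the endpoint of a singular ODE, match a fractional-power-series ansatz term by term, and control the remainder by a singular Gr\"onwall-type estimate on the squared variable. Since the paper's setup gives $\g(0)=0$, we have $\l(0)=0$ throughout, so no constant term appears in the expansion.

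\textit{Reduction and formal expansion.} For each small $t>0$, let $w\colon[0,t]\to\overline{\H}$ solve $w'(s)=-2/(w(s)-\l(t-s))$ with $w(0)=\l(t)$; differentiating $s\mapsto g_{t-s}(\g(t))$ shows $w(t)=\g(t)$. Setting $u(s)=w(s)-\l(t-s)$ gives
\[
u'(s)=-\frac{2}{u(s)}+\l'(t-s),\qquad u(0)=0,\qquad u(t)=\g(t).
\]
I would substitute the ansatz $u(s)=\sum_{k\ge 1}b_k(t)\,s^{k/2}$ with $b_1=2i$ and Taylor expand $\l'(t-s)=\sum_{j}\tfrac{(-1)^j}{j!}\l^{(j+1)}(t)\,s^j$ to the order permitted by $\l\in C^{n+\a}$. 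Matching powers of $s^{(k-2)/2}$ produces an explicit triangular recursion expressing each $b_k(t)$ as a polynomial in $\l'(t),\dots,\l^{(\lfloor k/2\rfloor)}(t)$, and the recursion preserves the parity $b_{2j}(t)\in\mathbb{R}$, $b_{2j+1}(t)\in i\mathbb{R}$ inherited from $b_1=2i$. Evaluating at $s=t$ and then Taylor expanding each $\l^{(j)}(t)$ about $0$ collects contributions into the desired $a_m$; a power-counting check $\lfloor k/2\rfloor+(m-k)/2=\lfloor m/2\rfloor$ (with $m-k$ even, enforced by parity) confirms that $a_m$ depends on $\l^{(k)}(0)$ only for $k\le\lfloor m/2\rfloor$, and the truncation index ($N=2n$ if $\a\le 1/2$, or $N=2n+1$ if $\a>1/2$) matches the two cases of the theorem.

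\textit{Remainder estimate.} Let $u_N$ be the truncated ansatz and $\rho=u-u_N$. Subtracting the ODEs gives the linear equation
\[
\rho'(s)=\frac{2}{u(s)\,u_N(s)}\,\rho(s)+F(s),
\]
whose forcing satisfies $|F(s)|\le C(s^{n-1+\a}+s^{(N-1)/2})$: the first term is the $C^{n-1+\a}$ Taylor remainder of $\l'$, the second is the intrinsic formal mismatch of the truncated ansatz. Since $u(s),u_N(s)\sim 2i\sqrt{s}$, the coefficient behaves like $-1/(2s)$ and the integrating factor is $s^{1/2}$, so $\rho(s)\lesssim s^{-1/2}\int_0^s r^{1/2}|F(r)|\,dr=O(s^{n+\a})$ in both regimes; evaluating at $s=t$ yields $\g(t)-u_N(t)=O(t^{n+\a})$, which is the claimed expansion.

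\textit{Main obstacle.} The essential technical difficulty is the remainder step. The $1/u$ singularity of the ODE at $s=0$ rules out textbook Gr\"onwall estimates, and the interplay between the formal truncation error and the $C^{n+\a}$ Taylor remainder of $\l$ must be tracked simultaneously through the singular integrating factor. Working with the squared variable $U=u^2$, whose ODE $U'=-4+2\sqrt{U}\,\l'(t-s)$ is regular at $s=0$ once the correct branch is fixed, together with the a priori asymptotic $u(s)\sim 2i\sqrt{s}$ (which I would first establish under the weakest hypothesis $\l\in C^\a$), is the device that makes the singular Gr\"onwall argument work; verifying that the sharp exponent $n+\a$ (rather than any $n+\a'$ with $\a'<\a$) survives the boundary-layer analysis at $s=0$ is the delicate part.
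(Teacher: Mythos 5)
Your plan is sound, and it takes a genuinely different route from the paper. The paper builds an explicit comparison curve $\tilde \g(t)=\phi(2i\sqrt{t})$ with $\phi(z)=z+\sum_m b_m z^m/2^m$, computes the driving function $\tilde\l$ of $\tilde\g$ through the Loewner PDE for $\phi_t=\tilde g_t\circ\phi\circ g_t^{-1}$ (Lemma \ref{polylem}), chooses the $b_m$ recursively so that $\tilde\l^{(k)}(0)=\l^{(k)}(0)$ for $k\le n$ and $t=s+O(s^{2n+2})$, and then imports the stability estimate of \cite[Theorem 3.3]{C} to conclude $|\g(s)-\tilde\g(s)|=O(s^{n+\a})$. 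You instead expand the tip $u(s)=f(s,t)=\g(t-s,t)$ of the reverse flow as a Puiseux series in $\sqrt{s}$, match coefficients in the singular ODE $u'=-2/u+\l'(t-s)$, and close with a singular linear estimate that is exactly the mechanism of Lemmas \ref{l: ODE fact 1}--\ref{l: ODE fact 2}: with $P=2/(u\,u_N)$ one has $|P|\le -C\,\R P$ by Lemma \ref{l: upward ODE}(ii), so $|\rho(s)|\le\int_0^s|F|$, and your exponent count does close ($s^{(N+1)/2}$ versus $s^{n+\a}$ forces $N=2n$ for $\a\le 1/2$ and $N=2n+1$ for $\a>1/2$, matching the two cases of the theorem); your recursion also reproduces the paper's $a_2=\tfrac23\l'(0)$, $a_3=-\tfrac1{18}\l'(0)^2$. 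What your route buys is self-containedness: no conformal welding and no appeal to the external stability theorem. What the paper's route buys is Proposition \ref{prop} in full: the comparison curve is itself a Loewner trace with $C^\infty$ driving function and $\tilde\G(s)=\tilde\g(s^2)$ smooth at $s=0$, which the authors reuse to identify $\G^{(k)}(0)$. Two points you must make explicit when writing this up: (i) the initial condition $u(0)=0$ is singular, so $u$ should be realized as the $\e\to 0^+$ limit of the regularized solutions of Lemma \ref{l: upward ODE}, whose a priori bounds $\sqrt{3s}\le\I u\le 2\sqrt s$, $|\R u|\le\sqrt s$ are what make $\R\bigl(2/(u\,u_N)\bigr)<0$ and hence the Gr\"onwall step legitimate; and (ii) the variation-of-constants formula for $\rho$ must be anchored at $s_0>0$ and passed to the limit, using $\rho(s_0)=O(\sqrt{s_0})$ to kill the homogeneous term of size $(s_0/s)^{1/2}|\rho(s_0)|$. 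The double expansion (coefficients $b_k(t)$ anchored at $t$, then re-Taylored at $0$) is correct --- the count $j+l\le\lfloor k/2\rfloor+l=\lfloor (k+2l)/2\rfloor$ gives the stated dependence of $a_m$ on $\l^{(k)}(0)$ for $k\le\lfloor m/2\rfloor$ --- though you could avoid it by Taylor-expanding $\l'$ about $0$ from the outset.
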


As Theorem \ref{t:series at 0} suggests, if we make the simple change of parametrization $t=s^2$, then the smoothness extends to $s=0$.

\begin{theorem}
\label{t: main theorem 2}
Let $\Gamma(s) = \g(s^2)$ be the reparametrized Loewner curve with driving function $\l$.
If $\l$ is real analytic on $[0,T]$, then $\G$ is real analytic on $[0,\sqrt{T}]$.
If $\lbda \in C^\beta [0,T] $, then $\G \in C^{\beta + 1/2}[0, \sqrt{T}]$ when $\beta + 1/2 \notin \mathbb{N}$.
 \end{theorem}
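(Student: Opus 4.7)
The plan is to derive Theorem 1.4 from Theorems 1.1--1.3. The substitution $t = s^2$ plays two roles: on $(0, \sqrt{T}]$ it is an analytic diffeomorphism, so regularity of $\g$ transfers to $\G$ immediately; at $s = 0$ it converts the half-integer Puiseux expansion of $\g(t)$ furnished by Theorem 1.3 into an ordinary Taylor expansion of $\G(s)$ in integer powers of $s$.

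For the interior behavior, Theorem 1.1 (respectively Theorem 1.2) gives $\g \in C^{\beta + 1/2}(0, T]$ (resp.\ real analytic on $(0, T]$); composing with the analytic map $s \mapsto s^2$ preserves both classes, so $\G$ has the desired regularity on $(0, \sqrt{T}]$, with derivative bounds on any $[\delta, \sqrt{T}]$. For the endpoint $s = 0$, write $\beta = n + \alpha$ with $n \in \mathbb{N}$, $\alpha \in (0, 1]$. Substituting $t = s^2$ in Theorem 1.3 turns each half-integer power $(\sqrt{t})^{\,j}$ into $s^{j}$ and gives
$$
\G(s) = P(s) + O(s^{2\beta}) \quad \text{as } s \to 0^+,
$$
with $P$ a polynomial in $s$ of degree at most $2n + 1$. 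Writing $\beta + 1/2 = m + \gamma$ with $m \in \mathbb{N}_0$ and $\gamma \in (0, 1)$, the coefficients of $P$ supply the candidate values of $\G^{(k)}(0)$ for $0 \le k \le m$.

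The main obstacle is the endpoint regularity itself: to conclude $\G \in C^{\beta + 1/2}[0, \sqrt{T}]$, the pointwise remainder bound $R(s) := \G(s) - P(s) = O(s^{2\beta})$ must be upgraded to matching estimates on $R^{(k)}$ for $k \le m$, since a bare $C^0$ bound does not control derivatives. I would close this gap by applying the quantitative form of Theorem 1.3 not only to $\g$ but to its $t$-derivatives $\partial_t^k \g$, obtained by differentiating the Loewner equation $\partial_t g_t = 2/(g_t - \l)$ and iterating the argument underlying Theorem 1.3. Translating through the chain rule yields uniform bounds on $\G^{(k)}(s)$ for $s$ near $0$ with the correct leading asymptotics, and these combine with the interior estimates from Theorem 1.1 to give the $\gamma$-Hölder modulus of $\G^{(m)}$ on the full closed interval $[0, \sqrt{T}]$.

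For the real analytic case, the extra ingredient is \emph{convergence} of the formal $\sqrt{t}$-series for $\g$ at $t = 0$, not merely its asymptotic validity. I would establish this by complexifying: since $\l$ is real analytic, the Loewner ODE admits a holomorphic extension in both $t$ and $z$ to a complex neighborhood, and Cauchy-type bounds on the resulting holomorphic family $g_t(z)$ provide geometric decay of the coefficients $a_m$ appearing in Theorem 1.3. The substituted series $\G(s) = 2is + a_2 s^2 + i\,a_3 s^3 + \cdots$ then has positive radius of convergence, and combined with analyticity on $(0, \sqrt{T}]$ from Theorem 1.2 this promotes $\G$ to a real analytic function on $[0, \sqrt{T}]$.
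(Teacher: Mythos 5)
Your overall route --- deducing the endpoint regularity of $\Gamma$ from the Puiseux expansion of $\gamma$ at $t=0$ (Theorem \ref{t:series at 0}) --- is genuinely different from the paper's, which never uses that expansion. The paper instead extends $\lambda$ backward so that the base of $\gamma$ becomes an interior point $t_0$, unfolds the slit with a square-root map $\phi(z)=\sqrt{z-\gamma(t_0)}$, observes that $\phi\circ g_{t_0}^{-1}$ is a conformal map from a half-disk onto a domain whose boundary arc $\phi(\gamma((t_0-\epsilon,t_0]))$ is $C^{n,\alpha}$ (resp.\ analytic) by the already-proved interior regularity, and then invokes the Kellogg--Warschawski theorem (resp.\ Schwarz reflection) to get smoothness of $g_{t_0}(\gamma(t_0+s^2))$ at $s=0$. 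That argument needs only the interior statements of Theorems \ref{t: main theorem 1} and \ref{t:analytic} plus one elementary computation (that $H(s)=(\gamma(t_0+s)-\gamma(t_0))/s$ is $C^{n-1,\alpha}$ and nonvanishing), and it is no accident that in the paper Theorem \ref{t: main theorem 2} is proved \emph{before} Theorem \ref{t:series at 0}.

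The gap in your proposal is the step you yourself flag and then dispose of in one sentence: upgrading $\Gamma(s)-P(s)=O(s^{2\beta})$ to matching bounds on the derivatives $\Gamma^{(k)}$, $k\le m$. Theorem \ref{t:series at 0} is proved in the paper by building a comparison curve $\tilde\gamma$ and applying a sup-norm stability estimate (Theorem 3.3 of \cite{C}) to the two driving functions; this machinery produces only a $C^0$ bound on $\gamma-\tilde\gamma$, so there is no ``quantitative form of Theorem \ref{t:series at 0} for $\partial_t^k\gamma$'' available to apply. Differentiating the Loewner equation does not help directly either: $\partial_t^k\gamma$ is not a Loewner curve, and the blow-up of $\gamma^{(k)}(t)$ like $t^{1/2-k}$ as $t\to 0^+$ means the uniform interior bounds of Theorem \ref{t: quantitative} degenerate exactly where you need them. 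Making your plan work would essentially require redoing the analysis of Sections \ref{Properties}--\ref{smoothness section} with weighted estimates near $u=s$, which is a substantial piece of mathematics, not a corollary. The analytic case has the same problem in sharper form: you need geometric decay of the coefficients $a_m$, but the comparison map $\phi$ in Proposition \ref{prop} is a degree-$(4n+1)$ polynomial whose coefficients are defined by an $n$-dependent recursion, and no uniform-in-$n$ control is established anywhere; asserting that ``Cauchy-type bounds'' on a complexified flow yield convergence of the $\sqrt{t}$-series is a plan for a different (and harder) proof, not a proof. I would recommend abandoning the expansion-based route and localizing at the base via the square-root unfolding, where boundary regularity of conformal maps does the work for you.
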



\begin{figure}
\begin{tikzpicture}

\draw[thick] (0,0) to [out=145,in=220] (0.5,2) to [out=40,in=270] (1,2.8) 
to [out=90,in=300] (0.6,4);
\draw[fill] (0.5,2) circle [radius=0.05];
\node[right] at (0.5,2) {$\gamma(s)$};
\draw[fill] (1,2.8) circle [radius=0.04];	
\node[right] at (1,2.8) {$\gamma(s+u)$};
\draw (-3,0) -- (3,0);

\draw[->] (3,2) to [out=45,in=135] (5,2);
\node[above] at (4,2.5) {$g_s-\lambda(s)$};

\draw[fill] (0,0) circle [radius=0.05];
\node[below] at (0,0) {$0$};

\draw[fill] (8,0) circle [radius=0.05];
\node[below] at (8,0) {$0$};

\draw[thick] (8,0) to [out=70, in=290] (8.3,1.5) to [out=110, in=290] (7.8,2.5);
\draw[fill] (8.3,1.5) circle [radius=0.04];
\node[right] at (8.3,1.5) {$\gamma(s,s+u)$};
\draw (5,0) -- (11,0);

\end{tikzpicture}
\caption{The curve $\gamma(s,s+u) = g_s(\g(s + u)) - \lbda( s )$. 
}\label{gamma(s,s+u) figure}
\end{figure}

We wish to briefly describe the key tool used in this paper.
For $s\in [0,T]$, consider the simple curve $g_s(\g(s + u)) - \lbda( s )$,  which we denote by $\g( s, s + u), 0 \leq u\leq T - s $.  This is illustrated in Figure \ref{gamma(s,s+u) figure}. 
We are following the notation introduced in  \cite{C}, and to avoid confusion, we wish to point out that $\gamma(s, s+u)$ is {\it not} the image of the time interval $(s, s+u)$ under $\gamma$.
Rather, for fixed $s$ the curve $\g( s, s + u)$ corresponds to the time-shifted driving function $\lbda_s(u)= \lbda(u + s ) - \lbda(s)$, $0\leq u\leq T - s$. 
 It follows from \cite[Theorem 6.2]{C} that under the assumption $\lambda\in C^2 [0,T]$, the curve $\g$ is in $C^2$ and
 \begin{equation} \label{e: gamma''}
\g''(s)=\frac{2\g'(s)}{\g(s)^2} - 4\g'(s)\int^s_0 \frac{\partial_s [\g(s-u,s)]}{\g(s-u,s)^3}du.
\end{equation}
In order to understand the higher differentiability of $\g$, we need to understand $\g(s-u,s)$. Differentiating this function with respect to $u$, we obtain
\begin{equation} 
\label{e: g(s-u,s)}
\partial_u [\g(s-u,s)] = \partial_u [g_{s-u} ( \g(s)) - \lbda(s - u)] = \frac{-2}{\g(s - u, s)} + \lbda'(s - u) \mbox{ for } 0<u \leq s, 
\end{equation}
and $\g(s-u, s) |_{u=0} = \g(s,s) = 0$. We note that the above differential equation does not hold for $u=0$. This is the reason for us to investigate the following ODE:
\begin{eqnarray}
f'(u)&=&\frac{-2}{f(u)}+\lbda'(s-u),~~~~0\leq u\leq s, \label{e: ODE}\\
f(0) & = & i \e \in \mb{H}. \nonumber
\end{eqnarray}
The work in this paper depends on a deep understanding of the function $f(u) = f(u, s, \epsilon)$ which is the solution to \eqref{e: ODE}.  Once we show that $f(u, s, \e)$ converges uniformly to $\g(s-u, s)$ as $\e \to 0^+$ (see Lemma \ref{l: upward ODE}), we can use \eqref{e: gamma''} to translate information about $f$ into information about the derivatives of $\g$.

The paper is organized as follows:  
Section \ref{prelim section} includes initial properties of $f(u, s, \epsilon)$ 
    and some lemmas regarding solutions to a particular class of  ODEs.  
These lemmas will be useful in analyzing $f$ and its partial derivatives, and this is the content of Section \ref{Properties}.  
In Section \ref{smoothness section}, we state and prove a quantitative version of Theorem \ref{t: main theorem 1}.  
The real analyticity of the curve $\g$ in Theorem \ref{t:analytic} is proved in Section \ref{sec: analytic}.
In Section \ref{sec:behavior at 0}, we analyze the behavior of the trace at its base, proving Theorem \ref{t: main theorem 2} and Theorem \ref{t:series at 0}.  The latter is proven  by constructing
a nice curve that well-approximates a given Loewner curve at its base.
We conclude in Section \ref{sec:ex} with two examples.

\noindent {\bf Remark.}  Theorem \ref{t: main theorem 1} and Theorem \ref{t:analytic} provide a converse to the results of Earle and Epstein in \cite{EE}.  
Their results (translated from the radial setting to the chordal setting using \cite{M}) state that 
if any parametrization of $\g$ is $C^n$, then  the halfplane-capacity parametrization of $\g$ is in $C^{n-1}(0,T)$ and $\l \in C^{n-1}(0,T)$.  They also prove that if $\g$ is real analytic, then $\l$ must be real analytic.

{\bf Acknowledgement:} We appreciate the conversations and comments we received at various stages from Kyle Kinneberg, Michael Frazier, Donald Marshall, Steffen Rohde, Fredrik Johansson-Viklund and Carto Wong.  Part of this research was performed while the second author was visiting the Institute for Pure and Applied Mathematics (IPAM), which is supported by the National Science Foundation, and he thanks the institute for its hospitality and the use of its facilities. 
%
\section{Preliminaries} \label{prelim section}
\subsection{Notation}

Let $I$ be an interval on the real line. The space $C^0(I)$ consists of all continuous functions on $I$ and $||\phi||_{\infty,I}=\sup_{t\in I} |\phi(t)|$ for $\phi\in C^0(I)$.

Let $\alpha\in (0,1)$. A function $\phi$ defined on $I$ is in $C^\alpha$ if $||\phi||_{\infty,I}<\infty$ and
its $\alpha-$H\"older norm is bounded:$$
||\phi||_{C^\alpha}:= \sup_{s,t\in I, s\neq t} \frac{|\phi(t) - \phi(s)|}{|t-s|^\alpha} <\infty.
$$

Let $n\in \mb{N}_0$, $\alpha\in [0,1]$ and $M>0$. A function $\phi$ is in $C^{n,\alpha}(I;M)$
 if $\phi',\cdots, \phi^{(n)}$ exist and are continuous and the following two conditions hold:
\begin{align*}
||\phi^{(k)}||_{\infty,I}\leq& \,M \, \mbox { for all } 0\leq k\leq n,\\
\text{ and }  ||\phi^{(n)}||_{C^\alpha}:=& \sup_{s,t\in I, s\neq t} \frac{|\phi^{(n)}(t) - \phi^{(n)}(s)|}{|t-s|^\alpha} \leq M.
\end{align*}
In particular,  the $n^{th}$ derivative of functions in $C^{n,1}$ are Lipschitz.  A function $\phi$ is in $C^n$ if $\phi\in C^{n,0}(I;M)$ for some $M$. When $\alpha\in(0,1)$, we also write $C^{n+\alpha}$ for $C^{n,\alpha}$.

Zygmund introduced a generalization of $C^{0,1}$ called $\Lambda_*$.  A continuous function $\phi$ is in $\Lambda_*(I)$ means that 
$$
||\phi||_{\Lambda_*}:= \sup_{s-\delta,s+\delta \in I, \delta > 0} 
					\frac{|\phi(s+\delta) +\phi(s-\delta)- 2\phi(s)|}{\delta} <\infty.
$$
We say that $\phi \in \Lambda^n_*(I;M)$ if  $\phi',\cdots, \phi^{(n)}$ exist and are continuous,  $\phi^{(n)} \in \Lambda_*$, and the following two conditions hold:
\begin{align*}
||\phi^{(k)}||_{\infty,I}\leq& \,M \, \mbox { for all } 0\leq k\leq n,\\
\text{ and }  ||\phi^{(n)}||_{\Lambda_*}\leq& M.
\end{align*}
Note that $ C^{n+1} \subset C^{n,1} \subset \Lambda_*^n $. 

The following proposition will be needed in Section \ref{sec:behavior at 0}.

\begin{proposition} \label{Cn_alpha}
If a function $\phi$ belongs to  $C^{n,\alpha}(I;M)$ then there exists $c=c(n, M)$ such that for all $t_0, t+ t_0\in I$,
$$|\phi(t + t_0) - \sum^n_{k = 0} \frac{1}{k!} t^k \phi^{(k)}( t_0)|\leq c t^{n + \alpha}.$$

\end{proposition}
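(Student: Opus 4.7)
The statement is a standard Taylor-with-remainder estimate for $C^{n,\alpha}$ functions, so the plan is to reduce it directly to the Hölder bound on $\phi^{(n)}$ via the integral form of Taylor's theorem.

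First I would dispose of the case $n=0$: by definition of $C^{0,\alpha}(I;M)$, $|\phi(t+t_0)-\phi(t_0)|\leq M|t|^{\alpha}$, so one can simply take $c=M$. For $n\geq 1$, I would start from the integral form of Taylor's theorem,
\begin{equation*}
\phi(t+t_0)=\sum_{k=0}^{n-1}\frac{t^{k}}{k!}\phi^{(k)}(t_0)+\int_{0}^{t}\frac{(t-s)^{n-1}}{(n-1)!}\phi^{(n)}(t_0+s)\,ds,
\end{equation*}
which is valid since $\phi^{(n)}$ exists and is continuous. Using the identity $\int_{0}^{t}\frac{(t-s)^{n-1}}{(n-1)!}ds=\frac{t^{n}}{n!}$, I would rewrite the remainder by subtracting and adding $\phi^{(n)}(t_0)$ inside the integral, which yields
\begin{equation*}
\phi(t+t_0)-\sum_{k=0}^{n}\frac{t^{k}}{k!}\phi^{(k)}(t_0)=\int_{0}^{t}\frac{(t-s)^{n-1}}{(n-1)!}\bigl[\phi^{(n)}(t_0+s)-\phi^{(n)}(t_0)\bigr]ds.
\end{equation*}

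Next I would apply the Hölder bound $|\phi^{(n)}(t_0+s)-\phi^{(n)}(t_0)|\leq M|s|^{\alpha}$ from the definition of $C^{n,\alpha}(I;M)$. Taking absolute values and substituting $s\mapsto|t|s$ after assuming $t>0$ (the case $t<0$ is symmetric by the same change of variable), the remainder is bounded by
\begin{equation*}
\frac{M}{(n-1)!}\int_{0}^{|t|}(|t|-s)^{n-1}s^{\alpha}\,ds=\frac{M}{(n-1)!}\,B(n,\alpha+1)\,|t|^{n+\alpha},
\end{equation*}
which gives the desired constant $c=c(n,\alpha,M)=\frac{M\,\Gamma(n)\Gamma(\alpha+1)}{(n-1)!\,\Gamma(n+\alpha+1)}$, depending only on $n$, $\alpha$, and $M$ as required (the space fixes $\alpha$).

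There is essentially no obstacle here; the only thing to be mindful of is that when $\alpha=0$ or $\alpha=1$ the same argument still works (with a trivial $|s|^{\alpha}\leq 1$ bound and Lipschitz bound respectively), and that the integral form of the remainder is applicable because $\phi^{(n)}$ is continuous on $I$. The estimate is sharp in scaling (it recovers the usual Lagrange remainder when $\alpha=1$).
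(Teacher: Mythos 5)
Your proof is correct and is exactly the argument the paper intends: it states only that the result "follows from the integral form of the remainder of Taylor series," and your write-up fills in that computation in the standard way (subtract $\phi^{(n)}(t_0)$ inside the integral remainder and apply the H\"older bound). No issues.
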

\noindent
The proof follows from the integral form of the remainder of Taylor series. 

We use $C$ for a universal constant. For estimates related to a driving function $\lbda\in C^{n, \alpha}([0,T];M)$, we use $c$ for constants depending on $M,n,T$. When constants depend on other factors, we will state this explicitly. 

\subsection{Loewner equation} \label{Loewner equation}

In the introduction we described how the Loewner equation can be used to encode a simple curve into its driving function.  This process can be reversed.
Let $\lbda$ be a real-valued continuous function on $[0,T]$ with $T>0$. Then the forward chordal Loewner equation is the following initial value problem:
\begin{equation} \label{forward LE}
\partial_t g_t (z)= \frac{2}{g_t(z)-\lbda(t)},~~~ g_0(z)=z.
\end{equation}
For each $z \in \mb{H}$, the solution $g_t(z)$ exists up to $T_z = \inf\{t>0: g_t(z)-\lbda(t)=0\}$. Let $K_t = \{ z\in \mb{H}: T_z\leq t\}$. It is known that $g_t$ is the unique conformal map from $\mb{H}\backslash K_t$ to $\mb{H}$ that satisfies
the hydrodynamic normalization at infinity:
$$g_t(z) = z + O(\frac{1}{z}),~~~~ \mbox{ near } z=\infty.$$
We say that $\lbda$ generates the curve $\g:[0,T] \to \overline{\mb{H}}$ if $\mb{H}\backslash K_t$ is the unbounded component of $\mb{H} \backslash \g(0,T]$. In this case $\lbda(t) = g_t(\g(t))$, $0\leq t\leq T.$ 
An important property of the chordal Loewner equation is the concatenation property, which says that for fixed $s$, the time-shifted driving function $\lambda(s + t)$ generates
 the mapped curve $g_{s}(\gamma(s, t))$. 
 For more details, see \cite{L}.

It was shown that if $\lbda\in C^{1/2} [0,T] $ with $||\lbda||_{C^{1/2}}<4$ then $\lbda$ generates a simple quasi-arc $\g$ (\cite{MR05}, \cite{Lind}). Since we work with  $\lbda\in C^\beta$ for $\beta>2$, on small intervals $||\lbda||_{C^{1/2}}\leq 1$. Therefore we are guaranteed that the corresponding Loewner curve is a simple curve. We can prove Theorems \ref{t: main theorem 1} and \ref{t:analytic} on small intervals, then use the concatenation property of the Loewner equation to derive the regularity of $\g$ on $[0,T]$. Henceforth, we assume $||\lbda||_{C^{1/2}}\leq 1.$

Changing \eqref{forward LE} by a negative sign gives the
backwards chordal Loewner equation:
\begin{equation} \label{backward LE}
\partial_t h_t (z)= \frac{-2}{h_t(z)-\xi(t)},~~~ h_0(z)=z
\end{equation}
for a continuous real-valued function $\xi$ defined on $[0,T]$.
The solution  $h_t(z)$ exists for all $z \in \H$ and $t \in [0, T]$, and $h_t$ is a conformal map from $\H$ into $\H$.  The forward and backward versions of the Loewner equation are related as follows:
if $g_t$ is the solution to \eqref{forward LE} with driving function $\l \in C[0, T]$ and $h_t$ is the solution to \eqref{backward LE} with driving function $\xi(t) = \l(T-t)$, then $h_t=g_{T-t} \circ g_T^{-1}$, and  in particular,  $h_T = g_T^{-1}$.

We think of \eqref{e: ODE} as a variant of the backward Loewner equation (with $\xi(u) = \l(s-u)$ and $f(u) = h_u(i \e) - \xi(u)$), and our first goal is to understand some basic properties of its solution
 $f(u) = f(u, s, \epsilon)$, when $(u,s) \in D :=  \{ (u,s) \, : \, 0 \leq u \leq s \leq T\}$.
 Further properties of $f(u,s,\epsilon)$ are in Section \ref{Properties}.

%
%
%

\begin{lemma}
\label{l: upward ODE}
Let $\lbda \in C^1([0,T]; M)$, and let $0\leq s\leq T$ and $\e>0$.  Then  the ODE
\begin{eqnarray*}
f'(u)&=&\frac{-2}{f(u)}+\lbda'(s-u),~~~~0\leq u\leq s,\\
f(0) & = & i \e \in \mb{H}. \nonumber
\end{eqnarray*}
 has a unique solution $f(u)=f(u,s,\e)$, with $0\leq u\leq s$, satisfying the following properties:

\noindent 
(i) $\I f$ is increasing in $u$.

\noindent
(ii) For all $(u,s) \in D =  \{ (u,s) \, : \, 0 \leq u \leq s \leq T\}$
$$\sqrt{3u + \e^2} \leq \I f(u,s,\e)\leq \sqrt{4u+\e^2} $$
$$\mbox{and }\,\,\,\,\,\,|\R f(u,s,\e)| \leq \sqrt{u} \leq \frac{1}{\sqrt{3}} \I f(u,s,\e)  .$$

\noindent
(iii) For every $\delta>0$, there is $\e(\delta)>0$ such that
$$|f(u, s, \e_1) - f(u, s, \e_2)|\leq \delta\mbox{  for all } (u,s)\in D \text{ and } \e_1,\e_2 \leq \e(\delta).$$
In particular, $f(u, s, \e)$ converges uniformly as $\e\to 0+$ to a limit denoted by $f(u, s)$. This limit is the family of curves $\g(s - u, s)$ generated by $\lbda_s$, $0 \leq s \leq T$.  

(iv) Suppose $\lbda\in C^n([0,T];M)$, and let $l+k\leq n$ and $k\leq n-1$. Then $\partial^l_u \partial^k_s f$ exists and is continuous in $(u,s) \in D$ for all $\e>0$.

(v) If $\lbda\in C^n([0,T];M)$ and $1\leq k\leq n-1$, then
$\partial^k_s f(0,s,\e)=0 \mbox{ for all }  s\in [0,T]$ and  $\e>0.$

\end{lemma}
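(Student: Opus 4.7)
The plan is to handle (i)--(v) roughly in order, using standard ODE theory together with the Loewner structure of the equation. For existence, uniqueness, and (i), the right-hand side $-2/f + \lambda'(s-u)$ is smooth in $f$ on $\H$ and continuous in $u$, so Picard--Lindel\"of gives a unique local solution. Writing $f = X + iY$, the imaginary-part equation reads $Y' = 2Y/(X^2 + Y^2)$, so $Y$ remains positive and strictly increasing from $\epsilon$; in particular the trajectory never leaves $\H$ and the solution extends to all of $[0,s]$.

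For (ii), the upper bound $Y^2 \leq 4u + \epsilon^2$ is immediate from $(Y^2)' = 4Y^2/(X^2 + Y^2) \leq 4$. The coupled lower bound $Y^2 \geq 3u + \epsilon^2$ together with $|X| \leq \sqrt{u}$ I would establish by a bootstrap argument. Both hold with equality at $u = 0$, so if the failure set is nonempty let $u_*$ be its infimum. A short continuity argument forces both inequalities to saturate simultaneously at $u_*$; at such a $u_*$ one has $X^2 + Y^2 = 4u_* + \epsilon^2$ and $(Y^2)'(u_*) = 4(3u_* + \epsilon^2)/(4u_* + \epsilon^2) > 3$, so the $Y$-bound survives just past $u_*$. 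To rule out the failure of the $X$-bound I would pass to the integrated form $X(u) = -2\int_0^u X/|f|^2\,dv + (\lambda(s) - \lambda(s-u))$, combine the elementary inequality $|X|/|f|^2 \leq 1/(2Y)$ with the lower bound on $Y$ just secured, and exploit the standing assumption $||\lambda||_{C^{1/2}} \leq 1$, which controls $|\lambda(s) - \lambda(s-u)|$ by $\sqrt{u}$ independently of $||\lambda'||_\infty$.

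For (iii), I would recognize $f$ as a shift of the backward Loewner flow: with $\xi(w) := \lambda(s-w) - \lambda(s-u)$ and $\tilde h$ its backward solution, direct substitution together with uniqueness yields $f(w,s,\epsilon) = \tilde h_w(\xi(0) + i\epsilon) - \xi(w)$. The uniform lower bound on $\I f$ from (ii) makes the right-hand side of the ODE uniformly Lipschitz on the range of $f$, so continuous dependence on the initial data gives a Cauchy criterion in $\epsilon$, uniform in $(u,s) \in D$, and hence uniform convergence to a limit $f(u,s)$. The identification $f(u,s) = \g(s-u,s)$ uses the forward--backward duality $h_T = g_T^{-1}$: $\g(s-u,s)$ is the tip at time $u$ of the curve generated by the time-shifted driving function $\lambda_{s-u}$, which is exactly $\lim_{\epsilon \to 0^+} \tilde h_u(\xi(0) + i\epsilon)$.

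Smoothness in (iv) follows from standard smooth dependence on parameters. Iterating the ODE, $\partial_u^l f$ is a rational function of $f$ whose forcing involves $\lambda',\dots,\lambda^{(l)}$ (evaluated at $s-u$); applying $\partial_s^k$ introduces $\lambda^{(k+1)},\dots,\lambda^{(k+l)}$. The hypotheses $l+k \leq n$ and $k \leq n-1$ are exactly what guarantees all these derivatives exist and are continuous, and the resulting linear equation for $\partial_s^k f$ has coefficients that remain bounded on $D$ by (ii). Finally (v) is essentially automatic: by (iv), $\partial_s^k f(u,s,\epsilon)$ is continuous, and at $u=0$ the map $s \mapsto f(0,s,\epsilon) = i\epsilon$ is constant, so its $s$-derivatives vanish identically. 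The main technical obstacle is closing the bootstrap in (ii) with the sharp constants $3,4,1$: this is the one place where genuinely Loewner-specific estimates, rather than generic ODE bookkeeping, need to be invoked.
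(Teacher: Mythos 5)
There is a genuine gap, and it sits exactly where you flagged the difficulty: the bound $|\R f(u)|\leq \sqrt{u}$ in (ii). Your plan is to write $x(u) = -2\int_0^u \frac{x}{|f|^2}\,dv + (\lbda(s)-\lbda(s-u))$, bound the integrand by $\frac{1}{2y}$ and use $y\geq\sqrt{3v}$. But that yields $|x(u)|\leq \int_0^u\frac{dv}{\sqrt{3v}}+\sqrt{u}=(1+\tfrac{2}{\sqrt 3})\sqrt{u}\approx 2.15\sqrt{u}$, which is strictly worse than the hypothesis $|x|\leq\sqrt{u}$ you are trying to propagate, so the bootstrap does not close. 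Worse, no self-consistent pair of constants exists along this route: if one posits $y^2\geq au$ and $|x|\leq b\sqrt{u}$, the $y$-equation forces $a+b^2\leq 4$ while your integral estimate forces $b\geq 1+\tfrac{2}{\sqrt a}$, and $a+(1+\tfrac{2}{\sqrt a})^2>4$ for every $a>0$. The paper avoids the integral estimate entirely by a sign argument: since $\partial_v\bigl(x(v)+\lbda(s-v)\bigr)=\frac{-2x}{x^2+y^2}$ has the sign of $-x$, on any maximal interval $[u_0,u]$ where $x\geq 0$ the quantity $x+\lbda(s-\cdot)$ is non-increasing, whence $x(u)\leq \lbda(s-u_0)-\lbda(s-u)\leq\sqrt{u-u_0}\leq\sqrt{u}$ using only $\|\lbda\|_{C^{1/2}}\leq 1$ — no lower bound on $y$ is needed, and the two estimates decouple (the $x$-bound is proved first, then feeds the $y$-bound). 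Separately, your claim that both inequalities must saturate simultaneously at the infimum $u_*$ of the failure set is unjustified (only the failing one must), though that part of the setup is repairable.

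A second, related gap is in (iii). The lower bound $\I f\geq\sqrt{3u+\e^2}$ does \emph{not} make the right-hand side uniformly Lipschitz in a way that survives $\e\to 0$: the Lipschitz constant of $z\mapsto -2/z$ along the trajectory is $2/|f|^2\leq 2/(3v+\e^2)$, whose integral over $[0,u]$ is of order $\log(u/\e^2)$, so the generic Gronwall factor blows up as $\e\to 0$ and gives no $\e$-uniform Cauchy estimate. What rescues the argument is again the Loewner-specific sign condition from (ii): since $|\R f|\leq \tfrac{1}{\sqrt 3}\I f$, one has $\R\frac{2}{f^2}\leq 0$ (equivalently $\R\frac{1}{f_{\e_1}f_{\e_2}}\leq 0$), so $|\partial_\e f|=\exp\int_0^u\R\frac{2}{f^2}\,dv\leq 1$ and hence $|f(u,s,\e_1)-f(u,s,\e_2)|\leq|\e_1-\e_2|$; this is the paper's route and you need something like it, not bare continuous dependence. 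Your treatment of existence, (i), the identification of the limit with $\g(s-u,s)$, and (iv)--(v) is essentially the paper's and is fine.
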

\begin{proof}
The equation (\ref{e: ODE}) is of the form:
\begin{eqnarray*}
f'(u)&=&G(f(u), u, s), 
\end{eqnarray*}
where $G(z, u, s) = \frac{-2}{z} + \lbda'(s - u)$ is jointly continuous in $z,u,s$, and Lipschitz in $z$ variable whenever $\I z\geq C>0$. So the solution exists on some interval containing 0. To show that the solution to (\ref{e: ODE}) exists on the whole interval $[0,s]$, it suffices to show that $(i)$ always holds. The idea of $(i)-(iii)$ comes from \cite{RTZ}, which contains a study of the Loewner equation when $||\lbda||_{C^{1/2}}<4$. For the convenience of the reader, we will present the proof here.

Let $x=x(u), y=y(u)$ be real and imaginary parts of $f(u)$. It follows from (\ref{e: ODE}) that
\begin{eqnarray}
(x + \lbda(s - \cdot))'  & = & \frac{-2x}{x^2 + y^2},\\
y' &= &\frac{2y}{x^2+ y^2}. \label{e: Dy}
\end{eqnarray}
In particular, $y$ is increasing and $(y^2)' \leq 4$. The former shows $(i)$, and the latter shows that $y\leq \sqrt{4u+\e^2}$.

\noindent
Now we will show that $|x(u)|\leq \sqrt{u}$, for $0 \leq u \leq s$.  
Suppose $0\leq x(u)$ and let $u_0 = \sup\{ v \in [0,u]: x(v)\leq 0\}$. So 
$$\partial_v (x(v) + \lbda(s - v)) \leq 0 \mbox{ for } u_0\leq v\leq u,$$
and 
$$ x(u) + \lbda( s- u ) \leq x(u_0) + \lbda( s- u_0) = \lbda( s- u_0).$$
Hence
$$x(u)\leq \lbda(s - u_0) - \lbda (s - u) \leq \sqrt{|u_0 - u|}\leq \sqrt{u}.$$
where the very last inequality follows since $||\lbda||_{1/2}\leq 1$. The same argument applies when $x(u)\leq 0$, proving that $|x(u)|\leq \sqrt{u}$.

\noindent
Next we will show $y(u) > \sqrt{3u}$ for $0\leq u\leq s$. Suppose this is not the case. 
Then since $y(0)=\e>0$, there exists $u_0\in (0,s]$ such that $y(u_0) = \sqrt{3u_0}$ and $y(u) \geq \sqrt{3u}$ for $u\in [0,u_0]$. It follows from (\ref{e: Dy}) that
$$(y^2)'=\frac{4y^2}{x^2 + y^2} \geq \frac{12u}{u + 3u} = 3 \mbox{ for } 0\leq u\leq u_0.$$
So $y(u_0)\geq \sqrt{3u_0 + \e^2} > \sqrt{3u_0}$. This is a contradiction. Therefore $y(u) > \sqrt{3u}$ and $(y^2)'\geq 3$. These show $(ii)$.

To show $(iii)$,  differentiate (\ref{e: ODE}) with respect to $\e$ to obtain
$$\partial_u(\partial_\e f) = \partial_\e \partial_u f = \frac{2 \partial_\e f}{f^2}.$$
Since $\partial_\e f(0, s, \e) = i$,
$$\partial_\e f(u, s, \e) = i \exp \int^u_0 \frac{2}{f^2(v, s, \e)}\, dv.$$
This implies
\begin{eqnarray*}
|\partial_\e f(u, s, \e)| &= &\exp \int^u_0 \R \frac{2}{f^2(v, s, \e)}\, dv \\
&= & \exp \int^u_0 \frac{2 (x^2(v) - y^2(v))}{(x^2(v) + y^2(v))^2}\, dv \leq 1.
\end{eqnarray*}
The last inequality comes from $(ii)$. It follows that 
$$|f(u,s,\e)-f(u,s,\e')|\leq |\e-\e'|, \mbox{ for all } 0\leq u\leq s\leq T,$$
and $f(u, s, \e)$ converges uniformly in $D$ to a limit, denoted by $f(u,s)$, as $\e\to 0^+$. 

Intuitively the limit $f(u,s)$ is equal to $\g(s-u,s)$ since $f(u, s, \e)$ satisfies the same ODE as $\g(s-u,s)$ does, and $\lim_{\e\to 0^+} f(0, s,\e) = \g(s - u, s) \large|_{u=0} = 0$. Indeed, from (\ref{e: g(s-u,s)}) and (\ref{e: ODE}) we can show that
\begin{equation} \label{e: difference}
| f(u,s, \e) - \g(s-u,s) | = | f(u_0, s, \e) - \g(s-u_0, s)| \exp\int^u_{u_0} \R \frac{2\,dv}{f(v, s, \e) \g(s-v, s) },
\end{equation}
with $0<u_0\leq u\leq s\leq T$ and $\e>0$.
Since $\g(s-v,s)$ is the tip of a Loewner curve generated by a driving function whose H\"older-1/2 norm is less than 1, then by \cite[Lemma 3.1]{C}, it satisfies
$$|\R \g(s - v,s)|\leq \I \g(s-v,s).$$
This implies that
 $$\R \frac{2}{f(v, s, \e) \g(s-v, s)} \leq 0.$$
Let $u_0\to 0^+$ and then $\e\to 0^+$ in (\ref{e: difference}) we get $f(u, s) = \g(s - u ,s)$.

Statement $(iv)$ follows from the standard ODE theory (see \cite{CL}, for instance) and the fact that $G$ is $C^{n-1}$ in $(u,s)$. 

We show $(v)$ by induction. For the base case,
 $$\partial_s f(0, s, \e)=\lim_{\delta \to 0}\frac{f(0, s + \delta, \e)-f(0, s,\e)}{\delta}=\lim_{\delta \to 0}\frac{\e - \e}{\delta}=0.$$
\noindent
Now supppose $\partial^k_s f(0,s,\e)=0$ for all $s\in [0,T]$. Then
$$\partial^{k+1}_s f(0, s, \e)=\lim_{\delta \to 0}\frac{\partial^k_s f(0, s + \delta, \e) - \partial^k_s f(0, s, \e)}{\delta}=0.$$
\end{proof}

\noindent
{\bf Remark.} For convenience, in this paper we only consider $\e\in (0,1]$. In this case, 
$$ \sqrt{3u} \leq |f(u, s, \e)| \leq \sqrt{Cu + \e^2} \leq C\sqrt{u} + C\e \leq c(T) \mbox{ for all } 0\leq u, s\leq T.$$ 

\noindent 
Later in Lemma \ref{l: partial^n_s f} we will show that $\partial^n_s f$ exists and is continuous in $(u,s)$.

\subsection{ODE lemmas}
The next lemma is frequently used in Section \ref{Properties} to investigate the regularity of $f(u, s, \e)$.
\begin{lemma} \label{l: ODE fact 1}
Consider a complex-valued function $X$ satisfying the initial value problem
$$X'(u)=P(u)X(u)+Q(u),~~~~ X(0)=0.$$
Suppose there exist constants $C, M_1 >0$ so that $|P(u)|\leq - C \R P(u)$ and $|Q(u)|\leq M_1$ for $0\leq u\leq u_0$. Then
$$|X(u)|\leq (C+1)M_1 u\mbox { for } 0\leq u\leq u_0.$$
\end{lemma}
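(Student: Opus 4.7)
The plan is to apply variation of constants. First I would introduce the integrating factor $\mu(u)=\exp\bigl(-\int_0^u P(v)\,dv\bigr)$, so that the ODE $X'=PX+Q$ becomes $(\mu X)'=\mu Q$. Since $X(0)=0$ and $\mu(0)=1$, integrating from $0$ to $u$ yields the explicit representation
\[
X(u)=\int_0^u \exp\Bigl(\int_v^u P(w)\,dw\Bigr)\,Q(v)\,dv, \qquad 0\le u\le u_0.
\]

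Next I would extract the crucial sign information from the hypothesis. The inequality $|P(u)|\le -C\,\Re P(u)$ forces $\Re P(u)\le 0$ on $[0,u_0]$ (since the left-hand side is nonnegative and $C>0$). Hence for every $0\le v\le u\le u_0$,
\[
\Bigl|\exp\Bigl(\int_v^u P(w)\,dw\Bigr)\Bigr|=\exp\Bigl(\int_v^u \Re P(w)\,dw\Bigr)\le 1.
\]

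Combining this bound with the representation of $X$ and with $|Q|\le M_1$ gives
\[
|X(u)|\le \int_0^u |Q(v)|\,dv\le M_1 u\le (C+1)M_1 u,
\]
which is the claimed estimate.

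There is essentially no obstacle: the lemma is a one-step consequence of the integrating-factor identity together with the observation that the hypothesis on $P$ makes the exponential factor a contraction. In fact the argument yields the slightly stronger bound $|X(u)|\le M_1 u$; I suspect the authors state the looser constant $(C+1)M_1$ so that the conclusion combines neatly with the more delicate iterative applications in Section \ref{Properties}, where the factor $C$ tracks a structural constant that will already appear in the inductive hypotheses.
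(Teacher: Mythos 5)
Your proof is correct, and it takes a cleaner route than the paper's. Both arguments start from the variation-of-constants representation, but the paper first integrates by parts to isolate $R(u)=\int_0^u Q(v)\,dv$, and is then forced to estimate a leftover integral $\int_0^u |e^{\mu(v)}|\,|P(v)|\,dv$; that is where the full hypothesis $|P|\leq -C\,\R P$ is invoked and where the extra factor $C$ enters, yielding $(C+1)M_1u$. You instead bound the Duhamel integral directly, using only the weaker consequence $\R P\leq 0$ to see that the propagator $\exp\bigl(\int_v^u P\bigr)$ has modulus at most $1$, and you obtain the sharper bound $M_1 u$. Your closing remark about why the paper carries the constant $C$ is essentially right: the integration-by-parts template is the one that is actually needed in Lemma \ref{l: ODE fact 2}, where the forcing term $-P(u)Q(u)$ is not bounded (there $P$ blows up like $1/u$ as $\e\to 0$) and the direct estimate would fail; stating Lemma \ref{l: ODE fact 1} with the looser constant $(C+1)M_1$ lets the two lemmas be proved and applied in a uniform way. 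The only thing worth making explicit in your write-up is that $\int_0^u P(w)\,dw$ is well defined (i.e. $P$ is locally integrable), which is implicit in the paper's formula as well and holds in all applications since $\e>0$ keeps $P$ bounded.
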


\begin{proof} Solving the equation, one obtains
$$X(u) = R(u) + e^{-\mu(u)} \int^u_0 e^{\mu(v)}P(v)R(v)\, dv,$$
where $\mu (u)=-\int^u_0 P(v)\, dv$ and $R(u)=\int^u_0 Q(v) \, dv$. Since $|R(u)| \leq M_1 u$,
\begin{eqnarray*}
|X(u)| & \leq & M_1 u + M_1 u |e^{-\mu(u)}| \int^u_0 |e^{\mu(v)}| \cdot |P(v)| \, dv	\\
         & \leq & M_1 u + M_1 u e^{-\R \mu(u)} \int^u_0 e^{\int^v_0 -\R P(w) dw} C(-\R P(v)) \, dv	\\
         & = & M_1 u + C M_1 u e^{-\R\mu(u)} \left(e^{-\int^u_0 \R P(v) \, dv}-1\right)	\\
         & =   & M_1 u + C M_1 u e^{-\R \mu(u)} \left( e^{\R \mu(u)}-1\right) 	\\
         & \leq & (C+1) M_1 u.
\end{eqnarray*}
\end{proof}


In some cases, we will need a more general version of Lemma \ref{l: ODE fact 1}.

\begin{lemma} \label{l: ODE fact 2}
Let $Y$ be a solution to
$$Y'(u)=P(u)Y(u) - P(u)Q(u) + R(u),~~~~Y(0)=Q(0)$$
with $|P|\leq -C\R P$ and $|Q(v)-Q(0)|\leq \w(v) $ on $[0,u_0]$, where $\w$ is a non-decreasing function and $C>0$.  

\noindent
(i) If $|R|\leq M_2u^{\beta-1}$ for some constant $M_2$, then
$$|Y(u) - Q(u)|\leq (C+1) \w(u) +(C+1)\frac{M_2}{\beta} u^\beta.$$
\noindent
(ii) If $Y(0)=Q(0)=0$ and $|R|\leq M_2$, then
$$|Y(u)|\leq C \w(u) + (C+1)M_2 u.$$
\noindent
(iii) More generally, 
$$|Y(u) - Q(u)|\leq (C+1) \w(u) +(C+1)\int^u_0 |R(v)| \, dv.$$
\end{lemma}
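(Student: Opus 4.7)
The plan is to extend the integrating-factor argument of Lemma \ref{l: ODE fact 1}. Setting $\mu(u) = -\int_0^u P(v)\,dv$ as in that proof, variation of parameters applied to the linear ODE for $Y$ gives the explicit formula
$$Y(u)=e^{-\mu(u)}Q(0)+e^{-\mu(u)}\int_0^u e^{\mu(v)}\bigl(-P(v)Q(v)+R(v)\bigr)\,dv.$$

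The key algebraic step I would perform is to split $Q(v)=[Q(v)-Q(0)]+Q(0)$ inside the integrand. The piece that still contains $Q(0)$ involves a factor of $\int_0^u e^{\mu(v)}P(v)\,dv$, which, via the identity $(e^{\mu})'=-Pe^{\mu}$, evaluates in closed form to $1-e^{\mu(u)}$. After this simplification the $Q(0)$ contributions telescope into the constant $Q(0)$, and I arrive at
$$Y(u)-Q(u)=[Q(0)-Q(u)]-e^{-\mu(u)}\int_0^u e^{\mu(v)}P(v)[Q(v)-Q(0)]\,dv+e^{-\mu(u)}\int_0^u e^{\mu(v)}R(v)\,dv.$$

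Next I would estimate the three pieces separately. The first is at most $\w(u)$ by the hypothesis on $Q$. For the second, monotonicity of $\w$ lets me pull $\w(u)$ outside the integral, and then the bound $|P|\leq -C\R P$ turns the remaining $|e^{\mu}P|$ integral into a total derivative, producing at most $C(1-e^{-\R\mu(u)})\leq C$, exactly as in Lemma \ref{l: ODE fact 1}. Combined with the first piece this gives $(C+1)\w(u)$. The third piece is dominated by $\int_0^u|R(v)|\,dv$ via the trivial bound $e^{\R\mu(v)-\R\mu(u)}\leq 1$ for $v\leq u$. This proves (iii). Parts (i) and (ii) then drop out by specialization: in (i), the hypothesis $|R|\leq M_2 v^{\beta-1}$ integrates to $M_2 u^\beta/\beta$, and the factor $(C+1)$ in front is slack; in (ii), I would either combine (iii) with $|Y(u)|\leq|Y(u)-Q(u)|+|Q(u)|$ and $|Q(u)|=|Q(u)-Q(0)|\leq\w(u)$, or rerun the direct estimate with $Q(0)=0$ so that the leading $[Q(0)-Q(u)]$ term vanishes and saves an $\w(u)$ in the final bound.

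The main obstacle is bookkeeping rather than any new idea: one must take the real part at the right moment when estimating the complex-valued integrating factor, applying $|P|\leq -C\R P$ only after the bound $|e^{\mu(v)}|=e^{\R\mu(v)}$, and keep track of the three separate contributions so the constants line up as stated. Beyond that, the argument is a direct generalization of the computation in Lemma \ref{l: ODE fact 1}.
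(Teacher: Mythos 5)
Your proposal is correct and follows essentially the same route as the paper: the variation-of-parameters formula, the splitting $Q=Q(0)+[Q-Q(0)]$ with the telescoping of the $Q(0)$ contributions, and the bound $|P|\le -C\,\R P$ applied after $|e^{\mu(v)}|=e^{\R\mu(v)}$. The one small divergence is the $R$-term, which the paper handles by integrating by parts against $S(u)=\int_0^u R$, while you bound $e^{-\mu(u)}\int_0^u e^{\mu(v)}R\,dv$ directly by $\int_0^u|R|$ using the monotonicity of $\R\mu$ — a slightly simpler and sharper step that still yields all three stated inequalities (note that for (ii) only your second option, rerunning the estimate with $Q(0)=0$, gives the stated constant; the route through (iii) would give $(C+2)\w(u)$).
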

\begin{proof}
Let $\mu(u)=\int^u_0 -P(v) \, dv$ and $S(u)=\int^u_0 R(v) \, dv$. We have
\begin{align*}
Y(u) &=e^{-\mu(u)}Y(0) + e^{-\mu(u)}\int^u_0 e^{\mu(v)}(-P Q + R) \, dv \\
 &=Q(0)+e^{-\mu(u)}\int^u_0 e^{\mu(v)}(-P)[Q-Q(0)] \, dv + e^{-\mu(u)}\int^u_0 e^{\mu(v)}R \, dv \\
 &=Q(0)+e^{-\mu(u)}\int^u_0 e^{\mu(v)}(-P)[Q-Q(0)]\, dv+ S(u)-e^{-\mu(u)}\int^u_0 e^{\mu(v)}(-P) S \,dv,
\end{align*}
where the last equality follows from an integration by parts.
Therefore under the first assumption, $|S(u)| \leq M_2 u^\beta/\beta$ and
$$|Y(u)-Q(u)|\leq |Q(0)-Q(u)| + e^{-\R\mu(u)}\int^u_0 e^{\R \mu(v)}C (-\R P)\w(v) \, dv +|S(u)|$$
$$ +  e^{-\R \mu(u)}\int^u_0 e^{\R \mu(v)} C (-\R P) \frac{M_2}{\beta} u^\beta \,dv$$
$$\leq \w(u) + C \w(u) + \frac{M_2}{\beta} u^\beta +C\frac{M_2}{\beta} u^\beta.$$
Under the second assumption,
$$|Y(u)|\leq e^{-\R\mu(u)}\int^u_0 e^{\R \mu(v)}C (-\R P) \w(v)\, dv + |S(u)|  $$
$$ +  e^{-\R \mu(u)}\int^u_0 e^{\R \mu(v)} C(-\R P) M_2 u \,  dv$$
$$\leq C \w(u) +M_2 u + CM_2 u.$$
\end{proof}

\section{Properties of $f(u,s,\e)$} \label{Properties}

In this section, we will prove all important properties of $f(u, s, \e)$, which are summarized in Proposition \ref{summary}. Then we will let $\e \to 0^+$ to obtain properties of $f(u, s) = \g(s - u, s)$. 
To accomplish this, we will show that $f(u,s, \e)$ and its partial derivatives satisfy the  type of ODE considered in Lemmas \ref{l: ODE fact 1} and \ref{l: ODE fact 2}.  These lemmas then provide  us with the needed estimates about $f(u,s, \e)$. The next two lemmas concern the $s$-derivatives of $f(u,s, \e)$.
%
%

\begin{lemma}\label{l: partial^k_s f} Suppose $\lbda\in C^n([0,T];M)$ with $n\geq 2$. For every $1\leq k\leq n-1$, there exists a function $ Q_k = Q_k(u,s,\e)$ such that
$$\partial_u (\partial_s^k f) = \frac{2}{f^2} \partial^k_s f + Q_k.$$
with $(u, s) \in D$, and $\e\in (0,1]$. Moreover there exists constant $c=c(M,n,T)>0$ so that
$$|\partial^k_s f(u, s, \e)|\leq c u.$$
\end{lemma}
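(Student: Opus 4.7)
The plan is to argue by induction on $k$, simultaneously establishing the recursive formula for $Q_k$ and the bound $|\partial_s^k f| \leq cu$. Throughout, Lemma \ref{l: upward ODE}(iv) guarantees that $\partial_s^k f$ and $\partial_u \partial_s^k f$ exist and are continuous on $D$ for $k \leq n-1$, and Lemma \ref{l: upward ODE}(v) supplies the initial condition $\partial_s^k f(0,s,\e) = 0$.

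For the base case $k=1$, differentiating \eqref{e: ODE} with respect to $s$ gives
\[
\partial_u(\partial_s f) = \frac{2}{f^2}\partial_s f + \lbda''(s-u),
\]
so $Q_1 = \lbda''(s-u)$ with $|Q_1| \leq M$. To apply Lemma \ref{l: ODE fact 1} with $P = 2/f^2$, write $f = x+iy$ and use Lemma \ref{l: upward ODE}(ii) ($x^2 \leq u$, $y^2 \geq 3u + \e^2$) to estimate
\[
\frac{|P|}{|\R P|} = \frac{x^2+y^2}{y^2-x^2} \leq \frac{5u+\e^2}{2u+\e^2} \leq \frac{5}{2},
\]
so $|P| \leq -\tfrac{5}{2}\R P$. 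Lemma \ref{l: ODE fact 1} then yields $|\partial_s f| \leq \tfrac{7}{2}Mu$.

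For the inductive step, differentiating the identity $\partial_u \partial_s^{k-1} f = (2/f^2)\partial_s^{k-1} f + Q_{k-1}$ once more in $s$ produces the recursion
\[
Q_k = \partial_s Q_{k-1} - \frac{4}{f^3}(\partial_s f)(\partial_s^{k-1} f).
\]
I claim that, for $k \geq 2$, $Q_k$ is the sum of $\lbda^{(k+1)}(s-u)$ and finitely many terms of the form $c\cdot f^{-a}\prod_{j=1}^{k-1}(\partial_s^j f)^{b_j}$ satisfying the invariants
\[
a = 1 + \sum_j b_j \qquad\text{and}\qquad \sum_j j\,b_j = k.
\]
This is checked by induction on $k$: differentiating a factor $1/f^a$ in $s$ raises both $a$ and $\sum b_j$ by one; differentiating $(\partial_s^j f)^{b_j}$ converts one factor of $\partial_s^j f$ into a factor of $\partial_s^{j+1} f$, preserving $a$ and $\sum b_j$; in either case $\sum j b_j$ increases by exactly one, matching the new weight $k$. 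The new term $-(4/f^3)(\partial_s f)(\partial_s^{k-1} f)$ also satisfies these invariants, and differentiating $\lbda^{(k)}$ produces $\lbda^{(k+1)}$.

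Using $|f| \geq \sqrt{3u}$ together with the inductive hypothesis $|\partial_s^j f| \leq c_j u$ for $j \leq k-1$, each product term is controlled by $Cu^{\sum b_j - a/2} = Cu^{(\sum b_j - 1)/2} \leq C$, since $\sum b_j \geq 1$ and $u \leq T$. Together with $|\lbda^{(k+1)}|\leq M$, this gives $|Q_k| \leq c(M,n,T)$. Applying Lemma \ref{l: ODE fact 1} with the same $P$, $Q = Q_k$, and initial condition $\partial_s^k f(0,s,\e)=0$ then yields $|\partial_s^k f| \leq cu$ and closes the induction. The main obstacle is the combinatorial bookkeeping establishing the invariant $a = 1 + \sum_j b_j$: it is precisely this relation that lets the singular denominators $f^{-a}$ be neutralized by the linear-in-$u$ bounds on $\partial_s^j f$, keeping $Q_k$ uniformly bounded rather than blowing up as $u \to 0$.
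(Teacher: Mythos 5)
Your proof is correct and follows essentially the same route as the paper: differentiate the ODE in $s$, derive the recursion $Q_k = \partial_s Q_{k-1} - \frac{4}{f^3}(\partial_s f)(\partial_s^{k-1}f)$, show $Q_k$ is bounded by combining $|f|\geq\sqrt{3u}$ with the inductive bound $|\partial_s^j f|\leq cu$, and conclude via Lemma \ref{l: ODE fact 1}. Your explicit invariants $a = 1+\sum_j b_j$ and $\sum_j j\,b_j = k$ are just a cleaner bookkeeping of the paper's claim that the remainder terms have the form $\frac{c}{f^m}\prod_{j=1}^{m-1}\partial_s^{m_j}f$ with $3\leq m\leq k+1$.
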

\begin{proof} 
We will prove the lemma by induction. Let $k=1$ and $n\geq 2$. Fix $s\in [0,T]$ and $\e\in (0,1)$, and let $X(u)=\partial_s f(u,s, \e)$. Then
\begin{align*}
X'(u)=\partial_u\partial_s f(u, s, \e)=\partial_s \partial_u f(u, s, \e) &= \frac{2}{f^2(u, s, \e)}\partial_s f(u, s, \e)+\lbda''(s - u)\\
&=\frac{2}{f^2(u, s, \e)}X(u) + \lbda''(s - u),
\end{align*}
and $X(0)=\partial_s f(0, s, \e)=0$. Let $P_s=P_s(u, \e)=\frac{2}{f^2(u, s, \e)}$ and $Q_1(u, s, \e)=\lbda''(s - u)$. Clearly, $|Q_1|\leq M$. We will show that $P_s(\cdot, \e)$ satisfies the property of $P$ in Lemma \ref{l: ODE fact 1}.
Indeed, let $f(u, s, \e)=x+iy$. It follows from Lemma \ref{l: upward ODE}(ii) that there exists a constant $C>0$ such that 
$$|P_s(u, \e)|=\frac{2}{x^2+y^2}\leq -C\frac{2(x^2-y^2)}{(x^2+y^2)^2}=-C \,\R P_s(u, \e).$$
Applying Lemma \ref{l: ODE fact 1}, we obtain
$$|\partial_s f(u, s, \e)|\leq cu$$
completing the base case.

Now suppose  the lemma holds for $1\leq k-1 \leq n-2$ and $\partial_u (\partial^{k-1}_s f)=P_s\partial^{k-1}_s f + Q_{k-1}$. Then
$$\partial_u \partial^k_s f = \partial_s (\partial_u \partial^k_s f)=P_s\partial^k_s f + Q_k,$$
with $Q_k=\partial_s Q_{k-1}-\dfrac{4}{f^3}(\partial_s f)(\partial^{k-1}_s f).$
One can show by induction that 
$$Q_k=\lbda^{(k+1)}(s-u)+R_k$$
with $R_k(u, s, \e) =\sum \mbox{ terms}$, 
where the number of terms is no more than $k-1$ and each term has the form
$$\frac{c}{f^m}\prod^{m-1}_{j=1} \partial^{m_j}_s f,$$
for some $3\leq m\leq k+1$, and $1\leq m_j\leq k-1$. This term is by induction no bigger than
$$\frac{c}{u^{m/2}}u^{m-1}=cu^{m/2-1}\leq c(M,k,T) \sqrt{u}.$$
So $|Q_k|$ is bounded by a constant $c=  c(M,k,T)$, and hence Lemma \ref{l: ODE fact 1} implies that $|\partial^k_s f|\leq cu$. 
\end{proof}

\noindent
{\bf Remark.} $R_1=0$ and $R_k$ satisfies a recursive formula:
$$R_{k+1}(u, s, \e) = \partial_s R_k(u, s, \e) - \frac{4}{f(u, s, \e)^3} (\partial_s f(u, s, \e)) (\partial^k_s f(u, s, \e)).$$
We have shown that for $1\leq k\leq n-1$,
$$|R_k| \leq c(M, k, T) \sqrt{u}.$$
Since $R_n$ is only related to $\partial^k_s f$ for $0\leq k\leq n-1$, we have the same inequality:
$$|R_n| \leq c(M, n, T) \sqrt{u}.$$

\begin{lemma}\label{l: partial^n_s f}
Suppose $\lbda\in C^n([0,T];M)$ then $\partial^n_s f(u, s, \e)$ exists and if $\lbda\in C^{n,\alpha}([0,T]; M)$ then 
$$|\partial^n_s f(u,s,\e)|\leq cu^\alpha,$$
where $c=c(M,n,T)$. 
\end{lemma}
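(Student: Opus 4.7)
The plan is to extend the inductive argument of Lemma \ref{l: partial^k_s f} one more step, from $k=n-1$ to $k=n$. The obstacle is immediate: the forcing term $Q_{n-1}(u,s,\e) = \lambda^{(n)}(s-u) + R_{n-1}$ in the ODE $\partial_u(\partial_s^{n-1} f) = P_s \partial_s^{n-1} f + Q_{n-1}$ (with $P_s = 2/f^2$) cannot be differentiated directly in $s$ under the hypothesis $\lambda \in C^n$, since doing so would involve the nonexistent $\lambda^{(n+1)}$.

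To circumvent this, I consider the difference quotient $X_h(u):= h^{-1}[\partial_s^{n-1}f(u,s+h,\e) - \partial_s^{n-1}f(u,s,\e)]$, which satisfies $X_h(0)=0$ by Lemma \ref{l: upward ODE}(v) and a linear ODE $X_h'(u) = P_s(u,\e) X_h(u) + F_h(u)$, where $F_h$ is a sum of difference quotients of $P_{s+h}-P_s$, $R_{n-1}(\cdot,s+h,\e)-R_{n-1}(\cdot,s,\e)$, and $\lambda^{(n)}(s+h-\cdot)-\lambda^{(n)}(s-\cdot)$. The first two converge uniformly in $u$ as $h\to 0$ by Lemma \ref{l: upward ODE}(iv). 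For the last, I use variation of parameters together with integration by parts: with $\mu_s(v) := -\int_0^v P_s(w,\e)\,dw$ and $\Lambda_h(v) := h^{-1}[\lambda^{(n-1)}(s+h-v) - \lambda^{(n-1)}(s-v)]$ (whose $v$-derivative equals the negative of the $\lambda^{(n)}$ difference quotient), I integrate by parts in
$$\int_0^u e^{\mu_s(v)}\,\frac{\lambda^{(n)}(s+h-v)-\lambda^{(n)}(s-v)}{h}\,dv,$$
transferring the $v$-derivative onto the smooth factor $e^{\mu_s(v)}$. Since $\lambda^{(n-1)}\in C^1$, $\Lambda_h\to\lambda^{(n)}(s-\cdot)$ uniformly on $[0,s]$, so $X_h$ converges uniformly, and the limit defines $\partial_s^n f$; continuity in $(u,s)$ is inherited from the resulting integral formula.

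After rearrangement, $\partial_s^n f$ has the explicit representation
$$\partial_s^n f(u,s,\e) = [\lambda^{(n)}(s)-\lambda^{(n)}(s-u)] - e^{-\mu_s(u)}\int_0^u P_s(v,\e)\, e^{\mu_s(v)}[\lambda^{(n)}(s-v)-\lambda^{(n)}(s)]\,dv + e^{-\mu_s(u)}\int_0^u e^{\mu_s(v)}R_n(v,s,\e)\,dv,$$
where $R_n = \partial_s R_{n-1} - \frac{4}{f^3}(\partial_s f)(\partial_s^{n-1}f)$ is well-defined under $\lambda\in C^n$ and satisfies $|R_n|\leq c\sqrt{v}$ by the remark after Lemma \ref{l: partial^k_s f}. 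Under $\lambda\in C^{n,\alpha}$, the first bracket is bounded by $Mu^\alpha$; for the middle integral, I use $|\lambda^{(n)}(s-v)-\lambda^{(n)}(s)|\leq Mu^\alpha$ together with $|P_s|\leq -C\R P_s$ (from the proof of Lemma \ref{l: partial^k_s f}) and the identity $e^{-\R\mu_s(u)}\int_0^u(-\R P_s(v,\e))\,e^{\R\mu_s(v)}\,dv = 1-e^{-\R\mu_s(u)}\leq 1$; and the third integral is bounded by $c u^{3/2}\leq cT^{3/2-\alpha}u^\alpha$. This yields $|\partial_s^n f|\leq c(M,n,T)\,u^\alpha$, uniformly in $\e\in(0,1]$.

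The main obstacle is justifying the passage to the limit in the difference quotient via the integration-by-parts step, since the pointwise limit of $h^{-1}[\lambda^{(n)}(s+h-v)-\lambda^{(n)}(s-v)]$ does not exist under $\lambda\in C^n$ alone. Once that is done, both the existence of $\partial_s^n f$ and its Hölder bound follow from standard ODE identities that never require the unavailable $\lambda^{(n+1)}$.
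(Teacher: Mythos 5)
Your proof is correct and is essentially the paper's argument in a different wrapper: the paper obtains the existence of $\partial_s^n f$ via the change of unknown $X=\partial_s^{n-1}f+\lbda^{(n-1)}(s-u)$, which turns the forcing term into one that is $C^1$ in $s$ so that standard dependence-on-parameters applies, and your explicit difference-quotient argument with integration by parts in the Duhamel integral is precisely the unrolled version of that substitution. The representation formula you arrive at and the three-term estimate (the $Mu^\alpha$ oscillation of $\lbda^{(n)}$, the $|P_s|\leq -C\R P_s$ trick yielding $1-e^{-\R\mu_s(u)}\leq 1$, and the $|R_n|\leq c\sqrt{u}$ bound) coincide with the paper's application of Lemma \ref{l: ODE fact 2}$(i)$.
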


\noindent 
{\bf Remark.} If $\lbda\in C^n([0,T]; M)$, then we can bound $ \displaystyle |\partial^n_s f(u, s, \e)|$ by the oscillation of $\lbda^{(n)}$:
$$ \displaystyle |\partial^n_s f(u, s, \e)|\leq c \sup\{|\lbda^{(n)}(u_1)-\lbda^{(n)}(u_2)|: |u_1-u_2|\leq u, u_1, u_2\in [0,s] \}\leq cM.$$

\begin{proof}
It follows from the proof of the previous lemma that
\begin{eqnarray*}
\partial_u (\partial^{n-1}_s f) &=&P_s \partial^{n-1}_s f + Q_{n-1}\\
&=& P_s \partial^{n-1}_s f + \lbda^{(n)}(s-u)+R_{n-1}.
\end{eqnarray*}
So
$$ \partial_u X = P_s X + Q, ~~~~X|_{ u = 0}=\lbda^{(n-1)}(s),
$$
where $X=\partial^{n-1}_s f + \lbda^{(n-1)}(s-u)$ and $Q=- P_s \lbda^{(n-1)}(s-u)+R_{n-1}$. Since $Q$ is $C^1$ jointly in $(u,s)$, $\partial_s X$ exists and satisfies
$$\partial_u (\partial_s X)=P_s \partial_s X - P_s \lbda^{(n)}(s-u) + R_n.
$$
and  $\partial_s X|_{u=0}=\lbda^{(n)}(s)$. Hence $\partial^n_s f$ exists and is continuous in $(u,s)$. Since $|R_n|\leq c(M,n,T)$, apply Lemma \ref{l: ODE fact 2} $(i)$ with $\w \equiv Mu^\alpha$, $M_2=c$, and $\beta=1$ to obtain
$$|\partial^n_s f|=|\partial_s X-\lbda^{(n)}(s-u)|\leq (C+1) Mu^\alpha  + cu\leq cu^\alpha.$$
\end{proof}

The next three lemmas concern the oscillation of $\partial^k_s f$ in the variable $s$.  
In the proofs, we omit $\e$ from the formulas at times (for ease of reading), but we remind the reader that the functions $f, P_s, Q_k, R_k$ do depend on the three variables $u,s,\e$.
\begin{lemma} \label{l: lambda in C^1}
Suppose $\lbda\in C^{1,\alpha}([0,T];M)$ with $\alpha \in (0,1]$. Then
$$|f(u, s + \delta, \e)-f(u, s, \e)|\leq  c\min(u \delta^\alpha,\delta u^\alpha),$$
$$|\partial_s f(u, s + \delta, \e)-\partial_s f(u, s, \e)|\leq c(1 + \frac{\e}{\alpha}) \min (u^\alpha, \delta^\alpha)$$
for $0\leq u\leq  s\leq s+\delta\leq T$ and $\e>0$. 
\end{lemma}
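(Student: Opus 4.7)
My plan is to bound the two quantities separately by setting up linear ODEs that fit the framework of Lemmas \ref{l: ODE fact 1} and \ref{l: ODE fact 2}, and then estimating carefully.

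For $F(u) := f(u,s+\delta,\e) - f(u,s,\e)$, subtracting the two ODEs gives
$$F'(u) = P(u)F(u) + Q_F(u), \qquad F(0) = 0,$$
with $P(u) = 2/(f(u,s,\e) f(u,s+\delta,\e))$ and $Q_F(u) = \lambda'(s+\delta-u) - \lambda'(s-u)$. Applying the bound $|x|\leq y/\sqrt{3}$ from Lemma \ref{l: upward ODE}(ii) to both $f(u,s,\e)$ and $f(u,s+\delta,\e)$, one checks $|P|\leq C(-\R P)$ for an absolute constant $C$. Then $|Q_F|\leq M\delta^\alpha$ combined with Lemma \ref{l: ODE fact 1} immediately gives $|F(u)|\leq cu\delta^\alpha$. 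For the complementary bound $|F(u)|\leq c\delta u^\alpha$, I would exploit that $Q_F$ has the antiderivative
$$\Lambda(u) := -\bigl(\lambda(s+\delta-u)-\lambda(s-u)\bigr) + \bigl(\lambda(s+\delta)-\lambda(s)\bigr) = \int_s^{s+\delta} [\lambda'(r)-\lambda'(r-u)]\,dr,$$
which satisfies $|\Lambda(u)|\leq M\delta u^\alpha$ since $\lambda'\in C^\alpha$. Integrating by parts in the Duhamel formula (the same manipulation as in the proof of Lemma \ref{l: ODE fact 2}) produces $F(u) = \Lambda(u) + e^{-\mu(u)}\int_0^u P(v)e^{\mu(v)}\Lambda(v)\,dv$, after which $|\Lambda(v)|\leq M\delta u^\alpha$ and $|P|\leq C(-\R P)$ yield the estimate.

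For $G(u) := \partial_s f(u,s+\delta,\e) - \partial_s f(u,s,\e)$, I would follow the substitution from the proof of Lemma \ref{l: partial^n_s f}: set $X(u,s,\e) := \partial_s f(u,s,\e) + \lambda'(s-u)$, which satisfies $X'(u) = P_s(u,\e)X(u) - P_s(u,\e)\lambda'(s-u)$ with $X(0)=\lambda'(s)$. Letting $Y(u):= X(u,s+\delta,\e) - X(u,s,\e)$, a direct expansion gives
$$Y'(u) = P_s(u,\e)Y(u) - P_s(u,\e)Q(u) + R(u), \qquad Y(0)=Q(0),$$
with $Q(u)=\lambda'(s+\delta-u)-\lambda'(s-u)$ and $R(u)=(P_{s+\delta}(u,\e)-P_s(u,\e))\,\partial_s f(u,s+\delta,\e)$. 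Since $G = Y-Q$, Lemma \ref{l: ODE fact 2}(iii) yields
$$|G(u)| \leq (C+1)\omega(u) + (C+1)\int_0^u |R(v)|\,dv,$$
where taking $\omega(v) = 2M\min(v^\alpha,\delta^\alpha)$ (a non-decreasing bound on $|Q(v)-Q(0)|$, obtained by estimating the difference in two ways using $\lambda'\in C^\alpha$) supplies the principal $c\min(u^\alpha,\delta^\alpha)$ term of the conclusion.

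The main obstacle will be estimating $\int_0^u|R(v)|\,dv$. Using $|\partial_s f(v,s+\delta,\e)|\leq cv^\alpha$ from Lemma \ref{l: partial^n_s f} and factoring $P_{s+\delta}-P_s = 2(f_s^2-f_{s+\delta}^2)/(f_s^2 f_{s+\delta}^2)$, together with $|f|\leq C\sqrt{v+\e^2}$ and $|f|^2\geq 3v+\e^2$ from Lemma \ref{l: upward ODE}(ii), I would derive $|P_{s+\delta}-P_s| \leq c|F(v)|/(v+\e^2)^{3/2}$. Substituting the first-part bound $|F(v)|\leq c\min(v\delta^\alpha,\delta v^\alpha)$ reduces matters to estimating
$$\int_0^u v^\alpha\,\min(v\delta^\alpha,\delta v^\alpha)/(v+\e^2)^{3/2}\,dv.$$
I would split the interval of integration at $v=\delta$ (to use the tighter piece of the minimum in each regime) and at $v=\e^2$ (to exploit the regularization of $(v+\e^2)^{-3/2}$). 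The $1/\alpha$ factor arises when integrating $v^{\alpha-1}$-type expressions across $v=\e^2$, while the multiplicative $\e$ in $\e/\alpha$ originates from the sharp smallness $|F(v)|\leq cv\delta^\alpha$ at small $v$; these combine to produce $\int_0^u|R(v)|\,dv \leq c(\e/\alpha)\min(u^\alpha,\delta^\alpha)$, finishing the proof.
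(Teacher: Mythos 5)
Your proposal is correct and follows essentially the same route as the paper: the same difference ODEs, Lemma \ref{l: ODE fact 1} for the first bound with $Q=\lbda'(s+\delta-\cdot)-\lbda'(s-\cdot)$, and Lemma \ref{l: ODE fact 2}(iii) for the second with the same $Q$ and with $R=(P_{s+\delta}-P_s)\partial_s f$, estimated by factoring $P_{s+\delta}-P_s$ and feeding in the first inequality. The only (harmless) deviations are that the paper gets the $c\,\delta u^\alpha$ half of the first bound in one line by integrating $|\partial_s f|\leq cu^\alpha$ over $[s,s+\delta]$ rather than via your antiderivative/integration-by-parts argument, and it groups the $Y$-equation with $P_{s+\delta}$ as leading coefficient (so $R$ involves $\partial_s f(u,s)$); your final heuristic that the whole of $\int_0^u|R|$ carries a factor $\e/\alpha$ is slightly off (the $\delta^\alpha u^{\alpha+1/2}$ piece does not), but this does not affect the stated conclusion.
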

\begin{proof}
Since $|\partial_s f(u, s, \e)|\leq c u^\alpha$ (by Lemma \ref{l: partial^n_s f}),
$$|f(u, s + \delta, \e) - f(u, s, \e)| \leq c \delta u^\alpha.$$
Omitting the parameter $\e$ for convenience, we have
$$\partial_u [f(u, s + \delta) - f(u, s)]=\frac{2}{f(u, s) f(u, s + \delta)}[f(u, s + \delta) - f(u, s)] + \lbda'(s + \delta - u) - \lbda'(s - u),$$
and $f(0, s + \delta) -  f(0, s)=0$. We see that $P:=\dfrac{2}{f(u, s) f(u, s + \delta)}$ satisfies
$$|P(u)|\leq - C \R P(u)$$
and that $Q=\lbda'( s + \delta - u) - \lbda'( s - u)$ is bounded by $M \delta^\alpha$.
Therefore, Lemma \ref{l: ODE fact 1} implies
$$|f (u, s + \delta) - f(u, s)|\leq CM u \delta^\alpha.$$

It remains to prove the last inequality. We have
$$\partial_u [\partial_s f(u, s + \delta) + \lbda'(s + \delta - u)] = P_{s+\delta} \partial_s f(u, s + \delta),$$
and
$$\partial_u [\partial_s f(u, s) + \lbda'(s - u)] = P_{s} \partial_s f(u, s).$$
So
\begin{align*}
\partial_u [\partial_s f(u, s + \delta) + \lbda'&(s + \delta - u) - \partial_s f(u, s) - \lbda'(s - u)] \\
= P_{s + \delta} & [\partial_s f(u, s + \delta) +\lbda'(s + \delta - u) - \partial_s f(u, s) - \lbda'(s - u)] \\
 &- P_{s + \delta} \left(\lbda'(s + \delta - u)  - \lbda'(s - u)\right) + (P_{s+\delta}-P_s)\partial_s f(u, s).
 \end{align*}
We will apply Lemma \ref{l: ODE fact 2} with $Q(u) = \lbda'(s + \delta - u)  - \lbda'(s - u)$ and $R(u) = (P_{s+\delta}-P_s)\partial_s f(u, s)$.
Note
$$|\lbda'(s+\delta-u)-\lbda'(s-u)-\lbda'(s+\delta)+\lbda'(s)|\leq 2 M \min (u^\alpha, \delta^\alpha).$$
Further 
\begin{align*}
|P_{s+\delta}-P_s|\cdot |\partial_s f(u, s)| &\leq \frac{c |f(u, s + \delta) - f(u, s)| \cdot |f(u, s) + f(u, s + \delta)|}{u^2}u^\alpha\\
 &\leq \frac{ c u \delta^\alpha \sqrt{Cu+ \e^2}}{u^2}u^\alpha\\
&\leq c \delta^\alpha u^{\alpha - 1/2} + c \delta^\alpha\e u^{\alpha - 1},
\end{align*}
and so
$$\int^u_0 |R(v)| \, dv \leq \int^u_0  \left( c \delta^\alpha v^{\alpha - 1/2} + c \delta^\alpha\e v^{\alpha - 1} \right) \, dv \leq  c\delta^\alpha u^{\alpha + 1/2} + c \delta^\alpha \frac{\e}{\alpha} u^\alpha.$$
Therefore, by Lemma \ref{l: ODE fact 2} $(iii)$ with $\w \equiv 2M \min (u^\alpha, \delta^\alpha)$, 
\begin{align*}
 |\partial_s f(u, s + \delta) - \partial_s f(u, s)| &\leq C M \min (u^\alpha, \delta^\alpha) + c\delta^\alpha u^{\alpha + 1/2} + c \delta^\alpha \frac{\e}{\alpha} u^\alpha\\
&\leq c (1+ \frac{\e}{\alpha}) \min (u^\alpha, \delta^\alpha).
\end{align*}
\end{proof}
\begin{lemma}
Suppose $\lbda\in C^{n,\alpha}([0,T];M)$ with $n\geq 2$ and $\alpha \in (0,1]$. Then
$$|R_k(u, s + \delta, \e) - R_k (u, s, \e)| \leq c \delta \sqrt{u}~~~~ \mbox{ when } ~~~~ 1\leq k\leq n-1,$$
and 
$$|\partial^k_s f(u, s + \delta,\e) - \partial^k_s f(u, s, \e)|\leq cu\delta ~~~~ \mbox{ when }~~~~ 1\leq k\leq n-2, $$ 
and 
$$|\partial^{n-1}_s f(u, s + \delta,\e) - \partial^{n-1}_s f(u, s, \e)|\leq c \min( u^\alpha \delta, u \delta^\alpha).$$
\end{lemma}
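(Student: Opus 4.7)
The proof splits according to the three estimates. For the first, I use the recursion $R_{k+1}=\partial_s R_k-\tfrac{4}{f^3}(\partial_s f)(\partial^k_s f)$ from the Remark after Lemma~\ref{l: partial^k_s f}, which I rearrange as $\partial_s R_k = R_{k+1}+\tfrac{4}{f^3}(\partial_s f)(\partial^k_s f)$. Using $|R_{k+1}|\leq c\sqrt{u}$ (valid for $k+1\leq n$ by that same remark), the lower bound $|f|\geq\sqrt{3u}$ from Lemma~\ref{l: upward ODE}, and $|\partial_s f|,|\partial^k_s f|\leq cu$ from Lemma~\ref{l: partial^k_s f}, I obtain $|\partial_s R_k|\leq c\sqrt{u}$; the mean value theorem in $s$ then yields $|R_k(u,s+\delta,\e)-R_k(u,s,\e)|\leq c\delta\sqrt{u}$ for $1\leq k\leq n-1$.

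The second estimate is a direct application of the mean value theorem: since $\partial^{k+1}_s f$ exists and satisfies $|\partial^{k+1}_s f|\leq cu$ for $k+1\leq n-1$ (Lemma~\ref{l: partial^k_s f}), we get $|\partial^k_s f(u,s+\delta,\e)-\partial^k_s f(u,s,\e)|\leq cu\delta$ for $1\leq k\leq n-2$. For the third estimate I prove the two branches of the minimum separately. The bound $cu^\alpha\delta$ is immediate from the MVT and Lemma~\ref{l: partial^n_s f}, which gives $|\partial^n_s f|\leq cu^\alpha$. For the companion bound $cu\delta^\alpha$, I mimic the approach of Lemma~\ref{l: lambda in C^1}: set $D(u)=\partial^{n-1}_s f(u,s+\delta)-\partial^{n-1}_s f(u,s)$ and derive the ODE
\[
\partial_u D = P_{s+\delta}\,D + (P_{s+\delta}-P_s)\partial^{n-1}_s f(u,s) + [\lambda^{(n)}(s+\delta-u)-\lambda^{(n)}(s-u)] + [R_{n-1}(u,s+\delta)-R_{n-1}(u,s)],
\]
with $D(0)=0$. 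The three forcing terms are bounded by $c\delta^\alpha(\sqrt{u}+\e)$, $M\delta^\alpha$, and $c\delta\sqrt{u}$ respectively, using Lemma~\ref{l: lambda in C^1} in the form $|f(u,s+\delta)-f(u,s)|\leq cu\delta^\alpha$, the bounds on $|f|$ from Lemma~\ref{l: upward ODE}, $|\partial^{n-1}_s f|\leq cu$ from Lemma~\ref{l: partial^k_s f}, the $C^\alpha$ regularity of $\lambda^{(n)}$, and the first estimate above. Applying Lemma~\ref{l: ODE fact 2}(iii) with $Q\equiv 0$ and $\omega\equiv 0$ and integrating (using $u\leq T$, $\e\leq 1$, and $\delta\leq 1$) gives $|D(u)|\leq cu\delta^\alpha$.

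The main technical care is in the $(P_{s+\delta}-P_s)\partial^{n-1}_s f$ term of the ODE: to obtain a forcing of size at most $c\delta^\alpha(\sqrt{u}+\e)$, one must select the $u\delta^\alpha$ (rather than $\delta u^\alpha$) branch of the minimum in Lemma~\ref{l: lambda in C^1}, \emph{and} pair it with the sharper Lipschitz-in-$u$ estimate $|\partial^{n-1}_s f|\leq cu$ from Lemma~\ref{l: partial^k_s f} in place of the weaker $|\partial^{n-1}_s f|\leq cu^\alpha$ from Lemma~\ref{l: partial^n_s f}. With these choices the near-singularity $|P_{s+\delta}-P_s|\leq c\delta^\alpha(\sqrt{u}+\e)/u$ is cancelled, and the resulting integral is controlled uniformly in $\e\in(0,1]$.
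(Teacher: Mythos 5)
Your proof is correct and follows essentially the same route as the paper: the recursion for $R_k$ plus integration in $s$ for the first two estimates and the $u^\alpha\delta$ branch, and the same ODE for $\partial_s^{n-1}f(u,s+\delta)-\partial_s^{n-1}f(u,s)$ with the same three forcing terms for the $u\delta^\alpha$ branch. The only (cosmetic) difference is that the paper bounds $|f(u,s+\delta)-f(u,s)|$ by $cu\delta$ and closes with Lemma~\ref{l: ODE fact 1} using the uniform forcing bound $M_1=c\delta^\alpha$, whereas you use the $cu\delta^\alpha$ branch and Lemma~\ref{l: ODE fact 2}(iii); both yield $c\,u\delta^\alpha$.
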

\begin{proof}
From the Remark following Lemma  \ref{l: partial^k_s f}, we know that 
 $R_1=0$,  $R_k$ satisfies the recursive formula:
$$R_{k+1} = \partial_s R_k - \frac{4}{f^3}(\partial_s f)(\partial^k_s f),$$
and  $|R_k| \leq c \sqrt{u}$ for $1 \leq k \leq n$.
Therefore, for $k+1\leq n$, Lemma \ref{l: partial^k_s f} implies that
\begin{align*}
|\partial_s R_k | &\leq |R_{k+1} |+\frac{4}{|f|^3}|\partial_s f|\cdot | \partial^k_s f|\\
&\leq c \sqrt{u}.
\end{align*}
Thus
$$|R_k(u, s + \delta, \e)-R_k(u, s, \e)|\leq \int^{s+\delta}_s |\partial_sR_k (u, r, \e)|dr \leq c \delta \sqrt{u},$$
proving the first statement.  

When $1 \leq k \leq n-2$, Lemma \ref{l: partial^k_s f} implies that
$$|\partial^k_s f(u,s+\delta,\e)-\partial^k_s f(u,s,\e)|\leq \int^{s + \delta}_s |\partial^{k+1}_s f(u, r, \e)| dr \leq c u \delta,$$
proving the second statement.
From Lemma \ref{l: partial^n_s f}
$$|\partial^{n-1}_s f(u, s + \delta,\e) - \partial^{n-1}_s f(u, s, \e)|\leq \int^{s + \delta}_s |\partial^n_s f(u, r, \e)| dr \leq c u^\alpha \delta.$$

To prove the third statement, it remains to show
\begin{equation} \label{third statement goal}
|\partial^{n-1}_s f(u, s + \delta, \epsilon) - \partial^{n-1}_s f(u, s, \epsilon)|\leq c \delta^\alpha u.
\end{equation}
Omitting the parameter $\e$, we have
\begin{align*}
\partial_u [\partial^{n-1}_s f(u, s + \delta) - \partial^{n-1}_s f(u, s)]  = P_{s + \delta} & [\partial^{n-1}_s f(u, s + \delta) - \partial^{n - 1}_s f(u, s)] \\
 &+ (\lbda^{(n)}(s + \delta - u)  - \lbda^{(n)}(s - u))\\
 &+ (P_{s + \delta} - P_s)\partial^{n-1}_s f(u, s) \\ &+ R_{n - 1}(u, s + \delta) - R_{n - 1}(u, s).
 \end{align*}
Since 
$$|\lbda^{(n)}(s + \delta - u)  - \lbda^{(n)}(s - u)|\leq M \delta^\alpha,$$
and
$$|P_{s+\delta} - P_s|\cdot |\partial^{n-1}_s f(u, s)|\leq \frac{c\delta u C}{u^2} u \leq c \delta \leq c \delta^\alpha, $$
and 
$$|R_{n - 1}(u, s + \delta) - R_{n - 1}(u, s)|\leq c\delta \sqrt{u} \leq c \delta^\alpha,$$
we apply Lemma \ref{l: ODE fact 1} with $M_1=c \delta^\alpha$ to prove \eqref{third statement goal}.
 \end{proof}
\begin{lemma}
Suppose $\lbda\in C^{n,\alpha}([0,T];M)$ with $n\geq 2$ and $\alpha\in (0,1]$. There exists $c=c(M,n,T)$ so that
\begin{eqnarray*}
|R_{n+1}(u, s, \e)| & \leq & cu^{\alpha-1/2}, \\
|R_n(u, s + \delta, \e) - R_n(u, s, \e)|  & \leq & cu^{\alpha-1/2}\delta,\\
|\partial^n_s f(u,s+\delta,\e) - \partial^n_s f(u,s,\e)| & \leq & c (1+ \frac{\e}{\alpha}) \min (u^\alpha, \delta^\alpha).
\end{eqnarray*}
\begin{proof}
Let's note that
$$R_n = \sum \frac{c}{f^m}\prod^{m-1}_{j=1} \partial^{m_j}_s f$$
with $3\leq m\leq n+1$, $1\leq m_j\leq n-1$, and the number of terms in the sum is no more than $n-1$. Since $\partial^n_s f$ exists, so does $R_{n+1}$:
$$R_{n+1}=\sum \frac{c}{f^m}\prod^{m-1}_{j=1} \partial^{m_j}_s f,$$
with $3\leq m\leq n+2$ and $1\leq m_j\leq n$. We can check that in each product, there is at most one $m_j=n$. Hence
$$|R_{n+1}|\leq cn\frac{u^{m-2}u^\alpha}{u^{m/2}}\leq c u^{\alpha+m/2-2}\leq c(M,n,T) u^{\alpha-1/2},$$
and
$$|\partial_s R_n|\leq |R_{n+1}|+\frac{4}{|f|^3}|\partial_s f|\cdot |\partial^n_s f| \leq cu^{\alpha-1/2}.$$
This implies that
$$|R_n(u,s+\delta)-R_n(u,s)|\leq cu^{\alpha-1/2}\delta.$$

It remains to prove the last statement.
Now we have
$$\partial_u(\partial^n_s f(u, s + \delta) + \lbda^{(n)}(s + \delta - u)) = P_{s + \delta}\partial^n_s f(u, s + \delta) + R_n(u, s + \delta),$$
and
$$\partial_u(\partial^n_s f(u, s)+\lbda^{(n)}(s - u)) = P_s \partial^n_s f(u, s) + R_n(u, s).$$
Let 
\begin{align*}
Y(u) &= \partial^n_s f(u, s + \delta) + \lbda^{(n)}(s + \delta - u)-\partial^n_s f(u, s) - \lbda^{(n)}(s - u)  \text{ and } \\
 Q(u) &=\lbda^{(n)}(s + \delta - u) -\lbda^{(n)}( s - u).
\end{align*}
Then
$$\partial_u Y = P_{s+\delta} Y - P_{s+\delta} Q + (P_{s + \delta} - P_s) \partial^n_s f(u, s) + R_n(u, s + \delta) - R_n (u, s).$$
We see that
$$|Q(u)-Q(0)|\leq c\min (u^\alpha, \delta^\alpha),$$
and
$$|(P_{s + \delta} - P_s) \partial^n_s f(u, s)|\leq \frac{cu\delta \sqrt{Cu+\e^2}}{u^2} u^\alpha \leq c \delta u^{\alpha-1/2} + c\e \delta u^{\alpha - 1}.$$
By Lemma \ref{l: ODE fact 2} $(iii)$ with $|R(u)| \leq c \delta u^{\alpha - 1/2} + c \e \delta u^{\alpha - 1}$,  
$$|\partial^n_s f(u, s + \delta, \e) - \partial^n_s f(u, s, \e)| = |Y-Q| \leq c\min (u^\alpha, \delta^\alpha) +  c \delta u^{\alpha + 1/2} + \frac{c\e \delta}{\alpha} u^\alpha. $$
\end{proof}
\end{lemma}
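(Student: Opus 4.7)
The plan is to establish the three bounds sequentially, each bootstrapping from the previous one, starting from the structural description of $R_n$ inherited from Lemma \ref{l: partial^k_s f}: $R_n$ is a sum of at most $n-1$ monomials of the shape $c f^{-m}\prod_{j=1}^{m-1}\partial^{m_j}_s f$ with $3\leq m\leq n+1$ and $1\leq m_j\leq n-1$. For the pointwise bound on $|R_{n+1}|$, I apply the recursion $R_{n+1}=\partial_s R_n - 4 f^{-3}(\partial_s f)(\partial^n_s f)$ to this expansion. Using $\partial_s f^{-m}=-m f^{-m-1}\partial_s f$ and the Leibniz rule, each monomial of $\partial_s R_n$ has the same template but with $3\leq m\leq n+2$ and $1\leq m_j\leq n$, with the crucial combinatorial observation that in each resulting product at most one $m_j$ can equal $n$ (a single $\partial_s$ raises exactly one index by one from a profile capped at $n-1$). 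The subtracted piece $-4f^{-3}(\partial_s f)(\partial^n_s f)$ fits the same template. Bounding the (at most one) top factor by $|\partial^n_s f|\leq cu^\alpha$ via Lemma \ref{l: partial^n_s f}, the remaining $m-2$ factors by $|\partial^{m_j}_s f|\leq cu$ via Lemma \ref{l: partial^k_s f}, and $|f|^{-m}\leq cu^{-m/2}$ via Lemma \ref{l: upward ODE}(ii), every term is dominated by $u^{-m/2}\cdot u^{m-2}\cdot u^\alpha = u^{\alpha+m/2-2}\leq cu^{\alpha-1/2}$, since $m\geq 3$.

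For the oscillation $|R_n(u,s+\delta,\e)-R_n(u,s,\e)|$, I observe that the recursion already gives $\partial_s R_n = R_{n+1}+4f^{-3}(\partial_s f)(\partial^n_s f)$, and the second summand is bounded by $cu^{-3/2}\cdot u\cdot u^\alpha=cu^{\alpha-1/2}$, so $|\partial_s R_n|\leq cu^{\alpha-1/2}$. Integrating in the $s$-variable across $[s,s+\delta]$ yields the promised bound $cu^{\alpha-1/2}\delta$.

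For the last inequality I mirror the setup from Lemma \ref{l: partial^n_s f} and work with the \emph{modified} quantity $\partial^n_s f(u,s,\e)+\lbda^{(n)}(s-u)$, which satisfies the clean ODE $\partial_u(\partial^n_s f+\lbda^{(n)}(s-u))=P_s\,\partial^n_s f + R_n$ (avoiding the non-existent $\lbda^{(n+1)}$). Define
\[
Y(u)=\partial^n_s f(u,s+\delta,\e)+\lbda^{(n)}(s+\delta-u)-\partial^n_s f(u,s,\e)-\lbda^{(n)}(s-u), \qquad Q(u)=\lbda^{(n)}(s+\delta-u)-\lbda^{(n)}(s-u),
\]
so that $Y(0)=Q(0)$ by Lemma \ref{l: upward ODE}(v). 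Subtracting the two modified ODEs gives $\partial_u Y=P_{s+\delta}Y-P_{s+\delta}Q+(P_{s+\delta}-P_s)\partial^n_s f(u,s)+R_n(u,s+\delta)-R_n(u,s)$, which is exactly the form required by Lemma \ref{l: ODE fact 2}(iii) with $\omega(u)=c\min(u^\alpha,\delta^\alpha)$ (from $\lbda^{(n)}\in C^\alpha$). The forcing $R(u)=(P_{s+\delta}-P_s)\partial^n_s f(u,s)+R_n(u,s+\delta)-R_n(u,s)$ is controlled using $|P_{s+\delta}-P_s|\leq c\delta(u^{-1/2}+\e u^{-1})$ (from Lemma \ref{l: lambda in C^1} applied with $\alpha=1$ since $\lbda\in C^{1,1}$, together with the lower bound $|f|\geq c\sqrt{u}$), the bound $|\partial^n_s f(u,s)|\leq cu^\alpha$, and the previously-proved oscillation of $R_n$. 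These combine to give $|R(v)|\leq c\delta v^{\alpha-1/2}+c\e\delta v^{\alpha-1}$; integrating from $0$ to $u$ produces $c\delta u^{\alpha+1/2}+c(\e/\alpha)\delta u^\alpha$. Absorbing $u,\delta\leq T$ into the constant converts both remainders into $c(1+\e/\alpha)\min(u^\alpha,\delta^\alpha)$, as claimed.

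The main obstacle is the combinatorial bookkeeping in the first step, in particular verifying that a single application of $\partial_s$ to $R_n$ cannot produce any monomial with two factors of order $n$; this is what keeps the dominant contribution to $R_{n+1}$ at the optimal size $u^{\alpha-1/2}$ and makes the whole chain of estimates go through. The secondary subtlety is tracking the $\e/\alpha$ factor: it is forced on us by the non-integrable singularity $v^{\alpha-1}$ appearing in the forcing when $\alpha$ is small and cannot be avoided without stronger assumptions on $\lbda$.
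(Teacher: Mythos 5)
Your proposal is correct and follows essentially the same route as the paper's proof: the same structural expansion of $R_{n+1}$ with the "at most one $m_j=n$" observation, the same integration of $\partial_s R_n = R_{n+1}+4f^{-3}(\partial_s f)(\partial^n_s f)$ for the oscillation of $R_n$, and the same modified ODE for $Y$ and $Q$ fed into Lemma \ref{l: ODE fact 2}(iii) with the forcing split into $c\delta v^{\alpha-1/2}+c\e\delta v^{\alpha-1}$. Your explicit inclusion of the $R_n$-oscillation term in the forcing and the final absorption of $\delta u^{\alpha+1/2}$ and $(\e/\alpha)\delta u^\alpha$ into $c(1+\e/\alpha)\min(u^\alpha,\delta^\alpha)$ only make explicit what the paper leaves implicit.
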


\begin{lemma} \label{l: mix_u_s}
 (Boundedness of mixed $u$ and $s$ derivatives.) 
 Suppose $\lbda\in C^n([0,T];M)$. Let $s_0\in (0,T)$ and $D_0=\{ (u,s)\in D: s_0\leq u\}$. 
 There exists $L_0=L_0(M,n,T,s_0)$ such that for all $l+k\leq n$, 
$$|\partial^l_u \partial^k_s f( u, s, \e)| \leq L_0.$$ 
In other words, $f \in C^n(D_0; L_0)$ for every $\e\in (0,1].$

\end{lemma}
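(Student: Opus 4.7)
The plan is to induct on the order $l$ of the $u$-derivative, using the defining ODE $\partial_u f = -2/f + \lambda'(s-u)$ to trade each $u$-derivative for an expression involving $1/f$, higher derivatives of $\lambda$, and lower-order mixed derivatives of $f$. The crucial analytic input is that on $D_0$ we have the uniform lower bound $|f(u,s,\e)| \geq \sqrt{3u} \geq \sqrt{3 s_0}$ from Lemma \ref{l: upward ODE}(ii), so $1/f$ is bounded above uniformly in $\e \in (0,1]$. This is precisely the estimate that breaks down at $u = 0$ and forces the domain to be $D_0$ rather than all of $D$.

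The base case $l = 0$ amounts to assembling bounds already in hand: Lemma \ref{l: upward ODE}(ii) handles $k = 0$, Lemma \ref{l: partial^k_s f} handles $1 \leq k \leq n-1$ (giving $|\partial_s^k f| \leq c u \leq c T$), and the remark following Lemma \ref{l: partial^n_s f} handles $k = n$ (giving $|\partial_s^n f| \leq c M$). These bounds are in fact valid on all of $D$, not just $D_0$. For the inductive step, suppose the bound has been established for every $(l', k')$ with $l' \leq l - 1$ and $l' + k' \leq n$. Fix $l \geq 1$ and $k$ with $l + k \leq n$; then $k \leq n - 1$, so Lemma \ref{l: upward ODE}(iv) lets us freely exchange the order of differentiation. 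Applying $\partial_u^{l-1} \partial_s^k$ to both sides of \eqref{e: ODE} gives
\begin{equation*}
\partial_u^l \partial_s^k f \;=\; \partial_u^{l-1} \partial_s^k \!\left( -\frac{2}{f} \right) + (-1)^{l-1} \lambda^{(l+k)}(s-u).
\end{equation*}
The $\lambda$-term is bounded by $M$. Expanding the first term by Fa\`a di Bruno's formula yields a finite sum of products of the form $c_\sigma \, f^{-m_\sigma} \prod_i \partial_u^{l_i} \partial_s^{k_i} f$, where $\sum_i l_i = l - 1$ and $\sum_i k_i = k$. Each factor $\partial_u^{l_i} \partial_s^{k_i} f$ satisfies $l_i \leq l - 1$ and $l_i + k_i \leq l + k - 1 \leq n - 1$, so is bounded by the inductive hypothesis, while $f^{-m_\sigma}$ is bounded on $D_0$ by $(3 s_0)^{-m_\sigma/2}$. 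Summing the finitely many terms gives the uniform bound $L_0 = L_0(M, n, T, s_0)$.

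The main obstacle is really just the combinatorial bookkeeping of the Fa\`a di Bruno expansion; the key observation making it work is that $\partial_u^{l-1} \partial_s^k$ can distribute at most $l - 1$ copies of $\partial_u$ and at most $k$ copies of $\partial_s$ across the factors of $f$, so every $f$-derivative appearing has total order at most $l + k - 1 \leq n - 1$ and is controlled by the inductive hypothesis. No genuinely new estimate is required beyond the pointwise bounds already established in Lemmas \ref{l: upward ODE}, \ref{l: partial^k_s f}, and \ref{l: partial^n_s f}, together with the lower bound on $|f|$ that is the defining feature of $D_0$.
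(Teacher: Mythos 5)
Your proof is correct and follows essentially the same route as the paper: induction on the order of the $u$-derivative, using the ODE to convert $\partial_u^l\partial_s^k f$ into inverse powers of $f$ times lower-order derivatives, with the lower bound $|f|\geq\sqrt{3u}\geq\sqrt{3s_0}$ on $D_0$ supplying the uniform control. The paper writes out only the case $k=0$ and declares the mixed case "similar," whereas you carry out the general mixed case via Fa\`a di Bruno directly, but the underlying argument is the same.
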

\begin{proof}
	The case  $l=0$ and $k\leq n$ is proven by Lemmas \ref{l: partial^k_s f} and \ref{l: partial^n_s f}. Consider $k=0$ and $1\leq l\leq n$. We have 
$$\partial_u f = \frac{-2}{f} + \lbda'(s-u).$$
This implies that when $u_0\leq u$,
$$|\partial_u f|\leq \frac{2}{C\sqrt{u}} + M \leq L_0.$$
We can show by induction in $l$ that
$$\partial^l_u f = \frac{2}{f^2} \partial^{l-1}_u f +(-1)^{l-1} \lbda^{(l)}(s-u) + \hat{R}_l,$$
where $\hat{R}_l$ is the sum of a finite number (depending on $l$) of terms of the form
$$\frac{c}{f^m} \prod^{m-1}_{j=1} \partial^{m_j}_u f$$
with $3\leq m\leq l-1$ and $1\leq m_j\leq l-2$. Hence by induction $|\partial^l_u f|\leq L_0$ for $s_0\leq u\leq T$. The other cases $1\leq k\leq n-1$ are proved similarly.

\end{proof}

In summary, we have proved the following results about $f(u, s, \e)$:

\begin{proposition} \label{summary}
If $\l$ is in $C^{n, \alpha}[0,T]$, then
$f(u, s, \e)$ satisfies the following properties:
\begin{itemize}
\item $C\sqrt{u+\e^2}\leq |f(u, s, \e)|\leq C'\sqrt{u} + C'\e$.  
\smallskip
 \item $|\partial^k_s f(u, s, \e)|\leq cu$ for $1\leq k\leq n-1$.
\smallskip
\item $|\partial^n_s f(u, s, \e)|\leq cu^\alpha$.
\smallskip
\item $|\partial^k_s f(u, s + \delta, \e) - \partial^k_s f(u, s, \e)|\leq c u \delta$ for $1 \leq k\leq n-2$.
\smallskip
\item $|\partial^{n-1}_s f(u, s + \delta, \e) - \partial^{n-1}_s f(u, s, \e)|\leq c\min (u\delta^\alpha, u^\alpha \delta)$ if $0\leq n-1$.
\smallskip
\item $|\partial^n_s f(u, s + \delta, \e) - \partial^n_s f(u, s, \e)| \leq c (1 + \frac{\e}{\alpha}) \min (u^\alpha, \delta^\alpha)$ for $1\leq n$. 
\smallskip
\item For every $0<s_0<T$, there exists $L_0=L_0(M, n, T, s_0)$ such that for all $l+k\leq n$, $|\partial^l_u \partial^k_s f( u, s, \e)| \leq L_0$.
\end{itemize}
\end{proposition}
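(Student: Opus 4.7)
The proposition is already the compiled list of estimates that were established by the lemmas in this section, so my plan is essentially an audit: for each bullet I match it to the lemma(s) that proved it, then point out the one small consolidation that is needed to bridge the hypothesis $\lbda \in C^{n,\alpha}$ (a single smoothness assumption) with the fact that some of the lemmas in the section were stated under $C^n$ or under $C^{1,\alpha}$ in isolation.

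First, the two-sided bound on $|f|$ is immediate from parts (i)--(ii) of Lemma~\ref{l: upward ODE}: the upper estimate $\I f \leq \sqrt{4u+\e^2}$ combined with $|\R f| \leq \sqrt{u}$ gives $|f|\leq C'(\sqrt u + \e)$, while the lower estimate $\I f \geq \sqrt{3u+\e^2}$ gives $|f|\geq C\sqrt{u+\e^2}$. The derivative bounds in the second and third bullets are exactly the content of Lemma~\ref{l: partial^k_s f} (applied for $1\leq k\leq n-1$) and Lemma~\ref{l: partial^n_s f} respectively; the latter uses the hypothesis on the oscillation of $\lbda^{(n)}$ that comes with the $C^{n,\alpha}$ assumption.

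Next I would dispatch the oscillation estimates. The fourth bullet, $|\partial^k_s f(u,s+\delta)-\partial^k_s f(u,s)|\leq cu\delta$ for $1\leq k\leq n-2$, follows from the pointwise bound $|\partial^{k+1}_s f|\leq cu$ of Lemma~\ref{l: partial^k_s f} and the fundamental theorem of calculus in $s$, which is exactly how it was derived in the unlabeled lemma between Lemmas~\ref{l: partial^n_s f} and~\ref{l: mix_u_s}. The fifth bullet splits into two cases: when $n\geq 2$, the inequality is the third statement of that same unlabeled lemma (the $u\delta^\alpha$ side comes from the ODE argument with $M_1 = c\delta^\alpha$, the $u^\alpha\delta$ side from integrating Lemma~\ref{l: partial^n_s f}); when $n=1$, it reduces to the second inequality of Lemma~\ref{l: lambda in C^1}, which already gives the $\min(u^\alpha,\delta^\alpha)$ form (and hence the $\min(u\delta^\alpha, u^\alpha\delta)$ form is weaker). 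The sixth bullet is the last statement of the final lemma in the oscillation sequence, obtained by applying Lemma~\ref{l: ODE fact 2}(iii) to the difference $\partial^n_s f(u,s+\delta)-\partial^n_s f(u,s)$, with $Q$ encoding the $\lbda^{(n)}$-difference and $R$ absorbing both the $(P_{s+\delta}-P_s)\partial^n_s f$ term and $R_n(u,s+\delta)-R_n(u,s)$.

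Finally, the seventh bullet is exactly Lemma~\ref{l: mix_u_s}: the constant $L_0$ is constructed there by induction in $l$, starting from $|\partial_u f|\leq 2/(C\sqrt{s_0})+M$ on $D_0=\{s_0\leq u\}$ and using the recursion $\partial^l_u f = (2/f^2)\partial^{l-1}_u f + (-1)^{l-1}\lbda^{(l)}(s-u) + \widehat R_l$ combined with the previously established $s$-derivative bounds. The only thing to observe to turn this list into a clean proof is that all the cited lemmas hold under $\lbda\in C^{n,\alpha}$ (since $C^{n,\alpha}\subset C^n\subset C^{n-1,1}$, and the $\alpha$-dependence only appears in the bullets where it is written explicitly); no further argument is required, and I expect no step to be a real obstacle since each bullet has a direct antecedent.
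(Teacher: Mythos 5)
Your overall approach is exactly the paper's: Proposition~\ref{summary} is introduced with ``In summary, we have proved the following,'' so its proof really is the audit you perform, bullet by bullet, against Lemma~\ref{l: upward ODE}, Lemmas~\ref{l: partial^k_s f}--\ref{l: partial^n_s f}, the two unlabeled oscillation lemmas, and Lemma~\ref{l: mix_u_s}. Six of your seven attributions are correct, including the observation that the first bullet is just Lemma~\ref{l: upward ODE}(ii) combined via $|f|^2 = (\R f)^2 + (\I f)^2$.

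The one slip is in your handling of the fifth bullet when $n=1$. There $\partial^{n-1}_s f = f$ itself, so the relevant estimate is the \emph{first} inequality of Lemma~\ref{l: lambda in C^1}, namely $|f(u,s+\delta,\e)-f(u,s,\e)|\leq c\min(u\delta^\alpha,\delta u^\alpha)$, which gives the claimed bound verbatim. You instead cite the \emph{second} inequality, which concerns $\partial_s f$ (that one is what settles the sixth bullet when $n=1$), and you assert that the $\min(u\delta^\alpha,u^\alpha\delta)$ form is weaker than $\min(u^\alpha,\delta^\alpha)$. That comparison goes the wrong way: for $u,\delta\leq 1$ one has $u\delta^\alpha\leq\delta^\alpha$ and $u^\alpha\delta\leq u^\alpha$, so $\min(u\delta^\alpha,u^\alpha\delta)\leq\min(u^\alpha,\delta^\alpha)$ and the bullet's bound is the \emph{stronger} one; it cannot be deduced from the $\min(u^\alpha,\delta^\alpha)$ estimate. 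The fix is only to swap the citation, since the correct inequality is already in the paper, but as written that step of your argument does not go through.
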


We emphasize that $c$ depends only on $M,n,T$, not on $\alpha$ and $\e$. We know from Lemma \ref{l: upward ODE} that $f(u,s,\e)$ converges uniformly in $D$ to $f(u,s)$ as $\e \to 0^+$. For all $l+k=n$, it follows from the proof of previous lemmas that $\partial^l_u \partial^k_s f(u,s,\e)$ can be expressed in terms of lower derivatives in $u$ and $s$ of $f(u,s,\e)$. Therefore in $D_0=\{(u,s)\in D: 0<s_0\leq u\leq s\leq T\}$, $\partial^l_u \partial^k_s f(u,s,\e)$ converges uniformly. This implies the following:

\begin{corollary} \label{sumry-prop}
If $\l$ is in $C^{n, \alpha}[0,T]$, then
 $f(u,s)$ is in $C^n (D_0)$ and satisfies 

\begin{itemize}
\item $C\sqrt{u}\leq |f(u, s)|\leq C'\sqrt{u}$.  
\smallskip
\item $|\partial^k_s f(u, s)|\leq cu$ for $1\leq k\leq n-1$.
\smallskip
\item $|\partial^n_s f(u, s)|\leq cu^\alpha$.
\smallskip
\item $|\partial^k_s f(u, s + \delta) - \partial^k_s f(u, s)|\leq c u \delta$ for $1 \leq k\leq n-2$.
\smallskip
\item $|\partial^{n-1}_s f(u, s + \delta) - \partial^{n-1}_s f(u, s)|\leq c\min (u\delta^\alpha, u^\alpha \delta)$ if $0\leq n-1$.
\smallskip
\item $|\partial^n_s f(u, s + \delta) - \partial^n_s f(u, s)| \leq c \min (u^\alpha, \delta^\alpha)$ for $1\leq n$. 
\smallskip
\item For every $0<s_0<T$, there exists $L_0=L_0(M, n, T, s_0)$ such that for all $l+k\leq n$, $|\partial^l_u \partial^k_s f( u, s)| \leq L_0$.
\smallskip
\end{itemize}
\end{corollary}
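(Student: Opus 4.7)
\emph{Plan.} The corollary is a limiting statement: it transfers the estimates on $f(u,s,\e)$ from Proposition \ref{summary} to their limit $f(u,s)$ as $\e \to 0^+$. The plan has two parts: (i) establish uniform convergence on the compact region $D_0$ of $f(u,s,\e)$ together with each of its partial derivatives $\partial_u^l \partial_s^k f(u,s,\e)$ of order $l+k \leq n$; and (ii) pass to the limit $\e \to 0^+$ pointwise in each of the inequalities listed in Proposition \ref{summary}.

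For (i), uniform convergence of $f(u,s,\e)$ to $f(u,s)$ on all of $D$ is already given by Lemma \ref{l: upward ODE}(iii). Lemma \ref{l: mix_u_s} shows that $|\partial_u^l\partial_s^k f(u,s,\e)|\leq L_0(M,n,T,s_0)$ on $D_0$ uniformly in $\e \in (0,1]$, and Lemma \ref{l: upward ODE}(ii) yields the uniform lower bound $|f(u,s,\e)|\geq \sqrt{3u}\geq \sqrt{3s_0}$ on $D_0$, so that $1/f$ (and all positive powers thereof) stay uniformly bounded in $\e$ on $D_0$. I will induct on $l+k$. At each step, the proofs of Lemmas \ref{l: partial^k_s f}--\ref{l: mix_u_s} give explicit recursive formulas writing $\partial_u^l\partial_s^k f(u,s,\e)$ as a continuous (in fact rational) expression in $f$, lower-order partials of $f$, and the derivatives $\lambda^{(j)}(s-u)$. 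Since the lower-order partials converge uniformly on $D_0$ by the inductive hypothesis, and since $1/f$ is uniformly bounded and Lipschitz in $f$ on the range $|f|\geq \sqrt{3s_0}$, these recursions transfer uniform convergence upward. A standard theorem (uniform convergence of a function plus uniform convergence of its derivative implies differentiability of the limit with the expected derivative) then identifies the limit with $\partial_u^l\partial_s^k f(u,s)$, giving $f\in C^n(D_0)$.

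For (ii), observe that the constants $c$ in Proposition \ref{summary} depend only on $M,n,T$, except for the final oscillation estimate for $\partial_s^n f$, which carries the factor $(1+\e/\alpha)$. Pointwise passage to the limit $\e\to 0^+$ (justified by the uniform convergence in (i)) then reproduces the corresponding estimate on $f(u,s)$: the factor $(1+\e/\alpha)$ collapses to $1$, matching the bound stated in Corollary \ref{sumry-prop}. The two-sided bound $C\sqrt{u}\leq |f(u,s)|\leq C'\sqrt{u}$ on $D_0$ follows from the sandwich $\sqrt{3u+\e^2}\leq \I f(u,s,\e)$ and $|f(u,s,\e)|\leq |\R f|+|\I f|\leq \sqrt{u}+\sqrt{4u+\e^2}$ by sending $\e\to 0^+$.

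The main obstacle is step (i): propagating uniform $\e$-convergence through the recursions for all partials up to order $n$. The recursions involve negative powers of $f$, which is exactly why the statement is confined to $D_0$ (where $u\geq s_0>0$ ensures $|f|$ is bounded below). No new delicate estimate beyond those already proved is required; the work is to organize the induction so that the recursions from Lemmas \ref{l: partial^k_s f}--\ref{l: mix_u_s} can be invoked uniformly in $\e$ on $D_0$.
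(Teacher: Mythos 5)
Your proposal is correct and follows essentially the same route as the paper, which disposes of this corollary in the paragraph preceding it: uniform convergence of $f(u,s,\e)$ from Lemma \ref{l: upward ODE}(iii), the observation that the higher-order partials are expressed recursively in terms of lower-order ones (with $1/f$ uniformly controlled on $D_0$ since $u\geq s_0$), hence uniform convergence of all partials of order $\leq n$ on $D_0$, and then passage to the limit in the estimates of Proposition \ref{summary} (including the collapse of the $(1+\e/\alpha)$ factor). Your write-up merely makes explicit the induction and the standard limit-of-derivatives theorem that the paper leaves implicit.
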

The first three properties of the corollary will help to show that we can take derivatives of the integral term in the formula (\ref{e: gamma''}). The next three properties will be used to estimate the H\"older norm of the derivatives.
\begin{corollary} If $\lbda$ is in  $C^{n,\alpha}  [0,T] $ with $n\geq 2$ and $\alpha \in (0,1]$, then $\g$ is in $C^n (0,T] $.
\end{corollary}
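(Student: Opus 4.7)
The plan is to induct on $n$, taking as the base case $n=2$ the result \cite[Theorem 6.2]{C}, which also supplies the identity \eqref{e: gamma''}. For the inductive step with $n\ge 3$, assume the corollary for $n-1$; since $\lambda\in C^{n,\alpha}[0,T]$ forces $\lambda\in C^{n-1,1}[0,T]$, the inductive hypothesis gives $\gamma\in C^{n-1}(0,T]$. Fix $s_0\in(0,T)$; it then suffices to show $\gamma''\in C^{n-2}([s_0,T])$. From
\[
\gamma''(s) \;=\; \frac{2\gamma'(s)}{\gamma(s)^2} \;-\; 4\gamma'(s)\,I(s),\qquad I(s):=\int_{0}^{s}\frac{\partial_s f(u,s)}{f(u,s)^{3}}\,du,
\]
and the observation that $\gamma'\in C^{n-2}$ while $\gamma(s)$ is bounded away from $0$ on $[s_0,T]$ (being in $\mathbb{H}$), the first term is already $C^{n-2}$. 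The problem therefore reduces to showing $I\in C^{n-2}([s_0,T])$.

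Next I would split $I=I_1+I_2$ with $I_1(s)=\int_{0}^{s_0/2}F(u,s)\,du$ and $I_2(s)=\int_{s_0/2}^{s}F(u,s)\,du$, where $F:=\partial_s f/f^{3}$. On the region $D_0=\{(u,s):s_0/2\le u\le s\le T\}$, Corollary \ref{sumry-prop} says $f\in C^{n}$ with $|f|$ bounded below, so $F\in C^{n-1}(D_0)$; a Leibniz-type formula then writes $I_2^{(k)}$ for $k\le n-2$ as a finite sum of continuous boundary contributions $(\partial_u^{a}\partial_s^{b}F)(s,s)$ with $a+b\le k-1$, together with the continuous integral $\int_{s_0/2}^{s}\partial_s^{k}F(u,s)\,du$, so $I_2\in C^{n-2}([s_0,T])$. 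To handle $I_1$, the goal is to differentiate under the integral sign. A combinatorial induction on $k$ expresses $\partial_s^{k}F(u,s)$ as a finite sum of terms
\[
\frac{c\,\prod_{j=1}^{N}\partial_s^{m_j}f(u,s)}{f(u,s)^{N+2}},\qquad \sum_{j=1}^{N} m_j=k+1,\ N\ge 1.
\]
For $k\le n-2$ every exponent satisfies $m_j\le n-1$, so Corollary \ref{sumry-prop} yields $|\partial_s^{m_j}f|\le c\,u$ and $|f|\ge C\sqrt{u}$; hence each summand is bounded by $c\,u^{N/2-1}$, with the worst case $N=1$ giving the integrable $c\,u^{-1/2}$ on $[0,s_0/2]$, uniformly in $s\in[s_0,T]$.

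The main obstacle is exactly this integrability near $u=0$: the $|f|^{-3}\sim u^{-3/2}$ singularity must be absorbed by the $O(u)$ vanishing of the $s$-derivatives of $f$, and I expect the careful bookkeeping of the combinatorial expansion of $\partial_s^{k}F$ to be the fiddliest step. It is crucial here that the range $1\le m_j\le n-1$ keeps us within the regime where Corollary \ref{sumry-prop} gives the sharp $O(u)$ bound, rather than the weaker $O(u^{\alpha})$ bound valid only at order $n$. Once the uniform dominant $u^{N/2-1}$ is in hand, the dominated convergence theorem, together with joint continuity of the integrand on $\{u>0\}$, legitimizes both differentiating $I_1$ under the integral up to order $n-2$ and the continuity of $\partial_s^{k}I_1$ in $s$, completing the induction.
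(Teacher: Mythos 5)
Your proof is correct, but it takes a genuinely different (and considerably longer) route than the paper's. The paper disposes of this corollary in one line: by Corollary \ref{sumry-prop} (whose last bullet, coming from Lemma \ref{l: mix_u_s}, asserts that all mixed partials $\partial^l_u\partial^k_s f$ with $l+k\le n$ exist and are bounded on $D_0$), the function $f(u,s)=\g(s-u,s)$ is in $C^n(D_0)$; since the diagonal $\{(s,s):s_0\le s\le T\}$ lies in $D_0$ and $\g(s)=f(s,s)+\lbda(0)$, the curve is the restriction of a $C^n$ function of two variables to a smooth path in $D_0$ and is therefore $C^n$. You instead induct on $n$ and push $s$-derivatives through the integral in \eqref{e: gamma''}, isolating the singular piece near $u=0$ and dominating the integrand by $c\,u^{N/2-1}$ via $|\partial^{m_j}_s f|\le cu$ and $|f|\ge C\sqrt u$; this is essentially a qualitative shadow of the paper's proof of Theorem \ref{t: quantitative} in Section \ref{smoothness section}, where the same bookkeeping (the recursion $F_k=\partial^k_s f/f^3+\hat R_k$, the diagonal boundary terms, the $u^{-1/2}$ integrability) is carried out with sharp H\"older exponents. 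What your route buys is that the delicate integral near $u=0$ needs only the pure $s$-derivative estimates of Corollary \ref{sumry-prop}; what it costs is the induction, the base case imported from \cite{C}, and a differentiation-under-the-integral argument that the paper's diagonal observation makes unnecessary. Note also that your treatment of $I_2$ still invokes the mixed $u$ and $s$ derivative bounds (Lemma \ref{l: mix_u_s}) for the boundary contributions $(\partial_u^a\partial_s^bF)(s,s)$, so you do not actually avoid that ingredient; both arguments ultimately rest on the same Corollary \ref{sumry-prop}.
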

\begin{proof}  The previous arguments imply that $\g(s-u,s)\in C^n(D_0)$ for every $s_0\in (0,T)$. Hence $s\mapsto \g(0,s)\in C^n (0,T] $. Since $\g(s) = \g(0,s) + \lbda(0)$, the curve $\g$ is in $C^n (0,T]$. 
\end{proof}

\section{Smoothness of $\g$}\label{smoothness section}
The goal of this section is to prove the following quantitative version of Theorem \ref{t: main theorem 1}.

\begin{theorem}
\label{t: quantitative}
Suppose $\lbda\in C^{n,\alpha}([0,T];M)$ with $n\geq 2$ and $\alpha \in (0,1]$.

(i) If $\alpha < 1/2$, then $\gamma \in C^{n, \alpha + 1/2} (0, T] $. For every $0 < s_0 < T$, there exists $c_0 = c_0 (M,n,T,s_0)$ such that $\g\in C^n( [s_0,T];c_0)$ and
$$|\g^{(n)}(s + \delta) - \g^{(n)}(s)| \leq \frac{c_0}{1 - 2 \alpha} \delta^{\alpha + 1/2},$$

(ii) If $\alpha = 1/2$, then $\gamma \in \Lambda_*^n(0,T] $. For every $0< s_0 <T$, there exists $c_0=c_0 (M,n,T,s_0)$ such that $\g\in C^n([s_0,T];c_0)$ and
$$|\g^{(n)}(s + \delta) + \g^{(n)}(s - \delta)- 2\g^{(n)}(s)| \leq c_0 \delta.$$

(iii) If $\alpha\in (\frac{1}{2},1]$, then $\g\in C^{n+1,\alpha - 1/2}(0,T]$. For every $0< s_0 <T$,  there exists $c_0=c_0(M, n, T, s_0)$ such that $\g\in C^{n+1}([s_0,T];c_0)$ and
 $$|\g^{(n + 1)}(s + \delta) - \g^{(n + 1)}(s)| \leq \frac{c_0}{2\alpha - 1} \delta^{\alpha - 1/2}.$$
\end{theorem}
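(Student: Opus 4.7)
The plan is to extend formula (\ref{e: gamma''}) by induction. Writing $\psi(u,s) = \partial_s f(u,s)/f(u,s)^3$, I would show inductively for $0 \leq k \leq n-2$ that
$$\g^{(k+2)}(s) = A_k(s) - 4\g'(s)\int_0^s \partial_s^{k}\psi(u,s)\,du,$$
where $A_k(s)$ is an algebraic expression in $\g^{(j)}(s)$, $j \leq k+1$, and in the boundary values $\partial_s^{j}\psi(s,s)$ that arise from the Leibniz rule $\frac{d}{ds}\int_0^s G = G(s,s) + \int_0^s \partial_s G$. Since the boundary $u = s \geq s_0 > 0$ stays in the interior region $D_0$, Lemma \ref{l: mix_u_s} gives that each $A_k$ is well-defined and $C^1([s_0,T])$ with constants depending only on $M,n,T,s_0$.

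For the integral term, I would expand $\partial_s^{k}\psi$ via the general product rule as a finite sum of monomials of the form $f^{-m}\prod_i \partial_s^{a_i}\!f$ with $\sum_i a_i = k+1$ and each $a_i \geq 1$. Applying the bounds of Corollary \ref{sumry-prop}, namely $|f| \sim \sqrt u$, $|\partial_s^{j}\!f| \leq cu$ for $1 \leq j \leq n-1$, and $|\partial_s^{n}\!f| \leq cu^\alpha$, each summand admits an integrable majorant of the form $c\,u^{-1/2}$ whenever $k \leq n-2$. This justifies differentiating under the integral sign in the induction step and gives $\g \in C^n((0,T])$ with bound $c_0$ on $[s_0,T]$.

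For the oscillation estimates, I would split
\begin{align*}
\g^{(n)}(s+\delta) - \g^{(n)}(s) &= [A_{n-2}(s+\delta) - A_{n-2}(s)] \\
&\quad - 4[\g'(s+\delta) - \g'(s)]\int_0^{s+\delta}\partial_s^{n-2}\psi(u,s+\delta)\,du \\
&\quad - 4\g'(s)\int_s^{s+\delta}\partial_s^{n-2}\psi(u,s+\delta)\,du \\
&\quad - 4\g'(s)\int_0^s \bigl[\partial_s^{n-2}\psi(u,s+\delta) - \partial_s^{n-2}\psi(u,s)\bigr]du.
\end{align*}
The first three pieces are each $O(\delta)$ from $A_{n-2},\g' \in C^1([s_0,T])$ together with Lemma \ref{l: mix_u_s}. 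The decisive estimate is on the last integral: expanding the integrand into a telescoping sum of monomial differences and invoking the oscillation bounds of Corollary \ref{sumry-prop}, chiefly $|\partial_s^{n}\!f(u,s+\delta) - \partial_s^{n}\!f(u,s)| \leq c\min(u^\alpha,\delta^\alpha)$, the principal contribution reduces to
$$\int_0^s \frac{\min(u^\alpha,\delta^\alpha)}{u^{3/2}}\,du = \int_0^\delta u^{\alpha-3/2}\,du + \delta^\alpha\int_\delta^s u^{-3/2}\,du.$$
For $\alpha < 1/2$ both pieces contribute at order $\delta^{\alpha+1/2}/(1-2\alpha)$, giving (i). For $\alpha = 1/2$ the split integral diverges logarithmically, so I would replace the one-sided difference by the symmetric second difference $\g^{(n)}(s+\delta) + \g^{(n)}(s-\delta) - 2\g^{(n)}(s)$; the divergent logarithm cancels by symmetry and what remains is $O(\delta)$, giving (ii). For $\alpha > 1/2$ the first integral is of order $\delta^{\alpha-1/2}/(2\alpha-1)$ and dominates, and the integrand for $\partial_s^{n-1}\psi$ is integrable at $u=0$; hence $\g^{(n+1)}(s) = \lim_{\delta \to 0^+}[\g^{(n)}(s+\delta)-\g^{(n)}(s)]/\delta$ exists as a convergent integral, and rerunning the split-integral argument one level higher yields (iii).

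The main obstacle is the bookkeeping in tracking how each monomial in the expansion of $\partial_s^{n-2}\psi$ pairs with the correct bound from Corollary \ref{sumry-prop}. The Zygmund case $\alpha = 1/2$ is the most delicate, because the needed cancellation in the symmetric second difference must be verified across all summands of the expansion rather than just on the single leading logarithmically divergent contribution.
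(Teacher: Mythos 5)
Your overall strategy is the one the paper itself follows: differentiate the integral representation of $\g''$ repeatedly, collect the diagonal boundary terms (controlled on $D_0$ by Lemma \ref{l: mix_u_s}), expand the integrand into monomials $f^{-m}\prod_i\partial_s^{a_i}f$, and estimate the increment of the remaining singular integral by splitting at $u=\delta$. The structural bookkeeping you describe, and the four-piece decomposition of $\g^{(n)}(s+\delta)-\g^{(n)}(s)$, are sound.

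However, the central displayed estimate is wrong in cases (i) and (ii). At the level of $\g^{(n)}$ the integrand is $\partial_s^{n-2}\psi$ (your notation), whose monomials contain $s$-derivatives of $f$ of order at most $n-1$ (since $\sum_i a_i=n-1$); the relevant oscillation bound from Corollary \ref{sumry-prop} is therefore $|\partial_s^{n-1}f(u,s+\delta)-\partial_s^{n-1}f(u,s)|\le c\min(u\delta^\alpha,u^\alpha\delta)$, not the bound $c\min(u^\alpha,\delta^\alpha)$ for $\partial_s^{n}f$ that you invoke. With the bound you use, the integral you display does not give the claimed order: for $\alpha\le 1/2$ the piece $\int_0^\delta u^{\alpha-3/2}\,du$ diverges, and $\delta^\alpha\int_\delta^s u^{-3/2}\,du$ is of order $\delta^{\alpha-1/2}$, which blows up as $\delta\to0$ rather than being $O(\delta^{\alpha+1/2})$. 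The correct computation is $\int_0^\delta u\delta^\alpha u^{-3/2}\,du+\int_\delta^s u^\alpha\delta\, u^{-3/2}\,du\le c\delta^{\alpha+1/2}+\tfrac{c}{1-2\alpha}\delta^{\alpha+1/2}$, which yields (i); for $\alpha=1/2$ the second piece is $c\delta\log^+(s/\delta)$, which is the logarithm that must be removed. Your integral $\int_0^s\min(u^\alpha,\delta^\alpha)u^{-3/2}\,du$ is the right principal term only one level up, for $\g^{(n+1)}$ in case (iii), where $\alpha>1/2$ makes it converge to order $\delta^{\alpha-1/2}/(2\alpha-1)$. Relatedly, in the Zygmund case the ``cancellation by symmetry'' is not automatic: the paper makes it quantitative via the second-difference bound $\bigl|[\partial_s^{n-1}f(u,s+\delta)-\partial_s^{n-1}f(u,s)]-[\partial_s^{n-1}f(u,s)-\partial_s^{n-1}f(u,s-\delta)]\bigr|\le c\delta^{3/2}$, obtained by integrating the oscillation of $\partial_s^{n}f$ in $s$; you need this (or an equivalent) on the middle range $[\delta,s-\delta]$ to turn $\delta\log(s/\delta)$ into $O(\delta)$, rather than an unverified cancellation of leading terms.
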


\begin{proof}
Assume that $\lbda\in C^{n,\alpha}([0,T];M)$ with $n \geq 2$ and $\alpha \in (0,1]$.
Fix $s_0\in (0,T)$ and let $D_0 = \{(u,s)\in D: 0<s_0\leq u\leq s \leq T\}$.  Recall from \cite{C} that
$$\g''(s)=\frac{2\g'(s)}{\g(s)^2} - 4\g'(s)\int^s_0 \frac{\partial_s [f(u,s)]}{f(u,s)^3}\,du.$$
We need to show
$$ F(s) :=  \int^s_0 \frac{\partial_s f(u,s)}{f(u,s)^3}\, du \mbox{ is } \left\{
 \begin{array}{rcl} 
\mbox{ in } C^{n-2} & \mbox{ and } & F^{(n-2)} \in C^{\alpha + 1/2} \mbox{ when } \alpha \in (0, 1/2) \\
\mbox{ in } C^{n - 2} & \mbox{ and } & F^{(n - 2)} \in \Lambda_*  \mbox{ when } \alpha = 1/2\\
\mbox{ in } C^{n - 1} & \mbox{ and } & F^{(n - 1)} \in C^{\alpha - 1/2} \mbox{ when } \alpha \in (1/2,1]
\end{array}\right. .$$
Let $F_1(u,s)= \dfrac{\partial_s f(u,s)}{f(u,s)^3}$ and $\hat{R}_1 (u,s)  = 0$. We define $F_k$ and $\hat{R}_k$ recursively as follows:
\begin{eqnarray*}
\hat{R}_k &=& \partial_s \hat{R}_{k - 1} - \frac{3 (\partial_s f) (\partial^{k - 1} _s f)}{f^4},\\ 
F_k  & = &\partial_s F_{k - 1} = \frac{\partial^k_s f }{ f^3} + \hat{R}_k.
\end{eqnarray*}
Let  $\hat{F}_k(s)=F_k(s, s)$. Then formally
\begin{equation}\label{e: F^{(n-2)}}
 F^{(n-2)}(s) = \hat{F}_1^{(n-3)}(s)+ \hat{F}_2^{(n-4)}(s) + \cdots \hat{F}_{n-2}(s) + \int^s_0 \left[ \frac{\partial^{n-1}_s f(u, s)}{f^3(u, s)} + \hat{R}_{n - 1}(u, s)\right]\, du,
\end{equation}
and
\begin{equation}\label{e: F^{(n-1)}}
 F^{(n-1)} (s)= \hat{F}_1^{(n-2)}(s)+ \hat{F}_2^{(n-3)}(s) + \cdots \hat{F}_{n-1}(s) + \int^s_0 \left[ \frac{\partial^n_s f(u, s)}{f^3(u, s)} + \hat{R}_n(u, s)\right] \, du.
\end{equation}

We notice that 
\begin{equation} \label{Rkhat}
\hat{R}_k = \sum\frac{c}{f^m}\prod^{m-2}_{j=1} (\partial^{m_j}_s f),
\end{equation}
where there are at most $k-1$ terms for the sum, $4\leq m\leq k+2$, and $1\leq m_j\leq k-1$.  
Further, when $k \geq 3$  each product contains at most one $m_j=k-1$. 
Therefore, $\hat{R}_k \in C^{n-(k-1)}(D_0)$, $F_k \in C^{n-k}(D_0)$ and $\hat{F}_k\in C^{n-k} [s_0,T] $.  The representation of $\hat{R}_k$ in \eqref{Rkhat} also implies that
\begin{eqnarray}
\label{e: R^k} |\hat{R}_k(u, s)| &\leq & c \mbox{ for } 1 \leq k \leq n,\\
\label{e: R^{n+1}} \mbox{ and }~~|\hat{R}_{n+1}(u, s)| &\leq & \frac{c}{u^{1/2}} \mbox{ if } \alpha \geq \frac{1}{2}.
\end{eqnarray}
Hence equation (\ref{e: F^{(n-2)}}) holds for all $\alpha \in (0,1]$ and equation (\ref{e: F^{(n-1)}}) holds when $\alpha \in (1/2,1]$. 

Let 
$$\displaystyle I_k(s):=\int^s_0 \dfrac{\partial^k_s f(u,s)}{f(u,s)^3}\, du \; \text{  and  } 
\; \displaystyle IR_k(s)=\int^s_0 \hat{R}_k (u, s)\, du.$$
Theorem \ref{t: quantitative} will be proven once we show that
\begin{itemize}
\item $I_{n - 1} + IR_{n - 1} \in C^{\alpha + 1/2} [s_0,T] \; \text{  for  } \alpha \in (0,1/2), $ \smallskip
\item $I_{n - 1} + IR_{n - 1} \in  \Lambda_* [s_0,T] \; \text{  for  } \alpha = 1/2, \; \text{  and  }$  \smallskip
\item $I_n + IR_n \in C^{\alpha - 1/2}  [s_0,T] \; \text{  for  } \alpha \in (1/2,1],$
\end{itemize}
along with the needed bounds on $| I_k(s+\delta) - I_k(s) |$ and $| IR_k(s + \delta) - IR_k(s)|$ (and the appropriate estimates for the $\alpha = 1/2$ case.)
This is the content of the next three lemmas.

\end{proof}

\begin{lemma}
Suppose $\lbda\in C^{n,\alpha}([0,T]; M)$, with $n\geq 2$ and $\alpha \in (0,1]$.
Then  there exists $c=c(M,n,T)$ such that for all $0<s_0\leq  s\leq s+ \delta \leq T$,
\begin{eqnarray*} 
|IR_k(s + \delta) - IR_k(s)|  & \leq & c \delta \mbox{ for all } 1\leq k \leq n-1 \text{  and  }\\
|IR_n(s + \delta) - IR_n(s) |  & \leq & c \delta \mbox{ if } \alpha \geq \frac{1}{2}.
\end{eqnarray*}
\end{lemma}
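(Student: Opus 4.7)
The plan is to split
$$IR_k(s+\delta) - IR_k(s) = \int_s^{s+\delta} \hat{R}_k(u, s+\delta)\, du + \int_0^s [\hat{R}_k(u, s+\delta) - \hat{R}_k(u, s)]\, du,$$
and bound each piece separately. The first integral is immediately controlled by the uniform bound (\ref{e: R^k}), which gives $|\hat{R}_k| \leq c$ for all $1 \leq k \leq n$, contributing at most $c\delta$.

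For the second integral, I would write $\hat{R}_k(u, s+\delta) - \hat{R}_k(u, s) = \int_s^{s+\delta} \partial_r \hat{R}_k(u, r)\, dr$ and invoke the recursion from the remark after Lemma~\ref{l: partial^k_s f}, namely
$$\partial_s \hat{R}_k = \hat{R}_{k+1} + \frac{3(\partial_s f)(\partial_s^k f)}{f^4}.$$
For $1 \leq k \leq n-1$, the first term on the right is bounded by (\ref{e: R^k}) (since $k+1 \leq n$), while Corollary~\ref{sumry-prop} gives $|\partial_s f|, |\partial_s^k f| \leq cu$ together with $|f|^{-4} \leq c u^{-2}$, so the second term is also $\leq c$. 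Hence $|\partial_r \hat{R}_k(u,r)| \leq c$, which yields $|\hat{R}_k(u, s+\delta) - \hat{R}_k(u, s)| \leq c\delta$, and the second integral is at most $cs\delta \leq c\delta$.

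For $k = n$ with $\alpha \geq 1/2$, the same recursion applies but with $\partial_s^k f = \partial_s^n f$. Now (\ref{e: R^{n+1}}) gives $|\hat{R}_{n+1}| \leq c/\sqrt{u}$, and Corollary~\ref{sumry-prop} yields $|(\partial_s f)(\partial_s^n f)/f^4| \leq c u \cdot u^\alpha / u^2 = c u^{\alpha - 1} \leq c u^{-1/2}$, using $\alpha \geq 1/2$ and $u \leq T$. Hence $|\partial_r \hat{R}_n(u, r)| \leq c/\sqrt{u}$, so $|\hat{R}_n(u, s+\delta) - \hat{R}_n(u, s)| \leq c\delta/\sqrt{u}$, and the second integral is bounded by $\int_0^s c\delta/\sqrt{u}\, du = 2c\delta \sqrt{s} \leq c\delta$.

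The main obstacle is the $k = n$ case, where $\hat{R}_{n+1}$ is singular at $u = 0$. The assumption $\alpha \geq 1/2$ is exactly what keeps the combined singularity at order $u^{-1/2}$, which remains integrable in $u$; for $\alpha < 1/2$ the second integral would acquire a worse singularity and this approach would fail, in line with the lemma only claiming the $k = n$ estimate when $\alpha \geq 1/2$.
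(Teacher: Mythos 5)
Your proof is correct and takes essentially the same approach as the paper's: differentiate the recursion $\partial_s \hat{R}_k = \hat{R}_{k+1} + 3(\partial_s f)(\partial_s^k f)/f^4$, bound the two terms via \eqref{e: R^k}, \eqref{e: R^{n+1}} and the estimates of Corollary \ref{sumry-prop} (getting $c$ for $k\leq n-1$ and $cu^{-1/2}$ for $k=n$, $\alpha\geq 1/2$), and integrate. You are in fact slightly more explicit than the paper in separating out the boundary contribution $\int_s^{s+\delta}\hat{R}_k(u,s+\delta)\,du$, which the paper absorbs into the phrase ``integrating completes the lemma.''
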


\begin{proof}
It follows from the definition of $\hat{R}_k$ and formula (\ref{e: R^k}) that 
for $1\leq k\leq  n - 1$,
$$
|\hat{R}_k(u, s + \delta) - \hat{R}_k(u, s)|   \leq \int_s^{s + \delta} | \partial_v \hat{R}_k(u,v) |\, dv
\leq   c\delta.$$
Similarly if $\alpha \geq \frac{1}{2}$ equation \eqref{e: R^{n+1}} implies
$$|\hat{R}_n(u, s + \delta) - \hat{R}_n(u, s)|   \leq  \frac{c\delta}{u^{1/2}}.$$
Integrating completes the lemma.
\end{proof}

\begin{lemma}Suppose $\lbda\in C^{n,\alpha}([0,T]; M)$, with $n\geq 2$ and $\alpha \in (0,\frac{1}{2}]$.
Then $I_{n-1} \in C^{\alpha + 1/2} [s_0, T] $ when $ \alpha \in (0,1/2)$ 
 and $I_{n-1} \in \Lambda_*[s_0,T] $ when $ \alpha = 1/2.$
In particular,
 there exists $c=c(M,n,T)$ such that for all $0<s_0\leq  s\leq s+ \delta \leq T$,
$$|I_{n - 1}(s + \delta) - I_{n - 1} (s) | \leq \left\{\begin{array}{rcl} c(\frac{1}{1 - 2 \alpha} +1) \delta^{\alpha + 1/2} + c(1+\frac{1}{\sqrt{s_0}}) \delta & \mbox{ when} & 0< \alpha < \frac{1}{2}\\
c (1+\log^+ \frac{s}{\delta} + \frac{1}{\sqrt{s_0}})\delta& \mbox{ when } & \alpha = \frac{1}{2} \end{array}\right. $$
and when $\alpha = 1/2$, 
\begin{equation}\label{LambdaStarEstimate}
|I_{n - 1}(s + \delta) + I_{n - 1}(s - \delta)- 2 I_{n - 1} (s) | \leq c\left(1+\frac{1}{\sqrt{s_0}}\right) \delta
\end{equation}
for all $0<s_0\leq  s-\delta \leq s+ \delta \leq T$.
\end{lemma}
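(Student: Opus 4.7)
My plan is to set $g(u,s) := \partial_s^{n-1} f(u,s)/f(u,s)^3$, so that $I_{n-1}(s) = \int_0^s g(u,s)\,du$, and decompose
\[
I_{n-1}(s+\delta) - I_{n-1}(s) = \int_0^s[g(u,s+\delta) - g(u,s)]\,du + \int_s^{s+\delta} g(u,s+\delta)\,du.
\]
The boundary integral is handled by the crude estimate $|g(u,s)| \leq cu/u^{3/2} = c/\sqrt{u} \leq c/\sqrt{s_0}$, contributing $c\delta/\sqrt{s_0}$. For the bulk integrand I would use the algebraic splitting
\[
g(u,s+\delta) - g(u,s) = \frac{\partial_s^{n-1} f(u,s+\delta) - \partial_s^{n-1} f(u,s)}{f(u,s+\delta)^3} + \partial_s^{n-1} f(u,s)\!\left[\frac{1}{f(u,s+\delta)^3} - \frac{1}{f(u,s)^3}\right],
\]
then plug in the bullets of Corollary \ref{sumry-prop}: the first fraction is $\leq c\min(\delta^\alpha/\sqrt{u},\,\delta u^{\alpha - 3/2})$ by the $(n-1)$st oscillation bound, and the mean value theorem on $1/z^3$ together with $|\partial_s f|\leq cu$ and $|f|\geq c\sqrt{u}$ makes the second piece $\leq c\delta$.

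To obtain the first-difference bound I would integrate these pointwise estimates in $u$, splitting the $\min$ at $u = \delta$: the part on $[0,\delta]$ contributes $2\delta^{\alpha+1/2}$, and the part on $[\delta, s]$ yields $\frac{2\delta^{\alpha+1/2}}{1-2\alpha}$ when $\alpha \in (0, 1/2)$ and $\delta\log^+(s/\delta)$ when $\alpha = 1/2$; the second-piece contribution $c\delta s$ is absorbed into $c\delta$. This matches the stated bound in both cases.

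For the Zygmund estimate \eqref{LambdaStarEstimate} I would apply the analogous decomposition
\[
I_{n-1}(s+\delta) + I_{n-1}(s-\delta) - 2I_{n-1}(s) = \int_0^{s-\delta}\bigl[g(u,s+\delta) + g(u,s-\delta) - 2g(u,s)\bigr]du + B,
\]
where $B$ collects the boundary integrals over $[s-\delta, s+\delta]$ and $[s-\delta, s]$ and contributes $c\delta/\sqrt{s_0}$ by the same crude bound. The main integral I would split at $u = \delta$: on $[0,\delta]$ the triangle inequality with the first-difference bound $|g(u,s\pm\delta) - g(u,s)|\leq c\delta^{1/2}/\sqrt{u}+c\delta$ produces an $O(\delta)$ contribution; on $[\delta, s-\delta]$, using that $\partial_s g = \partial_s^n f/f^3 - 3\partial_s^{n-1}f\,\partial_s f/f^4$ exists (available since $n\geq 2$), I would rewrite the second difference as $\int_0^\delta[\partial_s g(u,s+\tau) - \partial_s g(u,s-\delta+\tau)]\,d\tau$ and bound the oscillation of $\partial_s g$ at scale $\delta$ by $c\delta^{1/2}/u^{3/2}+c\delta/\sqrt{u}$, using the $C^{1/2}$-in-$s$ estimate on $\partial_s^n f$ from Corollary \ref{sumry-prop}. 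Then $\int_\delta^{s-\delta}[c\delta^{3/2}/u^{3/2}+c\delta^2/\sqrt{u}]\,du = O(\delta)$, as required.

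The main obstacle is this last step: I must verify that the $s$-oscillation of $\partial_s g$ is of the exact form $c\delta^{1/2}/u^{3/2} + c\delta/\sqrt{u}$, since any stray factor of $1/u$ would cost a logarithm or produce a divergent $u$-integral. Achieving the sharp form requires carefully separating the $\partial_s^n f/f^3$ contribution (where the $C^{1/2}$-in-$s$ bound gives the crucial factor $\min(u^{1/2},\delta^{1/2}) = \delta^{1/2}$ in the regime $u\geq \delta$) from the remainder $-3\partial_s^{n-1}f\,\partial_s f/f^4$, whose $s$-oscillation I would compute term-by-term using the Lipschitz-in-$s$ estimates on each factor together with the standard identity for differences of $1/f^4$.
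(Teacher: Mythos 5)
Your proposal is correct and follows essentially the same route as the paper: the first-difference bound uses the identical decomposition into numerator-difference, denominator-difference, and boundary pieces, with the same $\min(u\delta^\alpha, u^\alpha\delta)$ oscillation estimate split at $u=\delta$. For the Zygmund estimate the paper telescopes the second difference of the quotient into three terms and bounds the second difference of $\partial_s^{n-1}f$ directly by $\int_s^{s+\delta}|\partial_s^n f(u,r)-\partial_s^n f(u,r-\delta)|\,dr \leq c\delta^{3/2}$, whereas you reach the same $O(\delta^{3/2}u^{-3/2}+\delta^2 u^{-1/2})$ integrand by writing the second difference as an integral of the oscillation of $\partial_s g$ — a cosmetic reorganization resting on the same key input, the $c\min(u^{1/2},\delta^{1/2})$ oscillation bound on $\partial_s^n f$.
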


\begin{proof}
We decompose $I_{n-1}(s+\delta) - I_{n-1}(s)$ into the sum of four integrals and bound each integral.
\begin{eqnarray*}
 I_{n-1}(s + \delta) - I_{n-1} ( s) &= &\int^{\delta \wedge s}_0 \frac{ \partial^{n-1}_s f(u, s + \delta) - \partial^{n-1}_s f(u, s)}{f(u, s + \delta)^3}\,du\\
& +           &     \int^s_{\delta \wedge s} \frac{ \partial^{n-1}_s f(u, s + \delta) - \partial^{n-1}_s f(u, s)}{f(u, s + \delta)^3}\,du\\
&      +     &  \int^s_0 \frac{\partial^{n-1}_s f(u,s) (f(u,s)^3 - f(u, s + \delta)^3)}{f(u, s)^3 f(u, s + \delta)^3}\, du\\
&     +      & \int^{s + \delta}_ s \frac{\partial^{n-1}_s f(u, s + \delta)}{f(u, s + \delta)^3}\,du.
\end{eqnarray*}
The first integral:
\begin{eqnarray*}
\left |\int^{\delta \wedge s}_0 \frac{ \partial^{n-1}_s f(u, s + \delta) - \partial^{n-1}_s f(u, s)}{f(u, s + \delta)^3}\,du\right| &\leq & \int^{\delta \wedge s}_0 \frac{c u \delta^\alpha}{u^{3/2}}\, du  \\
&=& c \delta^\alpha \sqrt{\delta \wedge s} \leq c \delta^{\alpha+1/2}.
\end{eqnarray*}
The second integral, when  $0<\alpha< 1/2$: 
\begin{eqnarray*}
\left|\int^s_{\delta \wedge s} \frac{ \partial^{n-1}_s f(u, s + \delta) - \partial^{n-1}_s f(u, s)}{f(u, s + \delta)^3}\, du \right| &\leq & \int^s_{ \delta\wedge s} \frac{c u^\alpha \delta} {u^{3/2}}\, du\\
&\leq  & \frac{c\delta}{1 -2 \alpha} (\delta^{\alpha - 1/2} - s^{\alpha - 1/2}) \\
& \leq & \frac{c}{1 - 2\alpha} \delta^{\alpha + 1/2}.
\end{eqnarray*}
In the case $\alpha = 1/2$, the second integral is bounded by
$$
\int^s_{\delta \wedge s} c\delta u^{-1}\, du = c\delta \log \frac{s}{s \wedge \delta} = c\delta \log^+ \frac{s}{\delta}.$$
The third integral: 
\begin{eqnarray*}
\left| \int^s_0 \frac{\partial^{n-1}_s f(u,s) (f(u,s)^3 - f(u, s + \delta)^3)}{f(u, s)^3 f(u, s + \delta)^3)}\, du\right| & \leq & \int^s_0  \frac{c u(u\delta u)}{u^3}\,du \\
& = & c\delta s\leq c \delta.
\end{eqnarray*}
The last integral:
\begin{eqnarray*}
\left|\int^{s + \delta}_ s \frac{\partial^{n-1}_s f(u, s + \delta)}{f(u, s + \delta)^3}\,du \right| \leq \int^{s + \delta}_s \frac{c u }{u^{3/2}} \, du &= &c (\sqrt{s+\delta} - \sqrt{s})\\
& = &\frac{c\delta}{\sqrt{s + \delta}+ \sqrt{s}}  \leq \frac{c}{\sqrt{s_0}} \delta.
\end{eqnarray*}

To finish the proof, it remains to show \eqref{LambdaStarEstimate}.  Set $\alpha = 1/2$ and 
 write
$$ I_{n - 1}(s + \delta) + I_{n - 1}(s - \delta)- 2 I_{n - 1} (s) = 
 \left[ I_{n - 1}(s + \delta) - I_{n - 1} (s) \right] - \left[ I_{n - 1} (s) - I_{n - 1}(s - \delta) \right].$$
As  with
$ I_{n - 1}(s + \delta) -I_{n-1}(s)$ above,  we can decompose  $I_{n-1}(s) - I_{n - 1}(s - \delta)$ 
 into the sum of four integrals.  In both cases, the first, third and fourth integrals yield adequate bounds.   When $\delta \geq s-\delta$, the second integral is also adequately controlled.  Thus, we assume $\delta < s-\delta$ and we only need to control the difference of the second integrals:
 \begin{equation*}
  \int^s_{\delta} \frac{ \partial^{n-1}_s f(u, s + \delta) - \partial^{n-1}_s f(u, s)}{f(u, s + \delta)^3}\, du  
    - \int^{s-\delta}_{\delta } \frac{ \partial^{n-1}_s f(u, s ) - \partial^{n-1}_s f(u, s-\delta)}{f(u, s )^3}\, du.
\end{equation*}
We can decompose this into the sum $J_1 + J_2 + J_3$ where
\begin{align*}    
    J_1 &= \int^{s-\delta}_{\delta } \frac{\left( f(u, s)^3 - f(u, s+\delta)^3 \right) \left(  \partial^{n-1}_s f(u, s+\delta) -  \partial^{n-1}_s f(u, s) \right)}{f(u, s + \delta)^3 f(u, s)^3} \, du \\
    J_2 &= \int^{s-\delta}_{\delta } \frac{  \partial^{n-1}_s f(u, s+ \delta ) + \partial^{n-1}_s f(u, s - \delta ) - 2  \partial^{n-1}_s f(u, s ) }{ f(u, s)^3} \, du\\
     J_3 &= \int^{s}_{s-\delta} \frac{ \partial^{n-1}_s f(u, s + \delta) - \partial^{n-1}_s f(u, s)}{f(u, s + \delta)^3}\, du.  
\end{align*}  
Then
$$ |J_1| \leq \int^{s-\delta}_{\delta } \frac{c(u\delta u )( u \sqrt{\delta}) }{u^3}\, du \leq c \delta^{3/2},$$
and
$$ |J_3| \leq \int_{s-\delta}^{s } c \delta u^{-1} \, du = c \delta \log \frac{s}{s-\delta} \leq c \delta \log \frac{T}{s_0}.$$ 
Since
\begin{align*}
 &\Big| \left[ \partial^{n-1}_s f(u, s+ \delta ) - \partial^{n-1}_s f(u, s) \right]
        - \left[ \partial^{n-1}_s f(u, s) - \partial^{n-1}_s f(u, s- \delta ) \right] \Big| \\
        &\;\;\;\;= \left| \int_s^{s+\delta} \partial^{n}_s f(u, r ) - \partial^{n}_s f(u, r-\delta ) \, dr \right| \\
        &\;\;\;\;\leq \int_s^{s+\delta} c \sqrt{\delta} \, dr \leq c \delta^{3/2}, 
\end{align*}
then 
$$|J_2| \leq \int^{s-\delta}_{\delta } \frac{ c\delta^{3/2}}{u^{3/2}} \, du \leq c \delta.$$
This establishes \eqref{LambdaStarEstimate} and completes the lemma.
\end{proof}

\begin{lemma} Suppose $\lbda\in C^{n,\alpha}([0,T];M)$ with $n\geq 2$ and $\alpha\in (\frac{1}{2},1]$. 
Then $I_{n} \in C^{\alpha - 1/2} [ s_0, T] $, and
there exists $c=c(M,T,n)$ such that for all $0\leq s \leq s+ \delta\leq T$
$$|I_n(s + \delta) - I_n(s) | \leq \frac{c}{2 \alpha - 1} \delta^{\alpha - 1/2}.$$
\end{lemma}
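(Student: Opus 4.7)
The plan is to mirror the four-piece decomposition used in the previous lemma, writing
\begin{align*}
I_n(s+\delta) - I_n(s)
&= \int_0^{\delta \wedge s} \frac{\partial_s^n f(u, s+\delta) - \partial_s^n f(u, s)}{f(u, s+\delta)^3}\, du \\
&\quad + \int_{\delta \wedge s}^s \frac{\partial_s^n f(u, s+\delta) - \partial_s^n f(u, s)}{f(u, s+\delta)^3}\, du \\
&\quad + \int_0^s \partial_s^n f(u, s)\,\frac{f(u, s)^3 - f(u, s+\delta)^3}{f(u, s)^3 f(u, s+\delta)^3}\, du \\
&\quad + \int_s^{s+\delta} \frac{\partial_s^n f(u, s+\delta)}{f(u, s+\delta)^3}\, du,
\end{align*}
and then bounding each piece using the estimates collected in Corollary~\ref{sumry-prop}. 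The key inputs are $|f(u,s)| \asymp \sqrt{u}$, $|\partial_s^n f(u,s)| \leq c u^\alpha$, and the oscillation bound $|\partial_s^n f(u, s+\delta) - \partial_s^n f(u, s)| \leq c\min(u^\alpha, \delta^\alpha)$.

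For the first and second pieces I would apply the oscillation bound with $u^\alpha$ on $[0,\delta \wedge s]$ and with $\delta^\alpha$ on $[\delta \wedge s, s]$: the first produces $\int_0^{\delta \wedge s} c u^{\alpha - 3/2}\, du \leq \frac{c}{\alpha - 1/2}\,\delta^{\alpha - 1/2}$, while the second gives $\int_{\delta \wedge s}^s c\delta^\alpha u^{-3/2}\, du \leq c\,\delta^{\alpha - 1/2}$. The fourth piece is essentially identical to the first and yields $\frac{c}{\alpha - 1/2}\,\delta^{\alpha - 1/2}$, using concavity of $x \mapsto x^{\alpha - 1/2}$ to convert $(s+\delta)^{\alpha - 1/2} - s^{\alpha - 1/2}$ into $\delta^{\alpha - 1/2}$.

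For the third piece I would invoke Lemma~\ref{l: lambda in C^1} with exponent one, which applies because $\lambda'$ is Lipschitz whenever $n \geq 2$, to get $|f(u,s+\delta) - f(u,s)| \leq c u\delta$ and hence $|f(u,s)^3 - f(u,s+\delta)^3| \leq c u^2 \delta$. That turns the integrand into $c \delta u^{\alpha - 1}$, which is integrable since $\alpha > 0$, and the resulting $c\delta$ is absorbed into $c\,T^{3/2 - \alpha}\,\delta^{\alpha - 1/2}$. Summing the four bounds will give the claimed estimate with constant of order $c/(2\alpha - 1)$.

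The delicate point---and indeed the reason the estimate degenerates at $\alpha = 1/2$---is that the uniform bound $c\delta^\alpha$ alone would make the first integral non-integrable near $u = 0$. The split at $u = \delta$ together with the alternative small-$u$ bound $c u^\alpha$ is what keeps the integral finite, and it is precisely this trade-off that produces the factor $1/(2\alpha - 1)$.
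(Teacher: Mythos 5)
Your proposal is correct and follows essentially the same route as the paper: the identical four-piece decomposition, with the first and fourth integrals bounded via $\min(u^\alpha,\delta^\alpha)/u^{3/2}$ giving the $1/(2\alpha-1)$ factor, the second via $\delta^\alpha u^{-3/2}$, and the third via $|f(u,s+\delta)-f(u,s)|\leq cu\delta$ yielding an integrand $c\delta u^{\alpha-1}$. The only cosmetic difference is your explicit appeal to concavity for the fourth piece and the $T^{3/2-\alpha}$ absorption for the third, both of which are fine.
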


\begin{proof}
We proceed in a manner similar to the previous proof.
\begin{eqnarray*}
I_n(s + \delta) - I_n(s) &= & \int^{\delta \wedge s}_0 \frac{ \partial^n_s f(u, s + \delta) - \partial^n_s f(u, s)}{f(u, s + \delta)^3}\,du\\
&+   &\int^s_{\delta \wedge s} \frac{ \partial^n_s f(u, s + \delta) - \partial^n_s f(u, s)}{f(u, s + \delta)^3}\,du\\
&   + &    \int^s_0 \frac{\partial^n_s f(u,s) (f(u,s)^3 - f(u, s + \delta)^3)}{f(u, s)^3 f(u, s + \delta)^3}\, du\\
&  + &\int^{s + \delta}_ s \frac{\partial^n_s f(u, s + \delta)}{f(u, s + \delta)^3}\, du.
\end{eqnarray*}
The first integral:
\begin{eqnarray*}
\left|\int^{\delta \wedge s}_0 \frac{ \partial^n_s f(u, s + \delta) - \partial^n_s f(u, s)}{f(u, s + \delta)^3}\,du\right|&  \leq & \int^{\delta \wedge s}_0 \frac{c\min(u^\alpha, \delta^\alpha)}{u^{3/2}}\, du\\
& \leq &c\int^{\delta \wedge s}_0 u^{\alpha - 3/2} \, du \leq \frac{c}{2\alpha - 1} \delta^{\alpha - 1/2}.
\end{eqnarray*}
The second integral:
\begin{eqnarray*}
\left|    \int^s_{\delta \wedge s} \frac{ \partial^n_s f(u, s + \delta) - \partial^n_s f(u, s) }{ f(u, s + \delta)^3}\,du \right| &  \leq   &  \int^s_{s \wedge \delta} \frac{c\min (u^\alpha, \delta^\alpha)}{u^{3/2}}\, du\\
&\leq & \int^s_{s \wedge \delta} \frac{c\delta^\alpha}{u^{3/2}}\, du \leq c\delta^{\alpha}(\delta^{-1/2}-s^{-1/2})\leq c \delta^{\alpha - 1/2}.
\end{eqnarray*}
The third integral:
\begin{eqnarray*}
\left|\int^s_0 \frac{\partial^n_s f(u,s) (f(u,s)^3 - f(u, s + \delta)^3)}{f(u, s)^3 f(u, s + \delta)^3}\, du \right| & \leq & \int^s_0 cu^\alpha \frac{u^2\delta}{u^3}\, du \\
&= &\int^s_0 c\delta u^{\alpha - 1}\,  du = \frac{c\delta}{\alpha}s^\alpha\leq c\delta^{\alpha - 1/2}.
\end{eqnarray*}
The last integral:
\begin{eqnarray*}
\left| \int^{s + \delta}_ s \frac{\partial^n_s f(u, s + \delta)}{f(u, s + \delta)^3} \, du \right| & \leq & \int^{s + \delta}_s \frac{cu^\alpha}{u^{3/2}}\, du =  \frac{c}{2\alpha - 1} ((s + \delta)^{\alpha - 1/2} - s^{\alpha - 1/2})\\
&\leq & \frac{c}{2\alpha - 1} \delta^{\alpha - 1/2}.
\end{eqnarray*}
\end{proof}



\section{Real analyticity of $\g$}
\label{sec: analytic}
In this section we prove Theorem \ref{t:analytic}. There exists $\delta>0$ such that $\lbda$ can be extended (complex) analytically to $E=\{z\in\mb{C}: d(z,[0,T])\leq \delta\}$. 
Notice that $f(s,s) = \g(0,s)= \g(s)-\lbda(0)$ and $f(u,s,\e)$ converges uniformly to $f(u,s)$ on $D=\{(u,s):0<u\leq s, 0<s \leq T\}$. 
So it suffices to show that $f(u, s, \e)$ can be extended analytically in the same neighborhood of $D$ (in $\mb{C}\times \mb{C}$) for all $\e$.
Recall that $G(z,u,s) = \frac{-2}{z} + \lbda'(s - u)$ is analytic in $(z,u,s)$, hence by the dependence of solutions of ODE on parameters (see \cite[Theorem 8.1]{CL}) the function $f( \cdot,s,\e)$ in (\ref{e: ODE}) exists and is analytic in a neighborhood of $u=0$ for each $\e\in (0,1]$ and $s\in E$. 
The main difficulty is to show this neighborhood is the same for all $\e$ and $s$. 

The outline of this section is as follows:
First we show in Lemma \ref{l: complex} that the equation (\ref{e: ODE}) still has solution when $s$ is in the domain $$E_1 = \{t: 0< \R t<T + \delta_1, |\I t|<\delta_1 \}$$ with $\delta_1$ small enough and not depending on $\e$.
Then in Lemma \ref{l: extension} we  show that one can take complex u-derivatives in (\ref{e: ODE}), which means the solutions are extended analytically. 
Finally by \cite[Theorem 8.3]{CL} the solutions are analytic in $(u,s)$ on the same domain for all $\e$. 

Let $M$ be an upper bound for the sup-norms of $\lbda'$ and $\lbda''$ on $E$. As a first step, we will show the following:

\begin{lemma} \label{l: complex}
 There exists $\delta_1 \in (0, \delta) $ depending on $\delta, M$ and $T$ such that for every $s\in E_1$ and $\e\in (0,1]$, the solution to the equation 
\begin{eqnarray*}
\partial_u f(u, s, \e) &= & \frac{-2}{f(u, s, \e)} + \lbda'(s - u),~~~~ u\geq 0,\\
f(0, s, \e) &= & i\e,
\end{eqnarray*}
exists uniquely for $u \in [0, \R s + \delta_1]$. Moreover,
$$
 \max(\sqrt{2u},\frac{\e}{2})  \leq  \I f(u, s, \e) \mbox{ for } 0 \leq u \leq \R s + \delta_1. 
$$
\end{lemma}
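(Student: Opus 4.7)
The strategy is a bootstrap argument patterned on Lemma \ref{l: upward ODE}(ii), adapted to handle the complex parameter $s$. Since $G(z, u, s) = -2/z + \lambda'(s - u)$ is jointly continuous in $(u, s)$ and holomorphic in $z \in \H$, classical ODE theory yields a unique local solution $f(\cdot, s, \e)$ with $f(0) = i\e$, extendable for as long as $\I f$ stays positive. The entire task thus reduces to proving the stated lower bound on $\I f$. Pick $M_1 = M_1(\delta, M, T)$ bounding $|\lambda'|$ and $|\lambda''|$ on $E$. Because $\lambda$ is real on the real line, integrating $\lambda'$ along a vertical segment gives $|\I \lambda'(s - u)| \leq M_1 |\I s| \leq M_1 \delta_1$ for $s \in E_1$, so the imaginary perturbation caused by complex $s$ is uniformly controlled by $\delta_1$.

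Writing $f = x + iy$, let $u^* \in [0, \R s + \delta_1]$ be the largest value up to which $y(v) \geq \max(\sqrt{2v}, \e/2)$ holds for all $v \in [0, u^*]$. By continuity at $u=0$ we have $u^* > 0$; assume for contradiction $u^* < \R s + \delta_1$. Two auxiliary estimates on $[0, u^*]$ are needed: (a) the sign argument of Lemma \ref{l: upward ODE}(ii) applied to $(x + \R \lambda(s - u))' = -2 x / |f|^2$, where one splits the increment $|\lambda(s - v) - \lambda(s - u)|$ through its projections onto $[0,T]$ and combines $\|\lambda\|_{C^{1/2}[0,T]} \leq 1$ with the Lipschitz bound from analyticity along horizontal and vertical segments in $E$, yielding $|x(u)| \leq \sqrt{u} + C_1 \delta_1$; and (b) an upper bound $|f(u)| \leq Y = Y(M_1, T)$ obtained from $(f^2)' = -4 + 2 f \lambda'(s - u)$ via Gronwall. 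Combined with the bootstrap hypothesis these yield $y^2/|f|^2 \geq c_0 = c_0(M_1, T, \delta_1)$ on $[0, u^*]$, whence
\begin{equation*}
(y^2)' \;=\; \frac{4 y^2}{|f|^2} + 2 y \, \I \lambda'(s - u) \;\geq\; 4 c_0 - 2 Y M_1 \delta_1.
\end{equation*}
Taking $\delta_1 = \delta_1(\delta, M, T)$ sufficiently small makes the right-hand side exceed $2$, so $y(u)^2 \geq 2u + \e^2$ on $[0, u^*]$, strictly beating $\max(2u, \e^2/4)$ and contradicting the choice of $u^*$.

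The main obstacle is ensuring that the lower bound $c_0$ on $y^2/|f|^2$, and hence the choice of $\delta_1$, is uniform in $\e \in (0, 1]$. The delicate regime is $u \leq c \e^2$, where $\sqrt{2 u}$ is dominated by $\e/2$; here one must use $y \geq \e/2$ rather than $\sqrt{2u}$, and observe that the ODE for $x$ gives $|x'(u)| \leq 1/y + M_1 \leq 2/\e + M_1$, so $|x(u)| \leq (2/\e + M_1) u \leq C' \e$ on this interval, keeping $x^2/y^2$ bounded independently of $\e$. Stitching this small-$u$ regime together with the large-$u$ regime $u \geq c \e^2$ (where $|x| \leq \sqrt{u} + C_1 \delta_1$ and $y \geq \sqrt{2u}$ directly control $x^2/y^2$) into a single bootstrap, with $\delta_1$ depending only on $\delta, M, T$, is the technical heart of the argument.
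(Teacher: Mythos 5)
Your overall strategy (local existence plus a bootstrap on $\I f$, with the complex perturbation of $s$ entering only through $O(\delta_1)$ errors in $\lambda$ and its derivatives) is reasonable, but the regime-splitting you propose does not close the bootstrap uniformly in $\e$. The problem is the intermediate range $c\e^2 \le u \le \delta_1^2$, which is nonempty whenever $\e \ll \delta_1$. There your only available bound is $|x(u)| \le \sqrt{u} + C_1\delta_1$, while the bootstrap hypothesis gives $y(u) \ge \sqrt{2u}$ (the alternative $y \ge \e/2$ is even smaller in this range). Since $\sqrt{u}$ can be far below $C_1\delta_1$ here, taking $u$ near $c\e^2$ with $\e$ small makes $x^2/y^2$ as large as $C_1^2\delta_1^2/(2c\e^2)$, so the constant $c_0$ degenerates and $(y^2)' \ge 4c_0 - 2YM_1\delta_1$ no longer exceeds $2$. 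Your small-$u$ estimate $|x'| \le 2/\e + M_1$ does not rescue this range either: integrating it up to $u \gg \e^2$ gives $|x| \lesssim u/\e \gg \sqrt{u}$. The gap is fixable within your framework: because $\lambda'$ is bounded by $M_1$ on all of $E$, the sign argument applied over the single horizontal segment from $s-u$ to $s-u_0$ gives the cleaner bound $|x(u)| \le M_1 u$ with no $\delta_1$ error, and $M_1 u/\sqrt{2u} = M_1\sqrt{u/2}$ is small for $u$ below a threshold $u_1(M_1)$; above $u_1$ your bound $\sqrt{u} + C_1\delta_1 \le \sqrt{u}\,(1 + C_1\delta_1/\sqrt{u_1})$ suffices once $\delta_1$ is small. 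As written, though, the claim that the regime $u \ge c\e^2$ is ``directly'' controlled is false, and this is precisely the step you flag as the technical heart.

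For comparison, the paper sidesteps the bootstrap entirely: it compares $f(u,s,\e)$ with the real-parameter solution $f(u,\R s,\e)$, whose estimates $\I f \ge \sqrt{3u+\e^2}$ and $|\R f| \le \sqrt{u}$ are already known from Lemma~\ref{l: upward ODE}, and shows via the ODE for the difference (as in Lemma~\ref{l: lambda in C^1}, with forcing $\lambda'(s-u)-\lambda'(\R s-u)$ bounded by $M\delta_1$) that $|f(u,s,\e)-f(u,\R s,\e)| \le CMu\delta_1$ up to a stopping time where the sector condition would fail. An $O(u\delta_1)$ perturbation of $\sqrt{3u+\e^2}$ stays above $\max(\sqrt{2u},\e/2)$ for all $u$ and all $\e \in (0,1]$ simultaneously once $\delta_1$ is small, so no case analysis in $\e$ is needed.
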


\begin{proof}
The solution $f(u,s, \e)$ exists on a neighborhood of $u=0$, and it continues to exists as long as it stays above the real line. 
The uniqueness of this solution comes from standard ODE techniques. 
To establish the results of the lemma, we will compare $f(u, s, \e)$ to $f(u,s_0,\e)$ where $s_0 = \R s$ and
 \begin{eqnarray*}
\partial_u f(u, s_0, \e) &= & \frac{-2}{f(u, s_0, \e)} + \lbda'(s_0 - u),~~~~ u\geq 0,\\
f(0, s_0, \e) &= & i\e.
\end{eqnarray*}
It follows from Lemma \ref{l: upward ODE} $(i,ii)$ that
\begin{eqnarray*}
\sqrt{3u + \e^2} & \leq & \I f(u, s_0, \e) \\
\mbox{and }\,\,\,\,\,\,|\R f(u, s_0, \e)| &\leq & \sqrt{u}~~~~~~~~~  \mbox{ for } 0\leq u \leq s_0 + \delta_1,
\end{eqnarray*}
where $\delta_1 < \delta$ will be specified momentarily.
By following the same argument in Lemma \ref{l: lambda in C^1},
we get a bound for the difference of $f(u,s,\e)$ and $f(u, s_0,\e)$:
$$|f(u, s, \e) - f(u, s_0, \e)| \leq C M u |s - s_0| \leq C M u \delta_1$$
whenever $0\leq u \leq S$ with 
$$S = \inf\{ 0\leq v \leq u_0+\delta_1: \I f(v, s, \e) < \frac{\e}{3} \mbox { or } \frac{|\R f(v, s, \e)|}{\I f(v, s, \e)} > C_1\}, $$
where $C_1$ is a constant in $(0,1)$ and close to $1$. 
It follows that 
$$\I f(u, s, \e) \geq \I f(u, s_0, \e) - C M u \delta_1 \geq \sqrt{3u+\e^2} - CMu\delta_1,$$
and 
$$ |\R f(u, s, \e)| \leq |\R f(u, s_0, \e)| + C M u \delta_1 \leq \sqrt{u} + C M u \delta_1.$$
By choosing $\delta_1$ small enough, $\I f(u, s, \e) \geq \max(\sqrt{2u},\e/2)$ and 
$$\frac{|\R f(u, s, \e)|}{\I f(u, s, \e)}  < C_1$$
for all $0\leq u \leq S$. It follows that $S=u_0 + \delta_1$ and the lemma follows.
\end{proof}

Now we will show that

\begin{lemma} \label{l: extension} For every $\e\in (0,1]$, $s\in E_1$ and $0< \tilde{u}<\R s + \delta_1$, there exist $r=r(\tilde{u}, M, \delta, T) \in (0, \delta - \delta_1)$ and an analytic extension of $f(\cdot, s, \e)$ on  $B_{\tilde{u}} =\{ z\in \mb{C}: |z - \tilde{u}| < r\}$ such that
$$\partial_u f(u, s, \e) = \frac{-2}{ f(u, s, \e)} + \lbda'(s - u).$$
\end{lemma}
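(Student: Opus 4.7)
The strategy is to apply the complex-analytic Picard--Lindel\"of theorem on a disc centered at $u=\tilde u$, using the value $z_0 := f(\tilde u, s, \e)$ from Lemma~\ref{l: complex} as initial data, and to verify that the radius of analyticity can be chosen independently of $\e$ and $s$. The right-hand side $G(z,u,s) = -2/z + \lbda'(s-u)$ is jointly holomorphic in $(z,u)$ away from $\{z=0\}$, as long as $s-u$ stays in the analyticity domain $E$ of $\lbda$.

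\textbf{Setting up a uniform polydisc.} By Lemma~\ref{l: complex}, $\I z_0 \geq \sqrt{2\tilde u}$, so $z_0$ is bounded away from the pole of $-2/z$ by a quantity independent of $\e$. Set $\rho := \tfrac12\sqrt{2\tilde u}$; on the disc $\{|z-z_0|\leq \rho\}$ one has $|z| \geq \I z_0 - \rho \geq \tfrac12\sqrt{2\tilde u}$, hence $|{-2/z}| \leq 4/\sqrt{2\tilde u}$. Provided $r < \delta - \delta_1$, we have $|\I(s-u)| \leq |\I s|+|\I u| < \delta$ for $(s,u)\in E_1\times B_{\tilde u}(r)$, so that $s-u\in E$ and $|\lbda'(s-u)|\leq M$. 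Consequently $G$ is holomorphic and bounded by $K:= 4/\sqrt{2\tilde u} + M$ on the polydisc $\{|z-z_0|\leq \rho,\ |u-\tilde u|\leq r\}$, and its $z$-Lipschitz constant there is bounded by $8/(2\tilde u) = 4/\tilde u$.

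\textbf{Constructing the holomorphic solution.} Choose $r := \min\{\rho/K,\ \tilde u/(8),\ \delta-\delta_1\}$, a quantity depending only on $\tilde u$, $M$, $\delta$ and $T$. Then the Picard operator
\[
(\mathcal T \tilde f)(u) = z_0 + \int_{\tilde u}^{u} G(\tilde f(v), v, s)\,dv,
\]
with integration along the segment from $\tilde u$ to $u$, is a contraction on the closed subspace of holomorphic functions $\tilde f: B_{\tilde u}(r)\to\mb{C}$ satisfying $\tilde f(\tilde u)=z_0$ and $\|\tilde f - z_0\|_\infty\leq \rho$, equipped with the sup norm. The bound $|G|\leq K$ combined with $r\leq \rho/K$ keeps the image in the $\rho$-ball, while $r\leq \tilde u/8$ gives contraction via the Lipschitz estimate. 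The unique fixed point $\tilde f$ is holomorphic on $B_{\tilde u}(r)$ and solves $\partial_u \tilde f = G(\tilde f,u,s)$.

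\textbf{Agreement and the main obstacle.} Restricting $\tilde f$ to $(\tilde u - r, \tilde u + r)\cap[0,\R s+\delta_1]$ yields a real solution of the same ODE with the same value at $u=\tilde u$; by uniqueness it coincides with $f(\cdot,s,\e)$, so $\tilde f$ is the desired analytic extension. The main obstacle is precisely ensuring that $r$ is independent of $\e$ and $s$: this is secured by the $\e$-free lower bound $\I f(\tilde u,s,\e)\geq\sqrt{2\tilde u}$ from Lemma~\ref{l: complex}, which controls $|G|$ and its Lipschitz constant uniformly; the $s$-independence comes from the uniform bound $|\lbda'|\leq M$ on $E$. Given this uniformity, Theorem 8.3 in \cite{CL} then allows one to read off joint analyticity in $(u,s)$ on the same polydisc, as invoked in the section's outline.
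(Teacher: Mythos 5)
Your proposal is correct and follows essentially the same route as the paper: both run a Picard iteration/contraction argument on a disc $B_{\tilde u}$ of radius $r$ depending only on $\tilde u, M, \delta, T$, using the $\e$- and $s$-independent lower bound $\I f(\tilde u,s,\e)\geq \sqrt{2\tilde u}$ from Lemma~\ref{l: complex} to keep the iterates away from the pole of $-2/z$ and to control the Lipschitz constant. The only cosmetic difference is that the paper confines the iterates to the half-plane $\{\I z\geq \sqrt{\tilde u}\}$ while you confine them to the ball $\{|z-z_0|\leq \tfrac12\sqrt{2\tilde u}\}$; both give the same uniform radius and the same identification with $f(\cdot,s,\e)$ on the real axis by ODE uniqueness.
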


\begin{proof}
We will use the Picard iteration to show that the equation
\begin{eqnarray}
 \label{e: g}  g'(u)   &    =          &        -\frac{2} {g(u)} + \lbda'( s - u),\\
\nonumber   g(\tilde{u})      & =   & f( \tilde{u}, s, \e) 
\end{eqnarray}
has a solution on $B_{\tilde{u}} =\{ z\in \mb{C}: |z - \tilde{u}| < r\}$, where  $r$ will be specified later. 
Indeed for $|u - \tilde{u}| < r$   define
$g_0(u) = f(\tilde{u}, s, \e)$ and
$$g_{n+1}(u) = f(\tilde{u}, s, \e) + \int^u_{  \tilde{u}} \frac{-2}{g_n(v)} + \lbda'(s-v) \, dv.$$
We will show by induction on $n$ that $g_n$ is well-defined and analytic in  $B_{\tilde{u}}$ and
$$\I g_n(u) \geq \sqrt{\tilde{u}}.$$
The base case $n=0$ is clear because of Lemma \ref{l: complex}.
Suppose the claim holds for $n$.  
The function $g_{n+1}$ is well-defined and analytic in $B_{\tilde{u}}$ since $ \frac{1}{g_n}$ is analytic in a simply connected domain. Now
\begin{eqnarray*}
\I g_{n+1}(u) & \geq &\I f(\tilde{u}, s, \e) - |u-\tilde u|\max_{ v\in B_{\tilde{u}}} \left( \frac{2}{|g_n(v)|} + |\lbda'(s-v)|\right) \\
 & \geq & \sqrt{2\tilde{u}} - r (\frac{2}{\sqrt{\tilde{u}}} + M).
 \end{eqnarray*}
 The claim holds for $n+1$ by choosing $r$ small enough depending on $\tilde{u}, M$ and $T$. We also require that $r$ is small enough so that $2r/\tilde{u}<1$.  Then the sequence $g_n$ converges uniformly in $B_{\tilde{u}}$ since
 \begin{eqnarray*}
 |g_{n+1}(u) - g_n(u)| & \leq & |u - \tilde{u}| \max_{ v\in B_{\tilde{u}}}  \frac{2 |g_n(v) - g_{n-1}(v)|}{|g_n(v) g_{n-1}(v)|}\\
 & \leq & \frac{2r} { \tilde{u}} ||g_n - g_{n-1}||_{B_{\tilde{u}},\infty}.
 \end{eqnarray*}

\noindent
Let $g$ be the limit. Then this function is analytic and satisfies the differential equation (\ref{e: g}). In particular $g(u)$ and $f(u, \tilde{u}, \e)$ solve same initial value problem. Hence they are equal when $u$ is real. In order words, $f(\cdot, s, \e)$ is extended analytically on $B_{\tilde{u}}$.
 \end{proof}
 
 \emph{Proof of Theorem \ref{t:analytic}.}
By \cite[Theorem 8.3]{CL}, for every $\e\in (0,1]$ the function $f(u, s, \e)$ is analytic in the domain $\{(u,s): s\in E_1, u\in B_{\tilde{u}} \mbox{ for some } \tilde{u}\in (0, \R s + \delta_1) \}$. It follows that $f(u,s)$ is also analytic in the same domain which contains $\{(s,s): 0<s \leq T\}$. Hence $f(s,s)$ and $\g(s)$ is real analytic on $(0,T]$. \qed

%
\section{Behavior of $\gamma$ at $s=0$}\label{sec:behavior at 0}

In this section we analyze the behavior of $\gamma$ at its base, proving
Theorem \ref{t: main theorem 2} and Theorem \ref{t:series at 0}.

\subsection{Smoothness of $\g(s^2)$ at $s=0$}

We may extend $\lbda$ smoothly on $(-\delta, T)$ by the concatenation property of the Loewner equation.
Thus, it suffices to show that for fixed $t_0 \in (0, T)$, the curve $\g_0(s^2) = g_{t_0} (\g(s^2 + t_0))$ is smooth at $s=0$ provided $\g$ is smooth on $(0,T)$. 
The idea, illustrated in Figure \ref{at0}, is as follows. Let $U$ be the intersection of $\mb{H}$ and a small disk centered at $\lbda(0)$ and let $V=g_{t_0}^{-1}(U)$. Define an analytic branch $\phi$ of $\sqrt{z-\g(t_0)}$ in a neighborhood of $\g(t_0)$ such that the branch cut is $\g(0,t_0]$. Let $W=\phi(V)$. All we need to check is that for small $\e>0$ the images under $\phi$ of $\g((t_0-\e,t_0])$ and $\g(t_0+s^2), 0\leq s^2\leq \e$, are smooth. Finally the smoothness of $\g_0(s^2)$ follows immediately from the Schwarz reflection principle through $E=\phi(\g((t_0-\e,t]))$ (in the case $\g$ is analytic) 
or Kellogg-Warschawski  theorem (in the case $\g$ is $C^{n, \alpha}$) for the map $\phi\circ g_{t_0}^{-1}$ from $U$ to $W$.

\begin{figure}[h]

\centering
\includegraphics[width=5in]{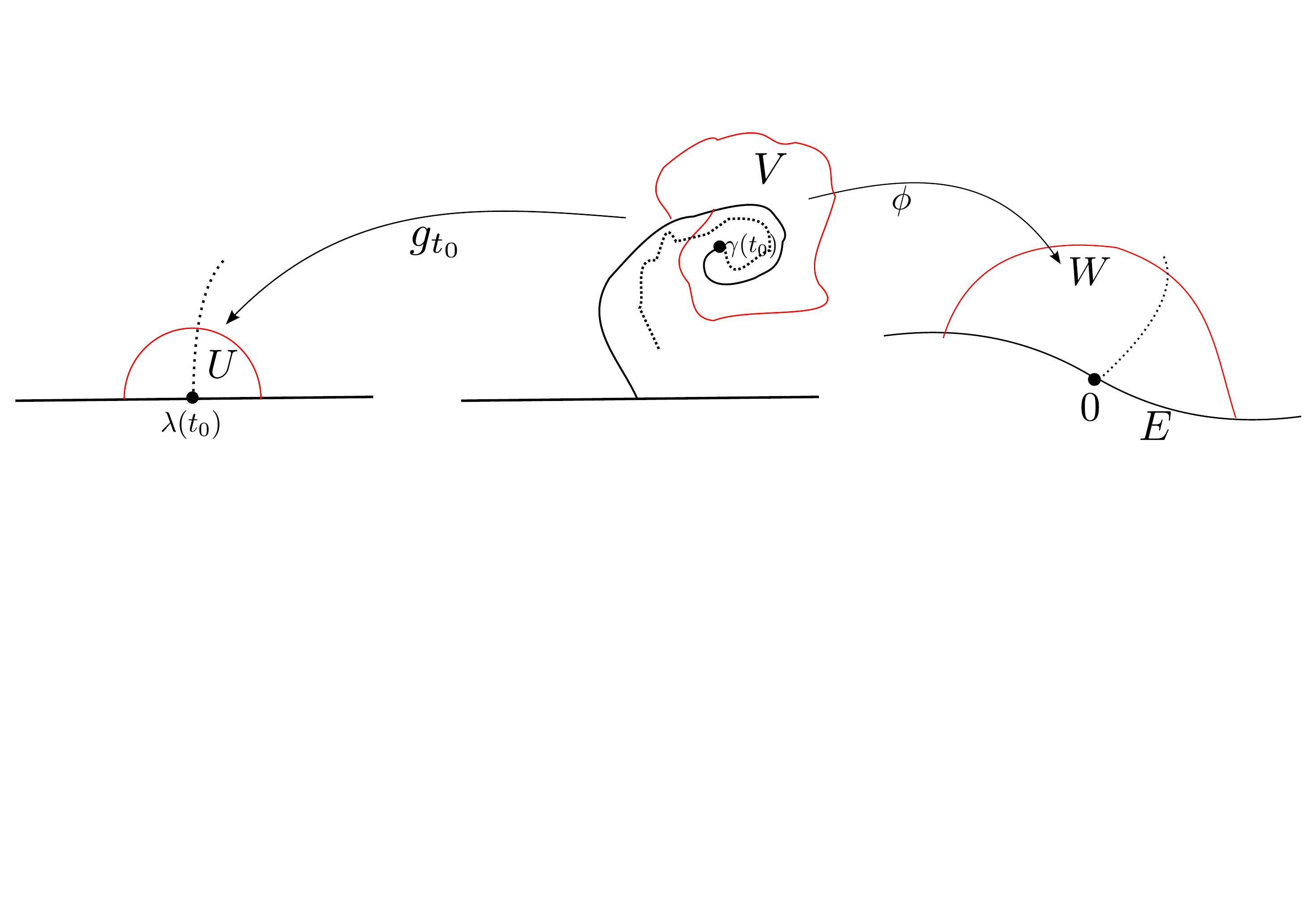}
\vspace{-1.5in}
\caption{ Illustration for the proof of Theorem \ref{t: main theorem 2}}\label{at0}

\end{figure}

\begin{proof}[Proof of Theorem \ref{t: main theorem 2} when $\l$ is analytic]
It follows from (\ref{e: gamma''}) that $\gamma'(t) \neq 0$ for all $t$. Thus, there exists an (real) analytic function $h$ on $(-\sqrt{\e},\sqrt{\e})$ such that
$$\frac{\g(t_0+s) - \g(t_0)}{s} = h(s)^2 \mbox{ for all } s\in (-\sqrt{\e},\sqrt{\e}) \backslash \{0\}. $$

Let $\phi_1(s) = is h(-s^2)$ and $\phi_2(s) = sh(s^2)$. We see that these two functions are analytic and one-to-one. Moreover,
$$\phi_1(s)^2 = \g(t_0 - s^2) - \g(t_0) \mbox{ and }$$
$$\phi_2(s)^2 = \g( t_0 + s^2) - \g(t_0).$$ 
Therefore the boundary $E$ of $W$, which is parametrized by $\phi_1(s)$ near 0, and $\phi(\g(t_0 + s^2))$ are analytic. 
Since the latter map is the image of $\g_0(s^2)$ under $\phi \circ g_{t_0}^{-1}$, it follows from the Schwarz reflection principle that $\g_0(s^2)$ is analytic at $0$.
\end{proof}

\begin{proof}[Proof of Theorem \ref{t: main theorem 2} when $\l$ is $C^{\beta}$]
By Theorem \ref{t: main theorem 2}, $\g \in C^{n,\a}(0,T]$ for appropriate $\a \in (0, 1)$.
 It is not obvious that the function $h$ in the previous case is $C^{n,\a}$. 
Indeed one can find an example of function $\g \in C^{n,\a}$ but $h$ is not.  Now let
$$H(s) = \frac{\g(t_0 + s) - \g(t_0)}{s} \mbox{ for } s \in (-\sqrt{\e}, \sqrt{\e})\backslash\{0\}, \mbox{ and } H(0)=\g'(t_0).$$
We claim that $H\in C^{n-1,\a} (-\sqrt{\e},\sqrt{\e})$. Indeed
$$H^{(n)}(s) = \frac{n!}{s^{n+1}} \sum^{n}_{k=0} \frac{(-1)^k}{k!} s^k \g^{(k)}( t_0+s)
- \frac{(-1)^n n!}{s^{n+1}} \g(t_0)
 \mbox{ for } s\neq 0.$$
Apply Proposition \ref{Cn_alpha} for functions $\g,\g',\cdots, \g^{(n)}$ to get $|H^{(n)}(s)| \leq cs^{\a - 1}$ which implies the claim.

Since $\inf_{s\in (-\sqrt{\e},\sqrt{\e})} |H(s)|>0$, it follows from the claim that the function $s \mapsto \sqrt{H(-s^2)} $ is $C^{n-1,\a}(-\sqrt{\e},\sqrt{\e})$
 for any well-defined square-root function.
Let $\phi_1(s)$ be a parametrization near 0 of $E$ such that $\phi_1(s)^2 =\g(t_0 - s^2) - \g(t_0)$ and $\phi_1(s) = s\sqrt{H(-s^2)}$ for $s \in (-\sqrt{\e}, \sqrt{\e})$. 
Since $\phi_1'(s) = \dfrac{\g'(t_0 - s^2)}{\sqrt{H(-s^2)}}$, the function $\phi_1$ is $C^{n,\a}(-\sqrt{\e},\sqrt{\e})$. 
The same argument shows that the function $\phi(\g(t_0 + s^2))$ is $C^{n,\a}[0,\sqrt\e)$. 
Combined with the last two statements,
the  Kellogg-Warschawski theorem \cite[Theorem  3.6]{Pommerenke} 
implies that the function $\g_0(s^2)$ is $C^{n,\a}[0,\sqrt\e)$.
\end{proof}

{\bf Remark.} The proof also shows that  if $\lbda\in C^{n,\alpha}([0,T];M)$ then $\G\in C^{n,\alpha+1/2}([0,T]; c)$ 
with $c=c(T, M, n, \alpha)$.

\subsection{Expansion of $\g$ at $s=0$}\label{sec: comparison curves}

The goal of this section is to prove Theorem \ref{t:series at 0}, which illuminates why the $s^2$ parametrization is a natural parametrization at the base of a Loewner curve $\g$.
To accomplish this, we create a comparison curve $\tilde \g$ that closely approximates $\g$ near its base and is ``nice" at $s=0$ (that is, $\tilde \G(s) =\tilde \g(s^2)$ is smooth at $t=0$.)  
The properties of the comparison curve are summarized in Proposition \ref{prop} below.

Assume   $\g$ is generated by $\l \in C^{n,\a}[0,T]$.  We define $\tilde{\g}$ as a perturbation of a vertical slit, as done in Section 4.6 of \cite{L}.
Set 
$$\phi(z) 
 = z + \sum_{m=2}^{4n+1} \frac{b_m}{2^m} z^m,$$
 which is conformal on a neighborhood of the origin.
The real-valued coefficients $b_m$ will depend on $\l^{(k)}(0)$ as we will describe later.
Then define
\begin{align*}
\tilde{\g}(t) = \phi(2i\sqrt{t}) 
                        &=   2i\sqrt{t} + \sum_{m=2}^{4n+1} i^{m} b_m t^{m/2} \\
                        &=  2i\sqrt{t} - b_2 t - i \,b_3 t^{3/2} + b_4 t^2 + \cdots + i \,b_{4n+1} t^{2n+1/2}.
\end{align*}
Let $g_t: \H \setminus [0,2i\sqrt{t}] \to \H$ and $\tilde{g}_t:\H \setminus \tilde\g[0,t] \to \H$ be conformal maps with the hydrodynamic normalization at infinity.  Then we set $\phi_t = \tilde{g}_t \circ \phi \circ g_t^{-1}$
and $\tilde{\l}(t) = \phi_t(0)$, as illustrated in Figure \ref{comparison curve figure}.
In this form, $\tilde{\g}$ and $\tilde{\l}$  are not parametrized by halfplane capacity.  We will need to reparametrize by $t=t(s)$, which satisfies $t(0)=0$ and $\frac{dt}{ds} = \phi_t'(0)^{-2}$.  
Note in particular that $\frac{dt}{ds}\big|_{s=0} = 1$.


\begin{figure}
\begin{tikzpicture}

\draw[thick] (0,0) -- (0,2);
\draw[fill] (0,2) circle [radius=0.05];
\draw[thick,dashed] (0,2) -- (0,3.2);
\node[right] at (0,2) {$2i\sqrt t$};
\draw (-3,0) -- (3,0);

\draw[->] (3,2) to [out=45,in=135] (5,2);
\node[above] at (4,2.5) {$\phi$};
\draw[->] (3,-3) to [out=45,in=135] (5,-3);
\node[above] at (4,-2.5) {$\phi_t$};

\draw[->] (-3,-1) to [out=225,in=135] (-3,-3);
\node[left] at (-3.5,-2) {$g_t$};
\draw[->] (11,-1) to [out=315,in=45] (11,-3);
\node[right] at (11.5,-2) {$\tilde g_t$};

\draw[thick] (8,0) to [out=90, in=270] (8.5,1) to [out=90,in=290] (7.8,1.9);
\draw[fill] (7.8,1.9) circle [radius=0.05];
\draw[dashed] (7.8,1.9) to [out=100,in=250] (8.1,3);
\node[right] at (7.8,1.9) {$\tilde\gamma(t)$};
\draw (5,0) -- (11,0);

\draw [dashed] (0,-5) -- (0,-3.8);
\draw[very thick] (-2,-5) -- (2,-5);
\draw[fill] (0,-5) circle [radius=0.05];
\node[below] at (0,-5) {$0$};
\draw (-3,-5) -- (3,-5);

\draw [dashed] (8.2,-5) to [out=90,in=210] (9,-4.2);
\draw [very thick] (7.5,-5) -- (10.5,-5);
\draw[fill] (8.2,-5) circle [radius=0.05];
\node[below] at (8.2,-5) {$\tilde\lambda(t)$};
\draw (5,-5) -- (11,-5);

\end{tikzpicture}
\caption{The conformal maps $\phi, g_t, \tilde g_t, \phi_t,$ the comparison curve $\tilde \g$, and  $\tilde \l$. 
}\label{comparison curve figure}
\end{figure}


\begin{lemma} \label{polylem}
Assume $\phi_t$, $\tilde \l$ and $t=t(s)$ are defined as above, and let $k \in \mathbb{N}$.
Then there exists $\tilde T >0$, there exist polynomials
	 $p_k(x_1, x_2, \cdots, x_{k+2}  ),$ 
	 $ q_k(x_1, x_2, \cdots, x_{2k}  )$ 
	and $r_k(x_1, x_2, \cdots, x_{2k-1}  )$, 
 and there exist nonzero constants $c_k, d_k, e_k$  so that for $t \in [0, \tilde T],$
\begin{align}  
 \label{polylem1}
  & \partial_t \phi^{(k)}_t(0) = c_k \,\phi^{(k+2)}_t(0)+
                   p_k\left( \phi'_t(0), \phi''_t(0), \cdots, \phi^{(k+1)}_t(0), \phi_t'(0)^{-1} \right), \\
 \label{polylem2}                  
  & \partial_s^{k} \tilde \l(t)   =  d_k \, \phi^{(2k)}_t(0) \cdot  \phi_t'(0)^{-2k} +
		q_k\left( \phi'_t(0), \phi''_t(0), \cdots, \phi^{(2k-1)}_t(0), \phi_t'(0)^{-1} \right), \text{ and}\\
 \label{polylem3}
  &\partial_s^{k} t = e_k \, \phi_t^{(2k-1)}(0) \cdot \phi_t'(0)^{-(2k+1)} + r_k\left( \phi'_t(0), \phi''_t(0), \cdots, \phi^{(2k-2)}_t(0),  \phi_t'(0)^{-1} \right).
\end{align}
Further $\tilde \l \in C^{\infty}[0, s(\tilde T)]$ under the halfplane-capacity parametrization.
\end{lemma}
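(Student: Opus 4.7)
The plan is to derive a Loewner-type evolution equation for $\phi_t$, extract the three formulas by Taylor-expanding at $w=0$ and inducting on $k$, and then handle the smoothness statement separately.

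Starting from $\phi_t = \tilde g_t \circ \phi \circ g_t^{-1}$ and using the Loewner PDEs $\partial_t g_t = 2/g_t$ (driving $\equiv 0$) and $\partial_t \tilde g_t = \tilde a'(t)/(\tilde g_t - \tilde\l(t))$ with $\tilde a(t)$ the halfplane capacity of $\tilde\g[0,t]$, a chain-rule computation with $w$ held fixed gives
\begin{equation}\label{plan:PDE}
\partial_t \phi_t(w) \;=\; \frac{\tilde a'(t)}{\phi_t(w) - \tilde\l(t)} \;-\; \frac{2\phi_t'(w)}{w}.
\end{equation}
Demanding that the right-hand side have no $w^{-1}$ pole at $w=0$ forces $\tilde a'(t) = 2\phi_t'(0)^2$, matching the stated reparametrization $dt/ds = \phi_t'(0)^{-2}$. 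Writing $A_k(t) = \phi_t^{(k)}(0)/k!$ and expanding both sides of \eqref{plan:PDE} as Taylor series in $w$ then yields $\tilde\l'(t) = -3\phi_t''(0)$ from the $w^0$ coefficient and, for $k \geq 1$,
\[
\partial_t \phi_t^{(k)}(0) \;=\; -\frac{2(k+3)}{(k+1)(k+2)}\,\phi_t^{(k+2)}(0) \;+\; p_k\bigl(\phi_t'(0),\ldots,\phi_t^{(k+1)}(0),\phi_t'(0)^{-1}\bigr)
\]
from the $w^k$ coefficient, where $p_k$ collects the nonlinearities produced by the geometric-series expansion of $2\phi_t'(0)^2/(\phi_t(w)-\tilde\l(t))$. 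This is precisely \eqref{polylem1} with $c_k = -2(k+3)/((k+1)(k+2)) \neq 0$.

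Formulas \eqref{polylem2} and \eqref{polylem3} then follow by induction on $k$ via the chain rule $\partial_s = \phi_t'(0)^{-2}\partial_t$, starting from the base cases $\partial_s\tilde\l = -3\phi_t''(0)\phi_t'(0)^{-2}$ and $\partial_s t = \phi_t'(0)^{-2}$. At each step, any $\partial_t\phi_t^{(j)}(0)$ produced by the outer $\partial_t$ is rewritten using \eqref{polylem1}; tracking the top-order term through the recursion shows that each $d_{k+1}$ is a nonzero multiple of $d_k$ (coming from $c_{2k}$), and likewise $e_{k+1}$ is a nonzero multiple of $e_k$ (with one extra contribution in the $e_1 \to e_2$ step from differentiating the $\phi_t'(0)^{-3}$ factor), while all remainder terms remain polynomial in the arguments advertised by the lemma.

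For the smoothness statement, I plan to view \eqref{plan:PDE} together with the closing relations $\tilde\l(t)=\phi_t(0)$ and $\tilde a'(t)=2\phi_t'(0)^2$ as an ODE for the germ of $\phi_t$ at $0$ in a suitable Banach space of functions analytic on a fixed neighborhood of $0$ in $\H$. Since the right-hand side is analytic in $\phi_t$ and the initial datum $\phi$ is a polynomial, standard ODE theory yields smoothness of $t\mapsto\phi_t^{(k)}(0)$ on $[0,\tilde T]$ for every $k$. Because $\phi_0'(0)=1\neq 0$, the relation $dt/ds=\phi_t'(0)^{-2}$ then makes $t(s)$ a smooth function of $s$ with $t(0)=0$, and substituting into \eqref{polylem2} expresses each $\partial_s^k\tilde\l$ as a smooth function of $s$, giving $\tilde\l\in C^\infty[0,s(\tilde T)]$. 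The main obstacle is exactly this last step: although $g_t^{-1}(w) = \sqrt{w^2-4t}$ carries explicit $\sqrt t$ behavior at its tip, one must verify that it is absorbed in the composition $\tilde g_t \circ \phi \circ g_t^{-1}$, so that $\phi_t^{(k)}(0)$ depends smoothly, and not merely continuously, on $t$ up to $t=0$.
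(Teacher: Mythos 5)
Your proposal is correct and takes essentially the same route as the paper: the evolution equation you derive for $\phi_t$ is exactly the one the paper quotes from \cite[Proposition 4.40]{L} (displayed as \eqref{uglysums}), \eqref{polylem1} is read off from its Taylor expansion with the same constant $c_k=-\frac{2(k+3)}{(k+1)(k+2)}$, and \eqref{polylem2}--\eqref{polylem3} follow by the identical chain-rule inductions. Two small remarks: your extra top-order term in the $e_1\to e_2$ step (from differentiating the $\phi_t'(0)^{-3}$ factor) is a genuine refinement of the paper's displayed recursion and still gives $e_2=-2c_1\neq 0$; and the smoothness of $t\mapsto\phi_t^{(k)}(0)$ down to $t=0$, which you flag as the remaining obstacle, is precisely what the citation of \cite[Proposition 4.40]{L} supplies, after which the paper obtains $\tilde\l\in C^\infty$ directly from \eqref{polylem2} without the Banach-space ODE apparatus.
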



\begin{proof}

Write $ \phi_t(z) = \sum_{k=0}^{\infty} a_k z^k$, keeping in mind that $a_k$ depends on $t$.
Then from Proposition 4.40 in \cite{L},
\begin{align}
\nonumber
\partial_t \phi_t(z) &= 2 \left( \frac{ \phi_t'(0)^2}{\phi_t(z) - \phi_t(0)} - \frac{ \phi_t'(z)}{z} \right)\\ \label{uglysums}
	&= -2\, \frac{ \sum_{k=0}^{\infty} (a_1a_{k+2}+2 a_2 a_{k+1}+ \cdots + (k+2) a_{k+2} a_1)z^{k}}
			{\sum_{k=0}^{\infty} a_{k+1}z^k}.
\end{align}
Since $a_1=1$ when $t=0$, there exists a neighborhood $U$ of 0 and $\tilde T>0$ so that the denominator is nonzero for $z \in U$  and $t \leq \tilde T$. 
 Therefore $ \partial_t \phi^{(k)}_t(z)$  is defined for $(z,t) \in U \times [0,\tilde T]$. 
 Equation \eqref{polylem1} follows from  \eqref{uglysums} (with $c_k = -\frac{2(k+3)}{(k+2)(k+1)}$.)

 We verify \eqref{polylem2} inductively.  For the base case,
 $$\partial_s \tilde \l(t) = \partial_t \phi_t(0) \cdot \frac{dt}{ds} = -3\, \phi_t''(0) \cdot \phi_t'(0)^{-2}.$$
 Assume  \eqref{polylem2} holds for a fixed $k$.  Then
 \begin{align*}
  \partial_s^{k+1} \tilde \l(t) &= \partial_t \left(  d_k \, \phi^{(2k)}_t(0) \cdot  \phi_t'(0)^{-2k} +
		q_k\left( \phi'_t(0), \phi''_t(0), \cdots, \phi^{(2k-1)}_t(0), \phi_t'(0)^{-1} \right) \right)\cdot  \phi_t'(0)^{-2} \\
	& = d_k \,c_{2k} \, \phi_t^{(2k+2)}(0) \cdot \phi_t'(0)^{-2k-2} + q_{k+1}\left( \phi'_t(0), \phi''_t(0), \cdots, \phi^{(2k+1)}_t(0), \phi_t'(0)^{-1} \right).
 \end{align*}
 
 We also prove \eqref{polylem3} inductively. When $k=1$,
 $$ \frac{dt}{ds} = \phi_t'(0) \cdot \phi_t'(0)^{-3}.$$
 If \eqref{polylem3} holds for fixed $k$, then
 \begin{align*}
 \partial_s^{k+1} t &= \frac{d}{dt} \left( e_k \, \phi_t^{(2k-1)}(0) \cdot \phi_t'(0)^{-(2k+1)} + r_k\left( \phi'_t(0), 
 	\phi''_t(0), \cdots, \phi^{(2k-2)}_t(0),  \phi_t'(0)^{-1} \right) \right)\cdot \phi_t'(0)^{-2} \\
    &= e_k	\, c_{2k-1} \, \phi_t^{(2k+1)}(0) \cdot \phi_t'(0)^{-(2k+3)} + r_{k+1} \left( \phi'_t(0), \phi''_t(0), \cdots, \phi^{(2k)}_t(0),  \phi_t'(0)^{-1} \right).
\end{align*}

The last assertion follows from \eqref{polylem2}. 
\end{proof}


We are now ready to recursively define the coefficients of $\phi$.  
The coefficient $b_m$ will depend on $\l^{(k)}(0)$ for $k=1, \cdots, \lfloor \frac{m}{2} \rfloor \wedge n$. 
For even values of $m$, our choice of $b_m$ will ensure that $\partial_s^{k} \tilde{\l}(0) = \l^{(k)}(0)$ for $k \leq n$.
For odd values of $m$, we choose $b_m$ so that the $t$-parametrization of $\tilde \g$ is close to the halfplane-capacity parametrization.
\begin{itemize}
\item Set $b_2 = -\frac{2}{3}\l'(0)$.  
	Since $\partial_s \tilde{\l}(0)= -\frac{3}{2} b_2$,  this implies that $ \partial_s \tilde \l(0)= \l'(0)$.
	\smallskip
\item Set $\displaystyle b_3 = \frac{b_2^2}{8}$.
	This implies that  $ \frac{d^{2}t}{d s^{2}} \big|_{s=0}= 2 b_3 - b_2^2/4=0$. 
\smallskip
\item  Assume that  $b_2, b_3, \cdots, b_{2k-1}$ have been defined.  
Then by Lemma \ref{polylem}, 
    $$ \partial_s^{k} \tilde \l(0)   =  d_k \, \frac{(2k)!}{2^{2k}}\, b_{2k}  +
		q_k\left( 1,  \frac{1}{2}b_2, \cdots, \frac{ (2k-1)!}{2^{2k-1} }b_{2k-1}, 1 \right) .$$
If $k \leq n$, define $b_{2k}$ so that $\partial_s^{k} \tilde{\l}(0) = \l^{(k)}(0)$.  If $k>n$, we may define $b_{2k}$ however we like; for instance, we choose $b_{2k}$  so that $\partial_s^{k} \tilde{\l}(0) =0$.
\item  Assume that  $b_2, b_3, \cdots, b_{2k}$ have been defined.  
\smallskip
Then  by Lemma \ref{polylem},
  $$ \frac{d^{k+1}t}{d s^{k+1}}\bigg|_{s=0} = e_{k+1} \, \frac{(2k+1)!}{2^{2k+1}} \,  b_{2k+1} 
  		+ r_{k+1} \left( 1,  \frac{1}{2}b_2, \cdots, \frac{ (2k)!}{2^{2k} }b_{2k}, 1 \right).$$
  Define $b_{2k+1}$ so that this quantity is zero.
\end{itemize}

This construction ensures that 
$\partial_s^{k} \tilde{\l}(0) = \l^{(k)}(0)$ for $k \leq n$
and that
 $t = s + O(s^{2n+2})$.  
 The first fact, together with by Theorem 3.3 in \cite{C}, implies that 
$ |\g(s) - \tilde \g(t(s))| = O(s^{n+\a})$ for $s$ near 0. 
The second fact implies that under the halfplane-capacity parametrization $\tilde \g(t(s))$ will have the same coefficients as $\tilde \g(t)$ for the terms with exponents at most  $n+1/2$.
Together, this provides precise information about the  expansion of $\g(s)$ near $s=0$.
In summary, we have proved the following, which establishes Theorem \ref{t:series at 0}.


\begin{proposition} \label{prop}
Assume that $\l \in C^{n,\alpha} [0,T]$ generates the curve $\g$.
Then there exists $\tilde \l \in C^{\infty} [0,S] $ that generates a (halfplane-capacity-parametrized) curve $\tilde \g \in C^{\infty}(0, S]$ with the following properties:
\begin{itemize}
\item $\l^{(k)}(0) = \tilde \l^{(k)}(0)$ for $1 \leq k \leq n$. 
\smallskip
\item $\tilde \G(s) = \tilde \g(s^2)$ is  in $C^{\infty}[0,\sqrt{S}]$.
\smallskip
\item $\tilde \G^{(m)}(0)$ depends on $\l^{(k)}(0)$ for $m \leq 2n+1$ and $k=1, \cdots, \lfloor \frac{m}{2} \rfloor$.
\smallskip
\item $|\g(s) - \tilde \g(s)| = O(s^{n+\alpha}).$
\end{itemize}
In particular near $s=0$, the curve $\g$ has the  form
\begin{equation*}
\g(s) =  
\begin{cases}
 2i\sqrt{s} + a_2 s + i \,a_3 s^{3/2} + a_4 s^2 + \cdots + a_{2n} s^{n} + O(s^{n+\a})
	&\mbox{if   } \a \leq 1/2 \\ 
 2i\sqrt{s} + a_2 s + i \,a_3 s^{3/2} + a_4 s^2 + \cdots +  a_{2n} s^{n} +i \,a_{2n+1} s^{n+1/2}   + O(s^{n+\a})
 &\mbox{if   } \a > 1/2 \\ 
 \end{cases}
\end{equation*}
where the real-valued coefficients $a_m$ depend on 
$\l^{(k)}(0)$ for $k=1, \cdots, \lfloor \frac{m}{2} \rfloor$.
\end{proposition}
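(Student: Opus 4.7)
The plan is to verify the four bulleted properties for the curve $\tilde\g$ constructed as the image of a vertical slit under a carefully chosen polynomial $\phi$, and then read off the claimed expansion of $\g$. First I would show that the recursive definition of the coefficients $b_m$ of $\phi$ is well-posed. By Lemma \ref{polylem}, at $t=0$ (equivalently $s=0$) we have $\phi'_t(0) = 1$ and $\phi_t^{(k)}(0) = \frac{k!}{2^k}b_k$, so $\partial_s^k \tilde\l(0) = d_k \frac{(2k)!}{2^{2k}} b_{2k} + q_k(\ldots)$ is affine in $b_{2k}$ with nonzero leading coefficient, and $\frac{d^{k+1}t}{ds^{k+1}}|_{s=0}$ is affine in $b_{2k+1}$ with nonzero leading coefficient. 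Thus I can solve for $b_{2k}$ to enforce $\partial_s^k \tilde\l(0) = \l^{(k)}(0)$ for $1 \leq k \leq n$, and for $b_{2k+1}$ to enforce $\frac{d^{k+1}t}{ds^{k+1}}|_{s=0}=0$ for $1 \leq k \leq 2n$. This gives the first bullet and also the identity $t = s + O(s^{2n+2})$ for the reparametrization.

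Next I would verify the smoothness bullet. Since $\phi$ is a polynomial, $\tilde\G(s) = \phi(2is)$ is itself a polynomial in $s$, hence in $C^{\infty}[0,\sqrt{S}]$. By Lemma \ref{polylem}, $\tilde\l \in C^{\infty}[0,S]$ under the halfplane-capacity parametrization, so the halfplane-capacity reparametrization $t(s)$ is itself smooth in a neighborhood of $0$, and $\tilde\g \in C^{\infty}(0,S]$ follows. The dependence structure in the third bullet is immediate from the recursion: $b_m$ depends only on $\l^{(k)}(0)$ for $k \leq \lfloor m/2 \rfloor$, and $\tilde\G^{(m)}(0)$ is a polynomial combination of $b_2,\ldots,b_m$ (via composition of $\phi$ with $2is$ and the reparametrization $t(s) = s + O(s^{2n+2})$).

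For the key fourth bullet I would invoke Theorem 3.3 of \cite{C}, which estimates the distance between two Loewner curves in terms of the difference of their driving functions. Since the first $n$ derivatives of $\tilde\l$ and $\l$ agree at $0$ and $\l \in C^{n,\alpha}$, Taylor's theorem (Proposition \ref{Cn_alpha}) yields $|\l(s) - \tilde\l(s)| = O(s^{n+\alpha})$, and the cited theorem then converts this into $|\g(s) - \tilde\g(t(s))| = O(s^{n+\alpha})$. Because $t(s) = s + O(s^{2n+2})$ and $\tilde\g$ is smooth with $\tilde\g'(s) = O(s^{-1/2})$, the extra $t \to s$ reparametrization error is negligible, giving $|\g(s) - \tilde\g(s)| = O(s^{n+\alpha})$. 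Expanding the polynomial $\tilde\G(s) = \phi(2is) = 2is + \sum_{m=2}^{4n+1} i^m b_m s^m$, observing that the coefficients of $i\,s^{k+1/2}$ are real while the coefficients of $s^{k}$ are real with alternating signs, and composing with the smooth reparametrization $t = s + O(s^{2n+2})$ produces the stated series form through order $s^{n+\alpha}$ (with the odd-half-power term $i a_{2n+1} s^{n+1/2}$ included precisely when $\alpha > 1/2$, since otherwise it is absorbed into the $O(s^{n+\alpha})$ error).

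The main obstacle is the last step, namely applying \cite[Theorem 3.3]{C} cleanly: one must track that its hypotheses are satisfied uniformly near $s=0$ (where the halfplane-capacity parametrization is degenerate), and one must carefully separate the contributions of the $t$-parametrization of $\tilde\g$ from its $s$-parametrization. The recursive choice of the odd coefficients $b_{2k+1}$ to kill the low-order Taylor coefficients of $t(s) - s$ is what makes this separation harmless up to the claimed order.
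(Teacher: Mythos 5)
Your proposal is correct and follows essentially the same route as the paper: the recursive determination of the even coefficients $b_{2k}$ (to match $\partial_s^k\tilde\l(0)=\l^{(k)}(0)$) and odd coefficients $b_{2k+1}$ (to kill the Taylor coefficients of $t(s)-s$) via Lemma \ref{polylem}, followed by the application of Theorem 3.3 of \cite{C} and the observation that $t=s+O(s^{2n+2})$ makes the reparametrization error negligible. The only cosmetic imprecision is writing $\tilde\G(s)=\phi(2is)$ before composing with the halfplane-capacity reparametrization, which you address explicitly afterwards.
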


We note the equations for the first few coefficients: 
\begin{align*}
a_2 &= \frac{2}{3} \l'(0) \\
a_3 &= -\frac{1}{18} \l'(0)^2 \\
a_4 &= \frac{4}{15} \l''(0) + \frac{1}{135} \l'(0)^3 \\
a_5 & = -\frac{1}{15} \l''(0) \l'(0) + \frac{1}{2160} \l'(0)^4 
\end{align*}
Coefficients $a_2, a_3, a_4$ were discovered in \cite{LR2} by comparison with specific example curves (such as those generated by $c\sqrt{\tau-t}$.)

Along with the tools developed in Sections \ref{Properties} and \ref{smoothness section}, Proposition \ref{prop} could be used to show that if $\G(s) = \g(s^2)$, then
$\G^{(k)}(0)$ exists and  equals $\tilde \G^{(k)}(0)$ for $k=1, \cdots, n+1$.

 
 \section{Examples} \label{sec:ex}
  
In this section we discuss two examples that illustrate the two special cases of Theorem \ref{t: quantitative}.  
The first special case is when the driving function is $C^{n+1/2}$.  Here the conclusion is weaker than we might initially expect: it is not necessarily true that $\g \in C^{n+1}$, but rather  $\g$ is in the larger space $\Lambda^n_*$ (which contains both $C^{n+1}$ and $C^{n,1}$.)
This case is illustrated in the first example where the driving function is $C^{3/2}$ and the associated curve is $C^{1,1}$ but not $C^{2}$.  
The second special case of   Theorem \ref{t: quantitative} is when the driving function is $ C^{n,1}$.  Here the conclusion is slightly stronger than might be initially expected: $\g \in C^{n+1, 1/2}.$
This is illustrated in the second example, where
 the driving function is $C^{0,1}$ but not $C^1$ and the associated curve is $C^{3/2}$.
 We describe the needed computational steps to verify these examples, but leave details for the reader.

\subsection{Example 1: $\lambda \in C^{3/2}$ and $\gamma \in C^{1,1}\backslash C^2$}

This example was communicated to us from Don Marshall.  

We will create $\gamma$ via a sequence of conformal maps, as pictured in Figure \ref{fig:rect-line}.
Let $f_1(z) = z+\frac{1}{z} + c\ln z$, and let $r_{1,2} = \frac{-c\pm \sqrt{c^2 + 4}}{2}$ be the finite critical points of $f_1$.
Define 
$$g(z)=\frac{c\pi}{f_1(z)-f_1(r_1)},$$
which is a conformal map from $\mb{H}$ onto the $C^{1,1}$ domain $\mb{C}\backslash((-\infty,0]\cup \text{ a circle arc})$.
Finally, set 
$$F(z) = i\sqrt{g(z)+1}.$$
The image of $\mb{H}$ under $F$ is a slit half-plane, and we let $\gamma$ be the resulting slit.

\begin{figure}[h!]
        		
            		\includegraphics[scale=0.3]{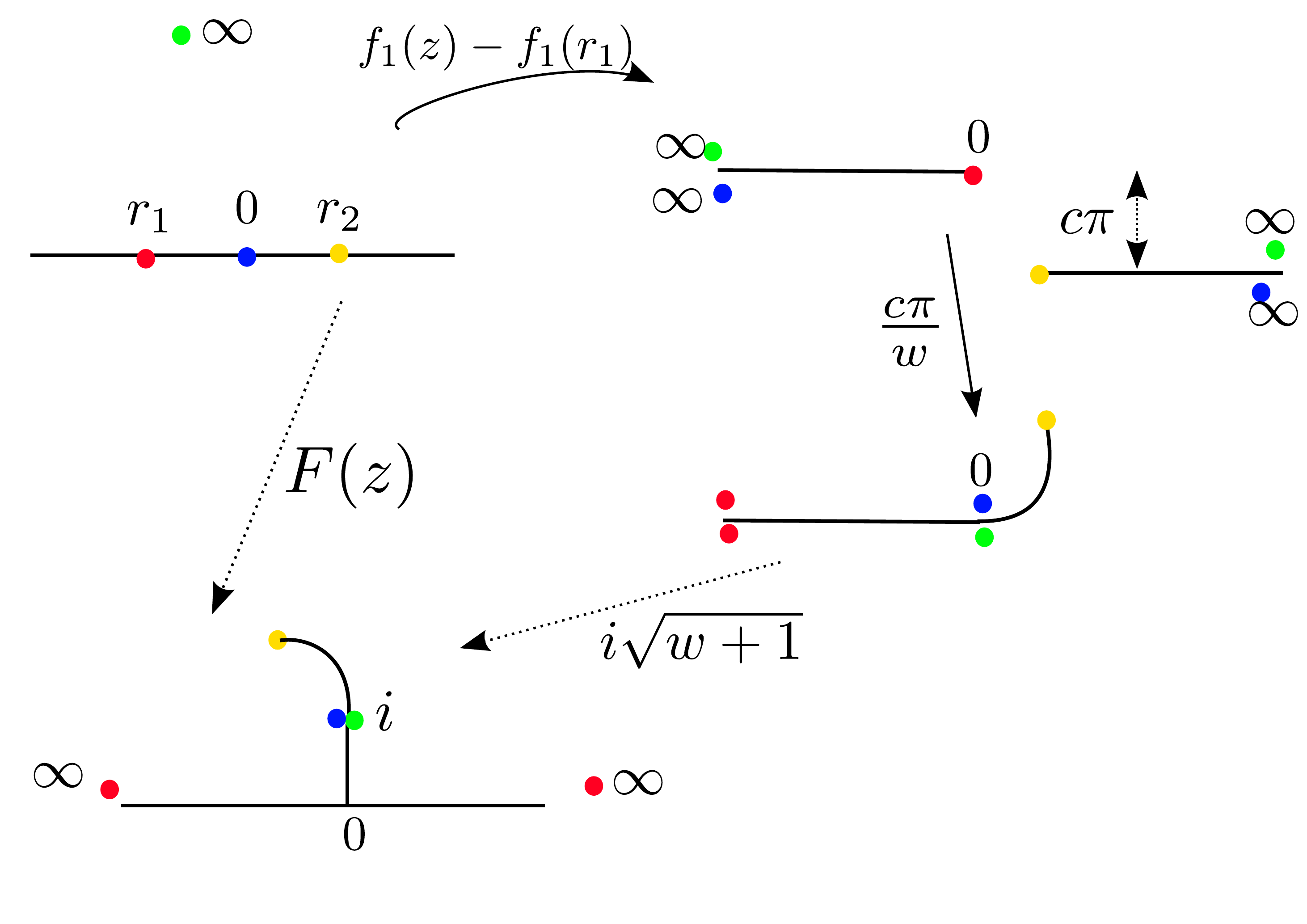}
        		\caption{Conformal maps used in the construction of $\g$ for Example 1.}\label{fig:rect-line}
 	\end{figure}

For $t \in [0,1/4]$, $\gamma(t) = 2i\sqrt{t}$ and $\lambda(t) \equiv 0$. 
To compute $\l$ and $\g$ for $t >1/4$, we will need to use the conformal maps,
 since $\g(t) = F(r_2)$ and $\l(t) = L^{-1}(r_2)$ for the automorphism $L$ of $\mb{H}$ 
with
\begin{equation}\label{normalization}
F(L(z))=z+ 0 + \frac{-2t}{z} + \cdots \; \text{ near infinity}.
\end{equation}

Since $L$ must send $\infty$ to $r_1$, 
\begin{equation*}
L(z) = r_1 + \frac{a}{z-b}
= r_1+\frac{a}{z} + \frac{ab}{z^2} +\frac{ab^2}{z^3}+\frac{ab^3}{z^4} + O(|z|^{-5})\text{ near infinity},
\end{equation*}
where $a<0$ and $b\in\mb{R}$.  
Using this and the Taylor series expansion of $f_1 - f_1(r_1)$ at $z=r_1$, one can compute that
$$ f_1(L(z))-f_1(r_1) = \frac{A}{z^2}+\frac{B}{z^3}+\frac{D}{z^4} + O(1/|z|^5) \text{ near infinity},$$
with $$A= \frac{a^2f_1^{(2)}(r_1)}{2},  \;\;  B= a^2b f^{(2)}(r_1)+ \frac{a^3 f^{(3)}(r_1)}{6},$$
  $$ \text{ and } D = \frac{3a^2b^2 f^{(2)}(r_1)}{2} + \frac{a^3 b f^{(3)}(r_1)}{2} + \frac{a^4 f^{(4)}(r_1)}{24}.$$
Thus near infinity,
\begin{align*}
 F(L(z)) 
  &= i\sqrt{\frac{c\pi}{A}z^2 -\frac{c\pi B}{A^2} z -\frac{c\pi D}{A^2} + \frac{c\pi B^2}{A^3} + 1 + O(1/|z|)} \\
  &= i\left(-i\sqrt{\frac{c\pi}{|A|}} z -i B \frac{\sqrt{c\pi}}{2|A|^{3/2}}+ O(1/|z|)\right)
 \end{align*}
Note that in choosing the appropriate branch for the square root, we used the fact that $A <0$.
In order to satisfy \eqref{normalization}, we must have
\begin{itemize}
\item $ \displaystyle A = -c\pi, \,$  or equivalently,  $\displaystyle a =\frac{r_1\sqrt{-2\pi cr_1}}{\sqrt{2-cr_1}}$, and
\item $\displaystyle B=0, \, $ or equivalently,  $\displaystyle b = \frac{(cr_1-3)\sqrt{-2\pi c r_1}}{3(2-cr_1)^{3/2}}.$
\end{itemize}
Using these two facts, we expand further and find that   at infinity,
$$F(L(z)) = z+0 - \frac{1}{2}\left(\frac{D}{A}+1\right) \frac{1}{z}+ O(1/|z|^2),$$
which implies that
$$4t = \frac{D}{A}+1= \frac{-\pi c r_1(c^2r_1^2-6c r_1+6)}{3(2-cr_1)^3} +1.$$

Next we compute $\l(t)$ for $t>1/4$:
$$\l(t) = L^{-1}(r_2) = b+\frac{a}{r_2-r_1}=\frac{-2\sqrt{2\pi}(-cr_1)^{3/2}}{3(2-cr_1)^{3/2}}.$$
Thus with $y=-cr_1$, we have 
$$t= \frac{1}{4} +\frac{\pi y(y^2+6y+6)}{12(2+y)^{3}}
     \;  \text{  and  } \; \l(t) = \frac{-2\sqrt{2\pi}y^{3/2}}{3(2+y)^{3/2}}.$$
So for $t>1/4$, 
$$\l'(t) = \frac{\frac{d\l}{dy} }{ \frac{dt}{dy}} = \frac{-2\sqrt{2}\sqrt{y}(2+y)^{3/2}}{\sqrt{\pi}(y+1)}.$$
Using this, one can show that for $s> t \geq 1/4$,
$$|\l'(s)-\l'(t)| \leq c \sqrt{y_s-y_t} \leq c' \sqrt{s-t},$$
proving that $\l \in C^{3/2}[0,T]$.  We also note that away from $t=1/4$, one can check that $\l(t)$ is $C^2$.

Lastly, for $t \geq 1/4$, $\g(t) = F(r_2)$.  Using this, one can determine computationally that with the halfplane-capacity parametrization, $\g'$ and $\g''$ exist on $[1/4, T]$ (by computing, for instance, $\g'(t) = \frac{dF(r_2)}{dc}/\frac{dt}{dc}$ and  $\g''= \frac{d\g'(t)}{dc}/\frac{dt}{dc}$).  Further,
$$ \lim_{t \searrow 1/4} \g'(t) = 2i =\lim_{t \nearrow 1/4} \g'(t), $$
but 
$$  \lim_{t \searrow 1/4} \g''(t)=-4i-16 \neq \lim_{t \nearrow 1/4} \g''(t) = -4i.$$
Therefore on the full interval $(0,T]$, $\g$ is $C^{1, 1}$ but not $C^2$.

\subsection{Example 2: $\lambda \in C^{0,1}$ and $\gamma \in C^{3/2}$}

Consider the driving function
\begin{equation*}
	\lbda(t) = \left\{ 
	\begin{array}{ccr}
	0 & \mbox{ for } & 0\leq t \leq \frac{1}{4}\\
	\frac{3}{2}-\frac{3}{2}\sqrt{1-8(t-1/4)} & \mbox{ for } & \frac{1}{4} \leq t < \frac{1}{4} + \frac{1}{10}\\
	\end{array}
	\right.  .
\end{equation*}
There exists $c>0$ so that 
$$|\lbda(t)-\lbda(s)|\leq c |t-s|$$
for all $s, t \in [0, 0.35]$, implying that $\l \in C^{0,1}$.
However, $\l$ is not in $C^1$ since $\lbda'$ is not continuous.

The driving function $ \frac{3}{2}-\frac{3}{2}\sqrt{1-8s}$, defined on $[0,\frac{1}{8}]$, generates the upper half-circle of radius $\frac{1}{2}$ centered at $\frac{1}{2}$.  
Let $\hat{\g}$ be the portion of this circle generated on the time interval $[0, \frac{1}{10}]$. 
 Then the curve $\g$ generated by $\l$ is the image of $[-1,1] \cup \hat\g$ by the map $S(z)=\sqrt{z^2-1}$.
See Figure \ref{Example2gamma}.
By Proposition 3.12 in \cite{MR07},
$\g \in C^{3/2}$ (and no better) under the arclength parametrization.  This is also true under the halfplane-capacity parametrization.  Note that  $\hat \g$ is  smooth on $(0, \frac{1}{10}]$ (because its driving function is smooth),
and  near $s=0$
$$\hat \g(s) = 2i\sqrt{s} + 4s -2is^{3/2} + O(s^2)$$
by Theorem \ref{t:series at 0}.
Thus $\g$ is piecewise smooth, and for $t \geq 1/4$
$$\g(t) = S(\hat \g(t-1/4))
    = i +2i(t-1/4)+8(t-1/4)^{3/2}+O((t-1/4)^2).$$
From this we can determine that $\g \in C^{3/2}(0,0.35]$ (and no better) under the halfplane-capacity parametrization.

\begin{figure}
\begin{tikzpicture}

\draw[thick] (0,0) arc[radius=1, start angle=180, end angle=45];
\node[right] at (-0.2,1) {$\hat{\gamma}$};
\draw (-2,0) -- (3,0);

\draw[->] (3,2) to [out=45,in=135] (5,2);
\node[above] at (4,2.5) {S};

\draw[thick] (7,0) to (7,1.8);
\draw[thick] (7,1.8) to [out=90,in=180] (7.25, 2);
\draw[thick] (7.25, 2) to [out=0,in=100] (7.8,1.4);
\node[right] at (7.7,1.9) {$\gamma$};
\draw (5,0) -- (10,0);

\end{tikzpicture}
\caption{The curve $\g$ for Example 2.
}\label{Example2gamma}
\end{figure}

\bibliographystyle{alpha}

\bibliography{LE_SLE}

\end{document}